\documentclass[12pt]{amsart}
\usepackage[utf8]{inputenc}

\usepackage{amsmath,amsthm,amssymb,amsfonts}
\usepackage{lipsum} 
\usepackage{txfonts} 
\usepackage{hyperref}
\usepackage{enumitem}
\hypersetup{
  pdfborder = {0 0 0}, 
  colorlinks = false    
}
\usepackage{pgf,tikz}
\usepackage{cite}
\newcommand{\IR}{{\mathbb R}}

\newcommand {\Rn}{\mathbb{R}^n}

\newtheorem{theorem}{Theorem}[section]
\newtheorem{lemma}[theorem]{Lemma}
\newtheorem{proposition}[theorem]{Proposition}

\theoremstyle{definition}

\numberwithin{equation}{section}
\begin{document}
	\title[Littlewood--Paley operators and  semigroup maximal operators
on CMO spaces]
	{Littlewood--Paley operators and  semigroup maximal operators
on CMO spaces associated to Sch\"odinger operators}
	
	\author[W. Li, Q. Lin and L. Song]{Wanjun Li, Qingze Lin  and  Liang Song*}
	\thanks{*Corresponding author}

\address{
Wanjun Li, School of Mathematics,
		Sun Yat-sen University,
		Guangzhou, 510275,
		P.R.~China.
}
 \email{liwj323@mail2.sysu.edu.cn}
	\address{
		Qingze Lin,
		Department of Mathematics,
		Shantou University,
		Shantou, 515063,
		P.R.~China.
		}
	\email{qingzelin@stu.edu.cn; linqz@alumni.sysu.edu.cn}
	
	\address{
		Liang Song,
		School of Mathematics,
		Sun Yat-sen University,
		Guangzhou, 510275,
		P.R.~China.}
	\email{songl@mail.sysu.edu.cn}

	\subjclass[2010]{42B25; 35J10}
	\keywords{${\rm CMO}$ space, Schr\"odinger operator, Littlewood-–Paley operators, Semigroup maximal operator.}

	\begin{abstract}
		Let $L=-\Delta+V$ be a Schr\"odinger operator on $\mathbb{R}^n$, where $\Delta$ is the Laplacian and $V$ satisfies the reverse Hölder inequality ${\rm RH}_q$ for some $q>n/2$. In this paper, we study the behavior of  the Littlewood-–Paley operators $s_L$ and $S_L$, as well as the semigroup maximal operator $T^*_L$, on the space ${\rm CMO}_L(\Rn)$ associated with the Schr\"odinger operator $L$. It is known from previous work that these operators are  bounded on ${\rm BMO}_L(\Rn)$. Our main result  shows that they are, in fact, mappings from ${\rm CMO}_L(\Rn)$ into itself. To prove this, we develop  several  equivalent characterizations of ${\rm CMO}_L(\Rn)$ and employ a refined decomposition that  partitions the parameter interval at $r_B\rho(x_B)$, instead of the customary  $r_B^2$ or $\rho(x_B)^2$. The new strategy allows us to overcome a key technical obstacle that arises when applying existing methods to the ${\rm CMO}_L$ setting.
	\end{abstract}
	
	\maketitle
	
	\section{Introduction}

		The space  ${\rm BMO}(\mathbb{R}^n)$ of functions of bounded mean oscillation   was introduced  by F. John and L. Nirenberg \cite{JN1961CPAM}. A locally integrable function $f$ is said to belong to ${\rm BMO}(\mathbb{R}^n)$ if
		\[
		\|f\|_{\rm BMO}=\sup\limits_B\frac{1}{|B|}\int_B|f(x)-f_B|\,dx <\infty,
		\]
		where the supremum is taken over all balls $B$ in $\Rn$ and where $f_B$ denotes the mean of $f$ over $B$. The  space ${\rm CMO}(\Rn)$ (continuous mean oscillation space) is an important closed subspace of ${\rm BMO}(\Rn)$, defined as the closure in the ${\rm BMO}$ norm of the space of  compactly supported smooth functions (see \cite{Uchiyama1978TohokuMathJ},\cite[Remark 2.6]{Neri1975Studiamath}), i.e.,  
\[  
{\rm CMO}(\Rn) = \overline{ C_0^\infty(\Rn)}^{\|\cdot\|_{\text{BMO}}}.  
\]  
It is classical that the dual space of ${\rm CMO}(\Rn)$ is the Hardy space $H^1(\Rn)$ (\cite{CW1977BAMS}), while the dual space of $H^1(\Rn)$ is ${\rm BMO}(\Rn)$ (\cite{FS1972ActaMath}).

  Littlewood--Paley operators play a crucial role  in harmonic analysis. Let us recall the definitions of the Littlewood--Paley $g$-function and area function (see \cite{Stein1970PrincetonUniversityPress}). For \( f \in L^p(\Rn)\),  
\[  
g(f)(x) = \left( \int_0^\infty |\nabla u(x, t)|^2 t\,dt \right)^{1/2} \quad {\rm and} \quad S(f)(x) = \left( \iint_{\Gamma(x)} |\nabla u(x - y, t)|^2 t^{1-n}\,dy dt \right)^{1/2}, 
\]  
where \( \Gamma(x)=\{(y, t)\in \Rn\times (0,\infty): |y - x| < t\} \), \( \nabla u(x, t) =(\partial_t u,\nabla_x u)\), and \( u(x, t) \) is the Poisson integral of $f$:  
\[  
u(x, t) = \int_{\mathbb{R}^n} P_t(y) f(x - y) \, dy, \quad \text{with } P_t(y) = c_n \frac{t}{(t^2 + |x - y|^2)^{\frac{n + 1}{2}}}.  
\]  
These two operators $g$ and $S$ are well known to be bounded on \( L^p(\Rn)\) spaces for  $1<p<\infty$.  


 Let us consider the behavior of the Littlewood--Paley operators on the ${\rm BMO}(\Rn)$ space. For \(f\in {\rm BMO}(\Rn)\), S.~Wang \cite{Wang1985book} showed that  either the Littlewood--Paley \( g \)-function satisfies \(g(f)=+\infty\) almost everywhere or $g(f)\in {\rm BMO}(\Rn)$, with $\|g(f)\|_{\rm BMO}\leq C\|f\|_{\rm BMO}$. We note that an earlier result for the Hardy--Littlewood maximal operator ${\mathcal M}$ on ${\rm BMO}(\Rn)$ was obtained in \cite{BDS1981AnnMath}, where C.~Bennett, R.~DeVore, and R.~Sharpley proved that for $f\in {\rm BMO}(\Rn)$, either ${\mathcal M}(f)\equiv +\infty$ or ${\mathcal M}(f)\in {\rm BMO}(\Rn)$, with $\|{\mathcal M}(f)\|_{\rm BMO}\leq C\|f\|_{\rm BMO}$. Later, D.~Kurtz \cite{Kurtz1987ProcAmerMathSoc} proved that an analogous result holds for the Littlewood--Paley area function $S(f)$.  Subsequently, S. Wang and J. Chen \cite{WC1990ChineseAnnMath} weakened these conditions, proving that if \( g(f) \) (or $S(f)$) is finite at a single point, then it is finite almost everywhere, and hence the operator is bounded on ${\rm BMO}(\Rn)$. We refer the reader to \cite{LY1995ApproxTheoryAppl,Sun2004NagoyaMathJ} for further results. A natural question then arises: do the operators $T\in \{{\mathcal M}, g, S\}$  map  ${\rm CMO}(\Rn)$ into itself? Recently,  H. Liu and the last two authors of the present paper \cite{LLS2025TokyoJMAth} answered this question in the negative. They showed that the same dichotomies occur on $\text{CMO}(\mathbb{R}^n)$:  for $f\in \text{CMO}(\mathbb{R}^n)$, either $T(f)\equiv +\infty$ or $T(f)\in \text{CMO}(\mathbb{R}^n)$. Moreover,  they constructed explicit examples where the operators are infinite on \( \text{CMO}(\mathbb{R}^n)\).

 The classical theory of BMO and Hardy spaces has proven remarkably successful over the past decades. Nevertheless, the standard framework fails in certain important settings-notably, in problems involving partial differential equations that require generalizations of the Laplacian. This has motivated the development of BMO space and Hardy spaces associated to a linear operator $L$, analogous to the classical theory adapted to the Laplacian. This line of research has attracted considerable attention and remains a very active topic in harmonic analysis; see, for example, \cite{DZ1997StusiaMath,DGMTZ2005MathZ,DY2005CPAM,DY2005JAMS,Auscher2007MAMS,HM2009MathAnn,HLMMY2011MAMS,AMR2008JGA,DL2013JFA,SY2016AdvMath} and references therein. 

Consider the Schr\"odinger operator
		\[
		L=-\Delta+V(x) ,
		\]
where we assume that the nonnegative potential \( V \) satisfies the reverse H\"older  condition ${\rm RH}_q(\Rn)$ for some $q>n/2$, i.e., there exists a constant $C>0$ such that
\begin{align}
    \left( |B|^{-1} \int_{B} V(x)^q \, dx \right)^{\frac{1}{q}} \leq C |B|^{-1} \int_{B} V(x) \, dx \tag{$RH_q$}
\end{align}
for all balls \( B \subset \mathbb{R}^n \).


The BMO space associated with the Schr\"odinger operator, denoted by $\mathrm{BMO}_L(\mathbb{R}^n)$, was introduced by J.~Dziubański et al.(\cite{DGMTZ2005MathZ}) and is  defined as:
\[
{\rm BMO}_L(\mathbb{R}^n) = \left\{ f \in \mathrm{BMO}(\mathbb{R}^n) : \frac{1}{|B|} \int_{B} |f(y)| \, dy < \infty \ \text{for all balls } B = B(x_B, r_B) \text{ with } r_B > \rho(x_B) \right\},
\]
with the norm 
\begin{align}\label{BMOL-norm}
\|f\|_{{\rm BMO}_L}:=\sup\limits_{B} \frac{1}{|B|}\int_B|f(x)-f_B|\,dx + \sup\limits_{B:\, r_B>\rho(x_B)} \frac{1}{|B|}\int_B|f(x)|\,dx. 
\end{align}
Here \( \rho(x) \) is the critical radius function  introduced by Z. Shen \cite{Shen1994IndianaUnivMathJ,Shen1995AnnInstFourier}, defined by
\begin{align}\label{r1}
    \rho(x) := \sup \left\{ r>0 : \frac{1}{r^{n - 2}} \int_{B(x, r)} V(h) \, dh \leq 1 \right\}. 
\end{align}

We note that \( \mathrm{BMO}_L(\mathbb{R}^n) \) is a subspace of \( \mathrm{BMO}(\mathbb{R}^n) \).  If \( V(x)\equiv 1 \), then $\rho(x)$ is a positive constant, so \( \mathrm{BMO}_{-\Delta + 1}(\mathbb{R}^n) \) coincides with the bmo space (see \cite{Goldberg1979DukeMathJ} for the  properties of bmo); if \( V(x) \equiv 0 \), then \( \rho(x) \equiv \infty \), and \( \mathrm{BMO}_{-\Delta}(\mathbb{R}^n) \) reduces to the classical \( \mathrm{BMO}(\mathbb{R}^n)\). In analogy with the proof of the John-Nirenberg inequality, the \( L^1 \)-norm appearing in \eqref{BMOL-norm} can be replaced by the \( L^p \)-norm for every \( 1 \leq p < \infty \).


This ${\rm BMO}_L$ space turns out to be the appropriate  endpoint space for the boundedness of the classical operators associated with the operator $L$. Let us now
 recall some definitions of  operators associated with $L$.   Denote \( T_s^L := e^{-sL} \) as the semigroup generated by \( L \) and  set $
Q_s^L f(x) = s \frac{d}{ds} T_s^L f(x)$. The Littlewood--Paley operators \( s_L, S_L \)  and the semigroup maximal operator \( T_L^* \) are defined as follows:
\begin{align*}
  &s_L(f)(x)= \left( \int_{0}^{\infty} |Q_s^L f(x)|^2 \frac{ds}{s} \right)^{1/2}; \\ 
  &S_L(f)(x)= \left( \int_{0}^{\infty}\!\!\! \int_{B(x,\sqrt{s})} |Q_s^L f(y)|^2 \frac{dy \, ds}{s^{\frac{n}{2} + 1}} \right)^{1/2};\\
&T_L^*f(x)= \sup_{s > 0} |T_s^L f(x)|.
\end{align*}
 

Throughout what follows,  we assume that  
\begin{align}\label{assumption of V}
   V(x)\nequiv 0 \quad   {\rm and} \quad  V\in {\rm RH}_q(\Rn) \quad {\rm for} \ \  q >n/2.  
\end{align}

Under  assumption \eqref{assumption of V}, it holds that $0<\rho(x)<\infty$ for every $x\in \Rn$ (see \cite{Shen1995AnnInstFourier}). It was proven in \cite{DGMTZ2005MathZ} that  \( \mathcal{M} \), \( T_L^* \), \( s_L \), and \( S_L \) are bounded on \( \mathrm{BMO}_L(\mathbb{R}^n) \), where $\mathcal{M}$ is the classical Hardy--Littlewood maximal operator.  This result was later extended to the Heisenberg group setting by C. Lin and H. Liu \cite{LL2011AdvMath}. It is worth noting that, in  contrast to  the Laplacian case (\( V\equiv 0 \)), such pathological infinite behavior does not occur in this setting.

It is then natural to ask:  do these operators map  \( \mathrm{CMO}_L \) into itself? To address this, we first recall the definition of \( \mathrm{CMO}_L\). For a more general operator $L$ whose associated semigroup $e^{-tL}$ satisfies a Poisson-type upper bound,  the space $\mathrm{CMO}_L$   was introduced by D. Deng et al. in \cite{DDSTY2008MichiganMathMathJ}, with the aim of developing a predual theory for the Hardy space $H^1_L$. Their definition replaces the usual integral average $f_B$ by  $e^{-r_BL}f$, and applies to a broad class of   operators.    In the special case of the Schr\"odinger operator considered in this paper, L. Ky \cite{Ky2013PotentialAnal} proved that the space ${\rm CMO}_L$   admits the following simpler equivalent characterization:   $${\rm CMO}_L(\Rn) = \overline{ C_0^\infty(\Rn)}^{\|\cdot\|_{\text{BMO}_L}}.
$$
More recently, L.~Wu and the third author of the present paper  \cite{SW2022JGeomAnal} obtained the following equivalent characterization, which provides a more convenient criterion for showing that a function belongs to ${\rm CMO}_L(\Rn)$.
\begin{proposition}[\cite{SW2022JGeomAnal}]\label{t1}
			Suppose $V\in {\rm RH}_q$ for some $q>n/2$. Let $f\in {\rm BMO}_L( \mathbb{R} ^n)$. Then $f\in {\rm CMO}_L(\IR^n)$ if and only if    $\gamma_i(f)=0$ for $1\leq i\leq5$, where
				\begin{align*}
					&\gamma_{1}(f):=\operatorname*{lim}_{a\to0}\sup_{B:r_{B}\leq a}\left(|B|^{-1}\int_{B}|f(x)-f_{B}|^{2}\,dx\right)^{1/2},\\
					&\gamma_{2}(f):=\operatorname*{lim}_{a\to\infty}\sup_{B:r_{B}\geq a}\left(|B|^{-1}\int_{B}|f(x)-f_{B}|^{2}\,dx\right)^{1/2},\\                    
					&\gamma_{3}(f):=\operatorname*{lim}_{a\to\infty}\sup_{B:B\subseteq(B(0,a))^{c}}\left(|B|^{-1}\int_{B}|f(x)-f_{B}|^{2}\,dx\right)^{1/2},\\
					&\gamma_{4}(f):=\operatorname*{lim}_{a\to\infty}\sup_{B:r_{B}\geq\max\{a,\rho(x_{B})\}}\left(|B|^{-1}\int_{B}|f(x)|^{2}\,dx\right)^{1/2},
                    \end{align*}
                    \begin{align*}
					&\gamma_{5}(f):=\operatorname*{lim}_{a\to\infty}\sup_{B:B\subseteq(B(0,a))^{c}\atop r_B\geq \rho(x_B)}\left(|B|^{-1}\int_{B}|f(x)|^{2}\, dx\right)^{1/2}.
				\end{align*}
			Here $B=B(x_B,r_B)$ and the function $\rho(x)$ is defined in \eqref{r1}.
		\end{proposition}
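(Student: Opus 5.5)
We use Ky's characterization ${\rm CMO}_L=\overline{C_0^\infty}^{\|\cdot\|_{{\rm BMO}_L}}$ and prove the two directions separately.

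\emph{Necessity.} First check that each $\gamma_i$ is subadditive and satisfies $\gamma_i(f)\le C\|f\|_{{\rm BMO}_L}$. For the $L^2$ oscillation this is John--Nirenberg. For $\gamma_4,\gamma_5$ it is the $L^2$ version of the second term in \eqref{BMOL-norm}, obtained from the $L^1$ one by the remark following the definition of ${\rm BMO}_L$. It follows that the set $\{f:\gamma_i(f)=0,\ 1\le i\le 5\}$ is closed in ${\rm BMO}_L$. It then suffices to verify $\gamma_i(\varphi)=0$ for $\varphi\in C_0^\infty$ with ${\rm supp}\,\varphi\subset B(0,R)$:
\begin{itemize}[leftmargin=*]
\item $\gamma_1$ vanishes because $\varphi$ is Lipschitz, so the oscillation over $B$ is at most $\|\nabla\varphi\|_\infty r_B$.
\item $\gamma_2$ vanishes because the oscillation is at most $2\|\varphi\|_2|B|^{-1/2}\to0$ as $r_B\to\infty$.
\item $\gamma_3$ and $\gamma_5$ vanish because $\varphi\equiv0$ on balls outside $B(0,R)$.
\item $\gamma_4$ vanishes because $|B|^{-1}\int_B|\varphi|^2\le \|\varphi\|_2^2|B|^{-1}\to0$.
\end{itemize}

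\emph{Sufficiency.} This is the main work. Given $f\in{\rm BMO}_L$ with all $\gamma_i(f)=0$ and $\varepsilon>0$, we construct $g\in C_0^\infty$ with $\|f-g\|_{{\rm BMO}_L}\lesssim\varepsilon$. Because $\gamma_1,\gamma_2,\gamma_3=0$, Uchiyama's classical theorem already places $f$ in ${\rm CMO}(\Rn)$. The standard construction is as follows:
\begin{itemize}[leftmargin=*]
\item Mollify, $f_\delta=f*\phi_\delta$. Here $\gamma_1=0$ gives $\|f-f_\delta\|_{\rm BMO}\lesssim\varepsilon$ for small $\delta$.
\item Truncate at large scale using $\gamma_2,\gamma_3$. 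Replace $f$ outside a large ball by a locally constant, slowly varying approximation, then cut off with $\psi_R$ adapted to dyadic annuli.
\end{itemize}
The new point is to control the inhomogeneous part $\sup_{r_B>\rho(x_B)}|B|^{-1}\int_B|f-g|$. Here we exploit $\gamma_4,\gamma_5$. Fix $a$ large so that, on every ball with $r_B\ge\rho(x_B)$ that is either outside $B(0,a)$ or has radius $\ge a$, the $L^2$ average of $|f|$ is $<\varepsilon$. Then take $g=\psi(f-c)$ restricted appropriately, or simply $g=(f*\phi_\delta)\,\chi_a$ with $\chi_a$ a smooth cutoff equal to $1$ on $B(0,a)$ and supported in $B(0,2a)$.

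Since $\rho$ is bounded above and below on the compact set $\overline{B(0,3a)}$ (Shen's estimate $\rho(y)\sim\rho(x)$ when $|x-y|\lesssim\rho(x)$), the region near the origin involves only balls of comparable, bounded-below critical radius. There a uniform mollification estimate handles both terms of the norm. For $f(1-\chi_a)$, split balls $B$ into three cases:
\begin{itemize}[leftmargin=*]
\item balls inside $B(0,a)^c$;
\item balls of radius $\ge a$;
\item small balls meeting the annulus $a\le|x|\le 2a$.
\end{itemize}
The first two cases use $\gamma_3,\gamma_5$ and $\gamma_2,\gamma_4$ respectively. For small balls, the product rule $\mathrm{osc}(f\chi)\le \mathrm{osc}(f)+\|\nabla\chi\|_\infty r_B\,|B|^{-1}\int_B|f|$ works, because $|f|$ has controlled averages there. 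Indeed, a ball of radius $\rho(x)$ near the annulus lies outside $B(0,a/2)$, so $\gamma_5$ applies. The averages on smaller balls are then bounded by a telescoping argument costing $\log(\rho/r_B)$, which is compensated by the factor $r_B/a$.

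\emph{Main obstacle.} The hardest step is this cutoff estimate. It needs the comparison $\rho(x)\lesssim a$ near the annulus for large $a$, namely Shen's bound $\rho(x)\lesssim (1+|x|)^{k_0/(k_0+1)}$ with $k_0<1$ growth. If $\rho$ grew linearly the argument would fail, so I would first try to prove the lemma that $\rho(x)/|x|\to0$, which holds since $V\not\equiv0$.
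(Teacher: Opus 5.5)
The paper gives no proof of this proposition. It is imported from \cite{SW2022JGeomAnal}, and Section~2 only reformulates the conditions afterwards (Lemma~\ref{t2}, Proposition~\ref{CMOL-new characterization}). Your argument is therefore an independent, self-contained route through Ky's density characterization, and it is essentially correct.

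\emph{Necessity} is fine as written. Subadditivity together with $\gamma_i(f)\lesssim\|f\|_{{\rm BMO}_L}$ makes the common zero set closed, and $C_0^\infty$ lies in it.

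\emph{Sufficiency.} The decisive observation, which you eventually reach, is that $\gamma_5$ controls $f$ itself far out. So you may truncate $f$ directly with $g=(f*\phi_\delta)\chi_a$, without subtracting local constants. The appeal to Uchiyama's theorem and the ``locally constant, slowly varying approximation'' are then superfluous. On the transition annulus with $r_B<\rho(x_B)$, Lemma~\ref{l2} already bounds the cutoff error by $(1+\log(\rho(x_B)/r_B))(r_B/a)\|f\|_{{\rm BMO}_L}\lesssim(\rho(x_B)/a)\|f\|_{{\rm BMO}_L}$, so the $\gamma_5$-plus-telescoping smallness is not even needed. Moreover, $\rho(x)/|x|\to0$ is immediate from the upper bound in Proposition~\ref{p2}, because the exponent $k_0/(k_0+1)$ is $<1$; no separate lemma is required. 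Note that $k_0\ge1$, so your ``$k_0<1$'' is a slip.

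Two steps need to be written out.

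\textbf{(a) The region near the origin.} The bound $\|f-f*\phi_\delta\|_{\rm BMO}\lesssim\varepsilon$ does not control the second term of the ${\rm BMO}_L$ norm, because the BMO seminorm is blind to constants. What you need is that $|B|^{-1}\int_B|f-f*\phi_\delta|$ is bounded by the supremum of oscillations of $f$ over balls of radius at most $2\eta$, for every ball with $r_B\ge\eta\ge\delta$. This follows from $\gamma_1(f)=0$: write $f-f*\phi_\delta=\int\phi_\delta(y)\bigl(f-f(\cdot-y)\bigr)\,dy$ and cover $B$ by cubes of side comparable to $\eta$. You must then take $\eta\le\min\{a,\min_{\overline{B(0,3a)}}\rho\}$, so that every ball with $r_B>\rho(x_B)$ meeting ${\rm supp}\,\chi_a$ has radius at least $\eta$. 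In particular, $\delta$ is chosen after $a$.

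\textbf{(b) The case split for $f(1-\chi_a)$.} Organize it first by $r_B\ge\rho(x_B)$ (use averages, via $\gamma_5$ or $\gamma_4$) versus $r_B<\rho(x_B)$ (use oscillations). In the second case, a ball of radius $\ge a$ is not covered by $\gamma_4$. You need the sublinearity of $\rho$ once more to see that such a ball lies in $B(0,2a)^c$, where $f(1-\chi_a)=f$ and $\gamma_2$ or $\gamma_3$ applies.

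Compared with the bare citation, your route explains why the $L$-adapted conditions make truncation easier than in Uchiyama's classical argument. It also makes visible that $\gamma_2$ and $\gamma_4$ enter only through large balls, consistent with their redundancy established in Proposition~\ref{CMOL-new characterization}.
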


Most recently, X.~Han, J.~Li and L.~Wu (\cite{HLW2025NYJ}) have  shown that the Hardy--Littlewood operator  $\mathcal{M}$ and the semigroup $e^{-t\sqrt{L}}$ map ${\rm CMO}_L(\mathbb{R}^n)$ into itself. Their proof relies on  precise heat kernel estimates, Proposition \ref{t1}, and the structural properties of ${\rm CMO}_L(\mathbb{R}^n)$ established in \cite{SW2022JGeomAnal}. 
Nevertheless,  the action  of the Littlewood--Paley operators $s_L, S_L$ and the semigroup maximal operator  $T_L^*$ on  ${\rm CMO}_L(\mathbb{R}^n)$ has not yet been investigated; in fact, the argument is far from straightforward.  In the present  paper, we settle the question and formulate our main result as follows.
  \begin{theorem}\label{th6}
     Assume \eqref{assumption of V}. The operators  $s_L, S_L, T_L^*$ are mappings from  ${\rm CMO}_L(\mathbb{R}^n)$ to  ${\rm CMO}_L(\mathbb{R}^n)$. 
  \end{theorem}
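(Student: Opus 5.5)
We plan to verify, for $f\in{\rm CMO}_L(\Rn)$ and $T\in\{s_L,S_L,T_L^*\}$, that $Tf\in{\rm BMO}_L(\Rn)$ and that $\gamma_i(Tf)=0$ for $1\le i\le 5$; Proposition~\ref{t1} then gives $Tf\in{\rm CMO}_L(\Rn)$. Membership in ${\rm BMO}_L$ is the result of \cite{DGMTZ2005MathZ}. Since all three operators are sublinear and bounded on ${\rm BMO}_L$, we can reduce to nice functions. By Ky's characterization, for any $\varepsilon>0$ we choose $g\in C_0^\infty(\Rn)$ with $\|f-g\|_{{\rm BMO}_L}<\varepsilon$. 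We have $|Tf-Tg|\le T(f-g)$ pointwise. Each $\gamma_i$ is a sublinear functional dominated by the ${\rm BMO}_L$ norm, and it is monotone enough that a pointwise bound $|h_1-h_2|\le h_3$ yields $\gamma_i(h_1)\le\gamma_i(h_2)+C\|h_3\|_{{\rm BMO}_L}$. For $i=4,5$ this is immediate from the $L^2$ averages. For $i=1,2,3$ we use that the oscillation of $h_1-h_2$ is controlled by twice the $L^2$ average of $|h_1-h_2|$, but only on balls with $r_B\ge\rho(x_B)$; small balls need separate care, see below. So it suffices to show $\gamma_i(Tg)=0$ for $g\in C_0^\infty$, plus a uniform control.

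Because the oscillation of $|Tf-Tg|$ on small balls is not controlled by the average of $T(f-g)$, we instead aim for a \emph{local} version of the BMO estimate of \cite{DGMTZ2005MathZ}. For a ball $B=B(x_B,r_B)$ we split the scale parameter $s$ in $s_L,S_L,T_L^*$ at $s_0=r_B\rho(x_B)$ (following the abstract's hint), rather than at $r_B^2$ or $\rho(x_B)^2$. For $s\le s_0$ we take the local part $f\chi_{2B^*}$, with $B^*$ a dilate of radius comparable to $\sqrt{s_0}$, subtracting its mean if needed. We bound it via $L^2$ boundedness by the $L^2$ oscillation/average of $f$ over $B^*$. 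For $s>s_0$ we use the heat kernel decay $e^{-c|x-y|^2/s}(1+\sqrt s/\rho(x))^{-N}$ and the H\"older regularity in $x$ of $Q_s^L$ and $T^L_s$ kernels. This shows that the oscillation of the large-$s$ part over $B$ is bounded by $(r_B/\sqrt{s_0})^{\delta}=(r_B/\rho(x_B))^{\delta/2}$ times a sum of averages of $|f|$ over dyadic dilates. The target inequality has the form
\[
\Big(\frac1{|B|}\int_B|Tf-(Tf)_B|^2\Big)^{1/2}\lesssim \sum_{k\ge0}2^{-k\delta}\Big(\frac1{|2^kB^*|}\int_{2^kB^*}|f-f_{2^kB^*}|^2\Big)^{1/2}+\Big(\frac{r_B}{\rho(x_B)}\Big)^{\delta'}\sup_{k}\Big(\frac1{|2^kB^*|}\int_{2^kB^*}|f|^2\Big)^{1/2}.
\]
A similar bound holds for $\frac1{|B|}\int_B|Tf|^2$ when $r_B\ge\rho(x_B)$, controlled by the $\gamma_4,\gamma_5$-type averages of $f$. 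The choice $s_0=r_B\rho(x_B)$ matters. Splitting at $r_B^2$ leaves an uncontrolled term involving averages of $|f|$ on balls of radius between $r_B$ and $\rho(x_B)$ with no decay. Splitting at $\rho(x_B)^2$ makes the local piece live on a ball of radius $\rho$, whose oscillation does not vanish as $r_B\to0$. The geometric mean gives both a shrinking local ball ($\sqrt{r_B\rho}\to0$ relative to... and $\to0$ as $r_B\to0$) and a decaying factor $(r_B/\rho)^{\delta'}$.

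With this inequality in hand, each $\gamma_i(Tf)$ is handled as follows. For $\gamma_1$, as $r_B\to0$ the radius of $B^*$ tends to $0$ uniformly on compacts, and $\rho$ is bounded below on compacts. Far away, $\rho$ may be small, but then $r_B/\rho$ compared with... we split into the cases $\sqrt{r_B\rho}\le a'$ and otherwise. The first terms vanish by $\gamma_1(f)=0$ (for small $k$) and by $2^{-k\delta}$ decay plus $\gamma_2,\gamma_3$ (for large $k$). The second term vanishes because the sup of averages of $|f|$ is bounded by $\|f\|_{{\rm BMO}_L}$ times a logarithm absorbed by the power. For $\gamma_2,\gamma_3,\gamma_4,\gamma_5$, the corresponding large or far balls transfer to large or far dilates where the respective $\gamma(f)$ vanish. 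Near-origin tails are treated with the $\rho$-growth estimate $\rho(y)\lesssim\rho(x)(1+|x-y|/\rho(x))^{k_0/(k_0+1)}$. We expect the main obstacle to be the $\gamma_1$ and $\gamma_3$ estimates for small balls far from the origin, where $\rho(x_B)$ may be tiny. There the term with averages of $|f|$ must be shown small; we would first try to use $\gamma_5(f)=0$ on the dilates $2^kB^*$ with radius at least $\rho$ together with the factor $(r_B/\rho)^{\delta'}$. The $S_L$ case follows the same scheme with $x$ replaced by $y\in B(x,\sqrt s)$.
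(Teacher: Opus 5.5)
There is a genuine gap in the step you identify as central: the small-$s$ part. You place the local piece on a ball $B^*$ of radius $\approx\sqrt{s_0}=\sqrt{r_B\rho(x_B)}$ and bound it by $L^2$-boundedness, but the normalizations do not match:
\[
\frac{1}{|B|}\int_B\big|s_L\big((f-f_{B^*})\chi_{2B^*}\big)\big|^2\,dx\lesssim\frac{|2B^*|}{|B|}\cdot\frac{1}{|2B^*|}\int_{2B^*}|f-f_{B^*}|^2\,dx,\qquad \frac{|2B^*|}{|B|}\approx\Big(\frac{\rho(x_B)}{r_B}\Big)^{n/2}.
\]
This factor blows up exactly in the regime $r_B\ll\rho(x_B)$. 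In fact your displayed target inequality is false. Its right-hand side sees $f$ only through averages at scales $\gtrsim\sqrt{r_B\rho(x_B)}\gg r_B$. Take $f=\phi((\cdot-x_B)/r_B)$ with a suitable fixed bump $\phi$ supported in $B(0,1/2)$. Then the right-hand side is $O\big((r_B/\rho(x_B))^{n/4}\big)$. The oscillation of $s_Lf$ on $B$, however, stays bounded below by scaling, since $L$ is a small perturbation of $-\Delta$ at scales $\ll\rho(x_B)$ (cf.\ Proposition \ref{6a3}). There is a second problem: once you subtract $f_{B^*}$, the constant piece $f_{B^*}Q_s^L(1)$ for $s\le s_0$ must be estimated. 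With only the bounds (i)--(ii) you invoke, $\int_0^{s_0}|Q_s^L(1)(x)|^2\,ds/s$ is not controlled. The missing ingredient is the cancellation estimate (iii) of Proposition \ref{p1}, which your plan never uses.

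In the paper the spatial decomposition stays at scale $r_B$: $f=f_1+f_2+f_3$ as in \eqref{el}, with $B^*=4B$.
The local part $f_1$ is handled by $L^2$-boundedness and \eqref{g44}. The tail $f_2$ is handled by kernel decay for $s<r_B^2$ and H\"older regularity for $s\ge r_B^2$, which reduces to Lemma \ref{l4}. The split at $r_B\rho(x_B)$ is applied \emph{only} to the constant $f_3=f_{4B}$. Estimate (iii) for $s<r_B\rho(x_B)$ and (ii) for $s\ge r_B\rho(x_B)$ both give the factor $(r_B/\rho(x_B))^{\delta/2}$ (Lemma \ref{l6}). This factor absorbs $|f_{4B}|\lesssim(1+\log(\rho(x_B)/r_B))\|f\|_{{\rm BMO}_L}$ when $\rho(x_B)\ge\sigma/2$. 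When $\rho(x_B)<\sigma/2$ it absorbs $|f_{4B}|\lesssim\log(\rho(x_B)/r_B)\,\varepsilon$, obtained by telescoping, \eqref{g44} and Lemma \ref{12}. This is how the ``main obstacle'' you left open is settled, and your instinct to use $\gamma_5$ at radius $\approx\rho(x_B)$ together with the decay factor is essentially what Lemma \ref{12} provides. For $T_L^*$, $e^{-tL}1$ has no cancellation of type (iii). The paper therefore subtracts $f_{4B}$ itself for $t<r_B\rho(x_B)$ and compares with $e^{t\Delta}$ via Proposition \ref{6a3}; your scheme does not address this. Finally, the approximation argument in your first paragraph cannot serve as a fallback. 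A pointwise bound $|h_1-h_2|\le h_3$ does not control the oscillation functionals $\gamma_1,\gamma_2,\gamma_3$ by $\|h_3\|_{{\rm BMO}_L}$ on balls with $r_B<\rho(x_B)$.
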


  To prove Theorem \ref{th6}, we  first establish two novel characterizations of the space \( \mathrm{CMO}_L(\mathbb{R}^n) \) (see Propositions \ref{cor:equivalent character} and  \ref{CMOL-new characterization} ), which serve as foundational tools for the proof of our main results. The novelty of Proposition \ref{cor:equivalent character} lies in two aspects. One is that, we obtain  a new property of ${\rm CMO}_L$ (Lemma \ref{12}), which will be crucial  for proving the estimate \eqref{important estimate}.  The other is that, comparing with Proposition \ref{t1},  we replace the distance from the ball $B$ to the origin in $\gamma_3$ and $\gamma_5$ by the distance from its center to the origin, namely $|x_B|$.  The  advantage of this new characterization is that when $B$ is  scaled  by $k>1$, the center remains fixed, while the distance from the enlarged ball $kB$ to the origin changes dramatically. Using this new characterization will significantly simplifies our proof. 
  
  Secondly, for $f\in {\rm CMO}_L(\mathbb{R}^n)$, by Proposition \ref{CMOL-new characterization},  in order to prove $s_L(f)\in {\rm CMO}_L(\mathbb{R}^n)$,  we need to verify that $s_L(f)$ satisfies the three  conditions \eqref{f41}, \eqref{f42}, and \eqref{f43}.  The most difficult case is  checking  \eqref{f41} when $r_B\leq \rho(x_B)$. To this end, We  decompose $f=f_1+f_2+f_3$, where $f_1=(f-f_{B^*})\chi_{B^*}$, $f_2=(f-f_{B^*})\chi_{(B^*)^c}$, and  $f_3=f_{B^*}$. See \eqref{el} for details.  The terms $s_L(f_1)$ and $s_L(f_2)$ can be handled by  heat kernel  estimates. The remaining term is $s_L(f_3)=|f_{B^*}|s_L(1)$; this is  the most troublesome part, and proving its smallness (up to a constant depending only on $B$) requires some new ideas.  Previously, in the proof of $s_L(f)\in {\rm BMO}_L(\mathbb{R}^n)$, it sufficed to show that  $s_L(f_3)$ is bounded up  to a constant $C_B$. The standard strategy was to  split the integral over  $(0,\infty)$  into two pieces --- either $(0, r_B^2]$ and $(r_B^2, \infty)$ or $(0,\rho(x_B)^2]$ and $(\rho(x_B)^2,+\infty)$ --- and estimate each part separately; see, e.g.,  \cite[Theorem 8]{DGMTZ2005MathZ} and \cite[Pages 1675-1676]{LL2011AdvMath}. However, proceeding in this way makes it difficult to show that the integral over $(r_B^2, \rho(x_B)^2)$ is small.  To overcome the difficulty, we introduce a refined decomposition strategy:  we split the integral into two parts over $(0, {r_B\rho(x_B)}]$ and $[{r_B\rho(x_B)},\infty)$,  estimating the first part using the cancellation property of $tLe^{-tL}$, and the second part by the smooth estimate of $tLe^{-tL}$ (see Lemma \ref{l6}). This novel partitioning enables us to successfully  obtain the desired estimate.

 The proofs  for $S_L$ and $T_L^*$ employ a   strategy  similar to  that  for  $s_L$ in terms of key ideas, but there are moderate differences in the  details. We highlight these distinctions in our proof.



  This article is organized as follows. In section 2, we  collect some basic  facts  about  heat kernels   
 associated to Sch\"odinger operators and obtain  two new characterizations of the ${\rm CMO}_L$ space. Theorem \ref{th6} is then proved in  Sections 3-5:  Section 3 treats the operator $s_L$, Section 4 treats $S_L$, and  Section 5 treats $T^*_L$, each showing that the respective operator  maps ${\rm CMO}_L(\mathbb{R}^n)$ into itself.

  In this article, the letter $C$ or $c$ denotes a (possibly different)  constant independent of  the fundamental variables. We write  $A\approx B\,\,(resp.\,\,A\lesssim B)$ to mean that there exists a constant $C$ such that $C^{-1}A\leq B\leq CA$ (resp. $A\leq CB$).  The notation  $A\ll B$ (or $A\gg B$) indicates that $A$ is much less (respectively, much greater) than $B$. Finally, $\fint_B f(x)\,dx$ stands for $|B|^{-1}\int_Bf(x)\,dx$.

        \bigskip

        \section{Preliminaries}

\subsection{Estimates on the semigroup kernels}

In this subsection, we collect some basic  facts  of the critical function $\rho$, heat kernels   
 associated to Sch\"odinger operators  under the assumption \eqref{assumption of V}.  
 

\begin{proposition}[\cite{Shen1994IndianaUnivMathJ}]\label{p2}
				There exist constants $C>0$ and $k_0\geq 1$ such that for all $x,y\in \mathbb{R}^n$,
			\[
			C^{-1}\rho(x)\left(1+\cfrac{|x-y|}{\rho(x)}\right)^{-k_0}\leq \rho(y)\leq C\rho(x)\left(1+\cfrac{|x-y|}{\rho(x)}\right)^{\frac{k_0}{k_0+1}}.
			\]
			Consequently, $\rho(y)\leq C(\rho(x)+|x-y|)$ for all $x,y$, and   $\rho(x)\approx\rho(y)$ when  $y\in B(x,r)$ with $r\lesssim \rho(x)$.  
	\end{proposition}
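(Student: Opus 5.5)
The plan is to derive Proposition \ref{p2} from two growth estimates for the function
\[
\psi(x,r):=\frac{1}{r^{n-2}}\int_{B(x,r)}V(h)\,dh ,
\]
which by \eqref{r1} and continuity in $r$ satisfies $\psi(x,\rho(x))=1$ whenever $0<\rho(x)<\infty$.

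\emph{First estimate (decay toward small radii).} From ${\rm RH}_q$ and H\"older's inequality we get, for $0<r<R$,
\[
\int_{B(x,r)}V\le |B(x,r)|^{1-1/q}\Big(\int_{B(x,R)}V^q\Big)^{1/q}\lesssim \Big(\frac rR\Big)^{n-n/q}\int_{B(x,R)}V .
\]
Hence $\psi(x,r)\lesssim (r/R)^{\delta}\psi(x,R)$, where $\delta:=2-n/q>0$ because $q>n/2$. Taking $r=\rho(x)$ gives $\psi(x,R)\gtrsim (R/\rho(x))^\delta$ for all $R\ge \rho(x)$.

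\emph{Second estimate (doubling).} Since ${\rm RH}_q$ weights are doubling, there is $\mu>0$ with $\int_{B(x,2r)}V\le 2^{\mu}\int_{B(x,r)}V$. Iterating, $\psi(x,R)\lesssim (R/r)^{\mu+2-n}\psi(x,r)$ for $R\ge r$. Taking $r=\rho(x)$ gives $\psi(x,R)\lesssim (R/\rho(x))^{\mu+2-n}$ for $R\ge\rho(x)$. We may enlarge $\mu$ so that $\mu+2-n>0$.

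\emph{Step 1: nearby points.} Suppose $|x-y|\le C\rho(x)$. Then $B(y,\rho(x))$ and $B(x,\rho(x))$ are both contained in a ball of radius comparable to $\rho(x)$ and both contain one, so by doubling $\psi(y,\rho(x))\approx \psi(x,\rho(x))=1$. The two growth estimates at the point $y$ show that $\psi(y,\cdot)$ crosses the level $1$ at a radius comparable to $\rho(x)$, so $\rho(y)\approx\rho(x)$. This gives both inequalities of the proposition in this regime, since there $1+|x-y|/\rho(x)\approx1$.

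\emph{Step 2: upper bound for far points.} Let $r=|x-y|>\rho(x)$. Because $B(x,r)\subset B(y,2r)$, we have
\[
\psi(y,2r)\gtrsim \psi(x,r)\gtrsim (r/\rho(x))^{\delta}.
\]
Now go down in scale at $y$ using doubling: for $R\le 2r$,
\[
\psi(y,R)\gtrsim (R/r)^{\mu+2-n}\,(r/\rho(x))^{\delta}.
\]
Choose $R=C\rho(x)^{1/(k_0+1)}r^{k_0/(k_0+1)}$ with $k_0$ so large that $\delta\ge(\mu+2-n)/(k_0+1)$. Then the right-hand side is $\ge1$, which forces $\rho(y)\le R$; this is exactly the right-hand inequality of the proposition. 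The linear consequence $\rho(y)\le C(\rho(x)+|x-y|)$ follows immediately, since $(1+t)^{k_0/(k_0+1)}\le 1+t$.

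\emph{Step 3: lower bound by symmetry.} Swap the roles of $x$ and $y$ in Step 2:
\[
\rho(x)\le C\rho(y)\big(1+|x-y|/\rho(y)\big)^{k_0/(k_0+1)}.
\]
If $\rho(y)\ge |x-y|$ this already gives $\rho(y)\gtrsim\rho(x)$. Otherwise it gives $\rho(x)\lesssim \rho(y)^{1/(k_0+1)}|x-y|^{k_0/(k_0+1)}$, which rearranges to
\[
\rho(y)\gtrsim \rho(x)^{k_0+1}|x-y|^{-k_0}\approx\rho(x)\big(1+|x-y|/\rho(x)\big)^{-k_0}.
\]
The final assertion, $\rho(x)\approx\rho(y)$ for $y\in B(x,r)$ with $r\lesssim\rho(x)$, is then immediate from the two-sided bound.

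The main obstacle is the bookkeeping of exponents in Step 2. We must make sure that a single $k_0$ works simultaneously for the upper bound and, via the inversion in Step 3, for the lower bound. This is exactly where the positivity $\delta=2-n/q>0$, i.e.\ the hypothesis $q>n/2$, is used.
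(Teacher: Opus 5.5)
The paper does not prove this proposition; it is quoted from Shen. Your argument is essentially Shen's: the ${\rm RH}_q$ decay $\psi(x,r)\lesssim (r/R)^{2-n/q}\psi(x,R)$, doubling of $V\,dx$, comparability of $\rho$ for $|x-y|\lesssim\rho(x)$, a direct one-sided bound for distant points, and the other bound obtained by interchanging $x$ and $y$.

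The one real difference is the order, and it matters slightly. The standard argument bounds $\rho(y)$ from below directly. With $t=|x-y|\ge\rho(x)$, doubling at $x$ gives $\psi(y,t)\le 2^{n-2}\psi(x,2t)\lesssim (t/\rho(x))^{\mu+2-n}$. Decay at $y$ then gives $\psi(y,r)\le 1$ for $r=c\,t(\rho(x)/t)^{(\mu+2-n)/\delta}$, hence $\rho(y)\ge r$. The $k_0/(k_0+1)$ upper bound then follows by inversion. That order only uses the implication ``$\psi(y,r)\le 1\Rightarrow \rho(y)\ge r$'', which is immediate from the definition of $\rho$ as a supremum.

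Your order needs the converse implication, and the sentence in Step 2 that ``$\psi(y,R)\ge 1$ forces $\rho(y)\le R$'' is false as stated. The function $\psi(y,\cdot)$ need not be monotone: for $n\ge 3$ and $V$ equal to a large constant on a ball and a small positive constant outside (an ${\rm RH}_\infty$ weight), $\psi$ dips just past the ball's radius. The repair uses only what you already have. Take the constant in $R$ large enough that $\psi(y,R)>C_0$, where $C_0$ is the constant in your first estimate. Then $\psi(y,s)\ge C_0^{-1}(s/R)^{\delta}\psi(y,R)>1$ for all $s\ge R$, so $\rho(y)\le R$. The same reasoning is what makes your ``crossing'' claim in Step 1 precise.

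Two smaller points. First, your choice of $R$ satisfies $R\le 2r$ only when $|x-y|\ge C'\rho(x)$ for a large $C'$, so the intermediate range has to be sent to Step 1. Second, the exponent bookkeeping you flag as the main obstacle is automatic, since inversion always pairs $k_0$ with $k_0/(k_0+1)$.
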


  From the Feynman--Kac formula, it is known that the semigroup kernels  $k_s^L(x,y)$,  associated with  $T_s^L=e^{-sL}$, satisfy the estimates
  \begin{align}\label{6a}
      0\leq k^L_s(x,y)\leq Cs^{-\frac{n}{2}}e^{-A\frac{|x-y|^2}{s}}.
  \end{align}
The above estimates can be improved when $V>0$ satisfies the reverse H\"older condition ${\rm RH}_q$ for some $q>n/2$.
   \begin{proposition}
          [\cite{DZ2003ColloqMath}]\label{6a1}
      For any $N>0$, there exists a constant $C_N>0$ such that 
     \begin{align*}
         0\leq k_s^L(x,y)\leq C_N s^{-\frac{n}{2}}e^{-A\frac{|x-y|^2}{s}}\left(1+\cfrac{\sqrt{s}}{\rho(x)}+\cfrac{\sqrt{s}}{\rho(y)}\right)^{-N}.
     \end{align*}
  \end{proposition}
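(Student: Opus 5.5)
The statement is the kernel bound of Dziubański--Zienkiewicz, and I will prove it by combining the Gaussian bound \eqref{6a} with a decay estimate for $e^{-sL}1$. The kernel is symmetric, $k_s^L(x,y)=k_s^L(y,x)$. Also, for $|x-y|\le \sqrt s$ Proposition \ref{p2} gives $\rho(x)\approx\rho(y)$ up to powers of $1+\sqrt s/\rho(x)$, while for $|x-y|>\sqrt s$ the Gaussian factor absorbs such powers. So it suffices to prove the one-sided bound
\[
k_s^L(x,y)\le C_N s^{-n/2}e^{-A|x-y|^2/s}\bigl(1+\sqrt s/\rho(x)\bigr)^{-N}.
\]
Using the semigroup property $k_s^L(x,y)=\int k_{s/2}^L(x,z)k_{s/2}^L(z,y)\,dz$, I split the $z$-integral according to whether $|x-z|$ or $|z-y|$ exceeds $|x-y|/2$. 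On the region where $|z-y|\ge |x-y|/2$, I bound $k_{s/2}^L(z,y)$ by \eqref{6a}, pulling out $s^{-n/2}e^{-c|x-y|^2/s}$, and what remains is $e^{-\frac s2 L}1(x)$. On the other region I use the Gaussian on $k_{s/2}^L(x,z)$ instead. There I bound $\int k_{s/2}^L(x,z)e^{-c|x-z|^2/s}\,dz\cdot s^{-n/2}$ by a weighted version of $e^{-\frac s4 L}1$, obtained by applying the semigroup property once more. So everything reduces to proving
\[
e^{-sL}1(x)\le C_N\bigl(1+\sqrt s/\rho(x)\bigr)^{-N}, \qquad \text{together with its Gaussian-weighted analogue.}
\]

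To get this decay I first prove a one-step contraction: there are $\theta<1$ and $c_0>0$ such that $e^{-sL}1(x)\le\theta$ whenever $s=c_0^{-1}\rho(x)^2$. The tool is the Duhamel formula
\[
1-e^{-sL}1(x)=\int_0^s\!\!\int k_{t}^L(x,z)V(z)\,dz\,dt .
\]
For a lower bound on the right-hand side, I use a near-diagonal lower bound $k_t^L(x,z)\gtrsim t^{-n/2}$ for $|x-z|\le\sqrt t\lesssim\rho(x)$. I get this again from Duhamel: $p_t-k_t^L\le\int_0^t\!\int p_{t-\tau}(x,w)V(w)p_\tau(w,z)\,dw\,d\tau$, and this error is small compared with $p_t$ when $t\ll\rho(x)^2$, by the definition \eqref{r1}. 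Combining this with $\rho(x)^{2-n}\int_{B(x,\rho(x))}V\approx1$, which follows from \eqref{r1} and the doubling of $V\,dx$ coming from ${\rm RH}_q$, gives $1-e^{-sL}1(x)\ge 1-\theta$.

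Next I iterate. Writing $e^{-2sL}1(x)=\int k_s^L(x,z)e^{-sL}1(z)\,dz$, the mass at $|z-x|\lesssim\rho(x)$ sees the contraction because $\rho(z)\approx\rho(x)$ there. The far mass is Gaussian-small, and there $\rho(z)$ grows at most polynomially by Proposition \ref{p2}. An induction on $k$ with $s=2^kc_0^{-1}\rho(x)^2$, run for the weighted quantities $\sup_x\int k_s^L(x,z)e^{\epsilon|x-z|^2/s}\,dz$, then gives decay faster than any power of $s/\rho(x)^2$.

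I expect the main obstacle to be this iteration. Each time the scale $\sqrt s$ doubles, the region where $\rho(z)\approx\rho(x)$ becomes relatively small, so the far-field points with larger $\rho(z)$ get less contraction. This loss has to be compensated by the growth of $\int_{B(x,r)}V$. My first attempt would be Shen's estimate $r^{2-n}\int_{B(x,r)}V\gtrsim (r/\rho(x))^{\delta}$ for $r\ge\rho(x)$, which follows from ${\rm RH}_q$. Feeding it directly into the Duhamel lower bound at every scale should yield $e^{-sL}1(x)\lesssim \exp\bigl(-c(\sqrt s/\rho(x))^{\delta'}\bigr)$, which is stronger than any power, at least in the region away from where $\rho$ degenerates. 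The remaining steps are routine Gaussian bookkeeping.
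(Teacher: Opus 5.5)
The paper gives no proof of this statement; it is quoted from \cite{DZ2003ColloqMath}, so your argument has to stand on its own. Your preliminary reductions are sound. Symmetry of $k_s^L$ reduces matters to decay in $\rho(x)$ alone. The semigroup splitting reduces the kernel bound to $e^{-sL}1(x)\le C_N(1+\sqrt s/\rho(x))^{-N}$; the Gaussian-weighted version follows directly from this by Cauchy--Schwarz and \eqref{6a}, with $N$ halved. The one-scale contraction $e^{-sL}1(x)\le\theta<1$ for $s\ge c_0^{-1}\rho(x)^2$ via Duhamel is also correct. To get it you need monotonicity of $s\mapsto e^{-sL}1$, and you need Shen's ${\rm RH}_q$ estimates (e.g.\ $\int_{B(x,r)}V(w)|x-w|^{2-n}\,dw\lesssim (r/\rho(x))^{2-n/q}$ for $r\le\rho(x)$), not the definition \eqref{r1} alone, to make the perturbation error small.

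The genuine gap is the step that turns this contraction into decay, which is the whole content of the statement. Your doubling step does not close as described. For $s=2^kc_0^{-1}\rho(x)^2$ with $k$ large, \eqref{6a} shows that $k_s^L(x,\cdot)$ puts only mass $O((\rho(x)/\sqrt s)^n)$ on $\{|z-x|\lesssim\rho(x)\}$. The bulk sits at $|z-x|\sim\sqrt s$, which is neither Gaussian-small nor a region where $\rho(z)\approx\rho(x)$. Your proposed remedy also fails. The Duhamel identity only says $1-e^{-sL}1(x)\in[0,1]$, so no lower bound on $\int_{B(x,r)}V$ can give more than a fixed contraction. Moreover, the near-diagonal bound $k_t^L\gtrsim t^{-n/2}$ it relies on is available only for $t\lesssim\rho(x)^2$.

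Decay must come from the multiplicative semigroup iteration. The missing observation is that you need not $\rho(z)\approx\rho(x)$ but only $c_0^{-1}\rho(z)^2\le s$. Proposition~\ref{p2} gives $\rho(z)\lesssim\rho(x)^{1/(k_0+1)}(\rho(x)+|x-z|)^{k_0/(k_0+1)}$, so this condition holds on all of $B(x,K\sqrt s)$ with $K\approx(\sqrt s/\rho(x))^{1/k_0}$. Hence
\[
e^{-2sL}1(x)\le\theta\,e^{-sL}1(x)+C\exp\bigl(-c(\sqrt s/\rho(x))^{2/k_0}\bigr).
\]
This yields a fixed power decay $(\sqrt s/\rho(x))^{-\gamma}$ with $\gamma=2\log_2(1/\theta)$. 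Feed it back in, taking $K$ a small power of $\sqrt s/\rho(x)$. The contraction factor on $B(x,K\sqrt s)$ then becomes itself $\lesssim(\sqrt s/\rho(x))^{-\gamma'}$ for some $\gamma'>0$, and iterating over dyadic $s$ gives decay faster than any power.

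Alternatively, iterate the contraction for the Dirichlet realization of $L$ on $B(x,r)$ with time step $\tau=c_0^{-1}\sup_{B(x,r)}\rho^2$. Control the difference from $e^{-sL}1(x)$ by the exit probability $\lesssim e^{-cr^2/s}$, and then optimize $r$. Without an argument of this kind, your proposal establishes only $e^{-sL}1\le\theta$, not the claimed bound.
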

  
  \begin{proposition}
      [\cite{DZ2003ColloqMath}]\label{6a2}
    There exist constants $A, \delta>0$ such that for every $N>0$, there exists a  constant $C_N>0$ such that for $|x-x^\prime|<\sqrt{s}$, we have
     \begin{align*}
         \left|k_s^L(x,y)-k_s^L(x^\prime,y)\right|\leq C_N \left(\frac{|x-x^\prime|}{\sqrt{s}}\right)^{\delta} s^{-\frac{n}{2}}e^{-A\frac{|x-y|^2}{s}}\left(1+\cfrac{\sqrt{s}}{\rho(x)}+\cfrac{\sqrt{s}}{\rho(y)}\right)^{-N}.
     \end{align*}
  \end{proposition}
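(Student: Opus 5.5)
The plan is to prove the estimate in two stages. First we establish a Hölder bound \emph{without} the decay factor. Then we interpolate it with the size bound of Proposition~\ref{6a1}. Since $N$ is arbitrary, any loss in $N$ and any halving of the Hölder exponent can be absorbed by renaming $\delta$ and $N$. Throughout, $h_s(x)=(4\pi s)^{-n/2}e^{-|x|^2/4s}$ denotes the classical heat kernel.

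\textbf{Step 1 (Hölder bound without decay).} We start from the Duhamel (perturbation) formula
\[
k_s^L(x,y)=h_s(x-y)-\int_0^s\!\!\int_{\Rn} h_{s-t}(x-z)V(z)k_t^L(z,y)\,dz\,dt .
\]
The difference of the first term in $x$ is bounded by $C\frac{|x-x'|}{\sqrt s}s^{-n/2}e^{-c|x-y|^2/s}$ by the mean value theorem, since $|x-x'|<\sqrt s$.

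For the integral term we write the difference as
\[
\int_0^s\!\!\int \bigl(h_{s-t}(x-z)-h_{s-t}(x'-z)\bigr)V(z)k_t^L(z,y)\,dz\,dt ,
\]
bound $k_t^L$ by \eqref{6a}, and split the time integral into $t<s/2$ and $t>s/2$.
\begin{itemize}
\item For $t<s/2$, the kernel $h_{s-t}$ is smooth at scale $\sqrt s$, so we gain a factor $|x-x'|/\sqrt s$ directly.
\item For $t>s/2$, near $s$ we use $|h_{s-t}(x-z)-h_{s-t}(x'-z)|\le C(|x-x'|/\sqrt{s-t})^{\delta}\,\bigl(h_{c(s-t)}(x-z)+h_{c(s-t)}(x'-z)\bigr)$ with a small $\delta$.
\end{itemize}
In both ranges the remaining space–time integrals of $V$ against Gaussians are controlled by the Kato-type estimate implied by $V\in{\rm RH}_q$, $q>n/2$:
\[
\int_{B(x,r)}\frac{V(z)}{|x-z|^{n-2}}\,dz\le C\Bigl(\frac{r}{\rho(x)}\Bigr)^{\delta_0}\quad (r\le\rho(x)),
\]
together with polynomial growth of this quantity for $r>\rho(x)$. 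This growth is harmless because it is beaten by the Gaussian factor, after shrinking $A$. The output is
\[
|k_s^L(x,y)-k_s^L(x',y)|\le C\Bigl(\frac{|x-x'|}{\sqrt s}\Bigr)^{\delta}s^{-n/2}e^{-c|x-y|^2/s}\Bigl(1+\frac{\sqrt s}{\rho(x)}\Bigr)^{m}
\]
for some fixed $m$. I expect this step to be the main obstacle. The difficulty is handling the singular region $t\to s$, $z\to x$ with only the integrability of $V$ at hand. This is exactly where $q>n/2$ (via the exponent $\delta_0$) is needed, and why the Hölder exponent $\delta$ must be taken small, namely $\delta<\delta_0$.

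\textbf{Step 2 (trivial bound with decay).} By Proposition~\ref{6a1} we have
\[
|k_s^L(x,y)-k_s^L(x',y)|\le k_s^L(x,y)+k_s^L(x',y).
\]
Since $|x-x'|<\sqrt s$, we get $e^{-A|x'-y|^2/s}\lesssim e^{-A'|x-y|^2/s}$. By Proposition~\ref{p2} we have $\rho(x')\le C\rho(x)(1+\sqrt s/\rho(x))^{k_0/(k_0+1)}$, hence
\[
1+\frac{\sqrt s}{\rho(x')}\gtrsim\Bigl(1+\frac{\sqrt s}{\rho(x)}\Bigr)^{1/(k_0+1)} .
\]
So the term $k_s^L(x',y)$ also enjoys the factor $(1+\sqrt s/\rho(x)+\sqrt s/\rho(y))^{-N/(k_0+1)}$, and the right-hand side is $\le C_N s^{-n/2}e^{-A'|x-y|^2/s}(1+\sqrt s/\rho(x)+\sqrt s/\rho(y))^{-N}$ for every $N$.

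\textbf{Step 3 (interpolation).} We use $\min(a,b)\le a^{1/2}b^{1/2}$ to take the geometric mean of the bounds from Steps 1 and 2. This yields the Hölder factor $(|x-x'|/\sqrt s)^{\delta/2}$, the Gaussian $e^{-c'|x-y|^2/s}$, and the factor $(1+\sqrt s/\rho(x)+\sqrt s/\rho(y))^{(m-N)/2}$. Because $N$ is arbitrary, choosing $N$ large absorbs the polynomial loss $m$, and this gives the claimed estimate with $\delta/2$ in place of $\delta$.
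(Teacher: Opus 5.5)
The paper gives no proof of this proposition; it quotes it from \cite{DZ2003ColloqMath}, so there is no in-paper argument to compare yours with. Your proposal is correct on its merits, and your steps are not just plausible but go through. Step 1 gets a Hölder bound with at most polynomial loss from the perturbation formula, with the free kernel placed on the variable being differenced. Step 2 gives the trivial size bound with arbitrary decay from Proposition~\ref{6a1}, after comparing $\rho(x')$ with $\rho(x)$ via Proposition~\ref{p2}. Step 3 takes the geometric mean, which works because $N$ is arbitrary while $A$ and $\delta$ stay independent of $N$.

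A few points in Step 1 need tidying.
\begin{enumerate}
\item In the range $t<s/2$ the singularity sits at $t\to0$, $z\to y$. There the Kato-type bound is applied at $y$, and the loss is $(1+\sqrt{s}/\rho(y))^{m}$ rather than $(1+\sqrt{s}/\rho(x))^{m}$. This is harmless: Step 3 absorbs $(1+\sqrt{s}/\rho(x)+\sqrt{s}/\rho(y))^{m}$ equally well. Alternatively, you can convert it using Proposition~\ref{p2} and the Gaussian in $|x-y|^2/s$.
\item In the range $t>s/2$, the $x'$-centred Gaussian needs the Kato bound at $x'$. Convert it to $\rho(x)$ as in your Step 2.
\item The estimate you actually need in the range $t>s/2$ is $\int_{B(x,r)}V(z)|x-z|^{2-n-\delta}\,dz\lesssim r^{-\delta}(r/\rho(x))^{\delta_0}$ for $r\le\rho(x)$. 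It follows from your Kato bound by summing over dyadic annuli, and the sum converges exactly when $\delta<\delta_0=2-n/q$.
\item Writing the Kato quantity with the Newtonian kernel $|x-z|^{2-n}$ presupposes $n\ge3$. For $n\le2$, work directly with $\int_0^{r^2}\!\int t^{-n/2}e^{-c|x-z|^2/t}V(z)\,dz\,dt$.
\end{enumerate}
With these points in place, the statement holds with Hölder exponent $\delta/2$, for any fixed $\delta<\min\{1,2-n/q\}$.
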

 
\begin{proposition}[\cite{DZ2002Book}]\label{6a3}  Let $k_s(x,y)$ denote the kernel of $e^{s\Delta}$. Then there exist  positive constants $C,\delta$  such that
$$\left|k_s^L(x,y)-k_s(x,y)\right|\leq Cs^{-\frac{n}{2}}e^{-A\frac{|x-y|^2}{s}}\min\left\{\left(\cfrac{\sqrt{s}}{\rho(x)}\right)^\delta,\,\,\left(\cfrac{\sqrt{s}}{\rho(y)}\right)^\delta\right\}.$$
\end{proposition}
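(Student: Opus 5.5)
The statement to prove is Proposition \ref{6a3}. The plan is to compare the two semigroups through the Duhamel (perturbation) formula and to bound the resulting term using the Gaussian bound \eqref{6a} together with a smallness property of $V$ at scales below $\rho$.

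\textbf{Reductions.} Since $V\ge 0$, the Trotter/Feynman--Kac representation gives
\[
0\le k_s(x,y)-k_s^L(x,y)=\int_0^s\!\!\int_{\mathbb R^n} k_{s-t}(x,z)\,V(z)\,k^L_t(z,y)\,dz\,dt .
\]
Both kernels are symmetric in $(x,y)$. It therefore suffices to prove the bound with the single factor $(\sqrt s/\rho(x))^\delta$. Applying it to $(y,x)$ gives the factor $(\sqrt s/\rho(y))^\delta$, and the two together give the minimum.

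If $\sqrt s\ge \rho(x)$, the claim follows from the trivial bound $|k_s^L-k_s|\le k_s\le Cs^{-n/2}e^{-A|x-y|^2/s}$, with $A$ taken no larger than in \eqref{6a}. So we may assume $\sqrt s<\rho(x)$.

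\textbf{Splitting the Gaussians.} Insert \eqref{6a} into the Duhamel integral and use the elementary inequality, valid with a smaller constant $c$,
\[
(s-t)^{-\frac n2}e^{-\frac{|x-z|^2}{s-t}}\,t^{-\frac n2}e^{-\frac{|z-y|^2}{t}}\lesssim s^{-\frac n2}e^{-c\frac{|x-y|^2}{s}}\Bigl[(s-t)^{-\frac n2}e^{-c\frac{|x-z|^2}{s-t}}+t^{-\frac n2}e^{-c\frac{|z-y|^2}{t}}\Bigr].
\]
To prove it, split according to whether $t\ge s/2$ or $t<s/2$, and use $|x-y|\le|x-z|+|z-y|$. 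This reduces the problem to the key estimate: for $w\in\{x,y\}$,
\begin{equation*}
I_w(s):=\int_0^s\!\!\int_{\mathbb R^n}\tau^{-\frac n2}e^{-c\frac{|w-z|^2}{\tau}}V(z)\,dz\,d\tau\lesssim\Bigl(\frac{\sqrt s}{\rho(w)}\Bigr)^{\delta}\qquad\text{when }\sqrt s\le\rho(w).
\end{equation*}

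\textbf{Key estimate.} Use Shen's lemma, which follows from ${\rm RH}_q$ and the doubling property of $V\,dx$: with $\delta_0=2-n/q>0$,
\[
r^{2-n}\int_{B(w,r)}V\lesssim (r/\rho(w))^{\delta_0}\quad (r\le\rho(w)),\qquad r^{2-n}\int_{B(w,r)}V\lesssim (r/\rho(w))^{k_1}\quad (r\ge\rho(w))
\]
for some $k_1$. Decompose $\mathbb R^n$ into the annuli $|w-z|\sim2^k\sqrt\tau$. The inner integral is then bounded by
\[
\tau^{-1}\sum_k e^{-c4^k}\,2^{k(n-2)}\,\max\bigl\{(2^k\sqrt\tau/\rho(w))^{\delta_0},(2^k\sqrt\tau/\rho(w))^{k_1}\bigr\}\lesssim\tau^{-1}(\sqrt\tau/\rho(w))^{\delta_0},
\]
using $\sqrt\tau\le\rho(w)$ and the fact that the Gaussian factor absorbs all polynomial growth in $2^k$. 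Integrating $\tau^{-1+\delta_0/2}$ over $(0,s)$ gives $(\sqrt s/\rho(w))^{\delta_0}$, so the key estimate holds with $\delta=\delta_0$.

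\textbf{Main obstacle: the $w=y$ term.} We need the factor $\rho(x)$, but the second bracket produces $\rho(y)$, and we do not know that $\sqrt s\le\rho(y)$.
\begin{itemize}[leftmargin=*]
\item If $|x-y|\le\rho(x)$, Proposition \ref{p2} gives $\rho(y)\approx\rho(x)$, and the key estimate applies directly.
\item Otherwise Proposition \ref{p2} gives $\rho(x)/\rho(y)\lesssim(1+|x-y|/\rho(x))^{k_0}$. We then redo the key estimate with the scale comparison made against $\rho(x)$; equivalently, we bound $\int_{B(y,r)}V$ via balls centred at $x$ of radius $r+|x-y|$. This loses a factor $(1+|x-y|/\sqrt s)^{M}$.
\end{itemize}
That loss is absorbed by sacrificing part of $e^{-c|x-y|^2/s}$, which is why the final exponent $A$ is smaller than in \eqref{6a}. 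Carrying out this bookkeeping, so that all polynomial losses from Proposition \ref{p2} are dominated by the Gaussian factor, is the only delicate point of the proof.
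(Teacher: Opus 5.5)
The paper gives no proof of this statement; it quotes it from \cite{DZ2002Book}. Your argument is the standard perturbation proof of that result and is correct in outline: the Duhamel formula with $0\le k_s-k_s^L$, splitting the product of Gaussians at $t=s/2$, and Shen's bound $r^{2-n}\int_{B(w,r)}V\lesssim (r/\rho(w))^{2-n/q}$ for $r\le\rho(w)$, which gives $I_w(s)\lesssim(\sqrt s/\rho(w))^{\delta_0}$.

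One step needs fixing: the ``equivalently'' in the last part is not equivalent, and taken literally it fails.

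\begin{itemize}
\item Bounding $\int_{B(y,r)}V$ by $\int_{B(x,r+|x-y|)}V$ at every annular scale $r=2^k\sqrt\tau$ discards the small-scale decay. For $r\ll|x-y|$ it replaces a quantity of size $r^{n-2}(r/\rho(y))^{\delta_0}$ by the $\tau$-independent number $\int_{B(x,|x-y|)}V$.
\item The resulting upper bound for the inner integral is then of order $\tau^{-n/2}$ as $\tau\to0$.
\item So the loss is governed by $|x-y|/\sqrt\tau$, not $|x-y|/\sqrt s$, and for $n\ge 3$ the bound for the $\tau$-integral is infinite.
\end{itemize}

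The clean way is to keep the centre $y$ throughout and compare $\rho(y)$ with $\rho(x)$ only at the end.

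\begin{itemize}
\item Run your annular computation without assuming $\sqrt s\le\rho(y)$, taking $k_1\ge\delta_0$. This gives $I_y(s)\lesssim\max\{u^{\delta_0},u^{k_1}\}$ with $u=\sqrt s/\rho(y)$.
\item Proposition \ref{p2} together with $\sqrt s<\rho(x)$ gives $u\lesssim(\sqrt s/\rho(x))(1+|x-y|/\sqrt s)^{k_0}$.
\item Hence $I_y(s)\lesssim(\sqrt s/\rho(x))^{\delta_0}(1+|x-y|/\sqrt s)^{k_0k_1}$. This polynomial factor is absorbed by halving the constant in $e^{-c|x-y|^2/s}$.
\end{itemize}

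This handles both of your cases, $|x-y|\le\rho(x)$ and $|x-y|>\rho(x)$, at once.
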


  We also need some estimates for $Q_s^L(x,y)$, where
$Q_s^L f(x) := s \frac{d}{ds} T_s^L f(x)$.
		
		\begin{proposition}[\cite{DGMTZ2005MathZ}]\label{p1}
			There exist positive constants $A, \delta$ such that for every $N>0$, there is a constant $C_N>0$ so that 
           \begin{enumerate}[label=(\roman*), ref=\roman*]
               \item $\left|Q_s^L(x,y)\right|\leq C_N s^{-\frac{n}{2}}e^{-A\frac{|x-y|^2}{s}}\left(1+\cfrac{\sqrt{s}}{\rho(x)}+\cfrac{\sqrt{s}}{\rho(y)}\right)^{-N};$
               \item $\left|Q_s^L(x,y)-Q_s^L(x^\prime,y)\right|\leq C_N \left(\cfrac{|x-x^\prime|}{\sqrt{s}}\right)^{\delta}s^{-\frac{n}{2}}e^{-A\frac{|x-y|^2}{s}}\left(1+\cfrac{\sqrt{s}}{\rho(x)}+\cfrac{\sqrt{s}}{\rho(y)}\right)^{-N},$ \quad if\,\, $|x-x^\prime|\leq \sqrt{s};$
               \item$\left|\int_{\mathbb{R}^{n}}Q_s^L(x,y)\,dy\right|\leq C_N\left(1+\cfrac{\sqrt{s}}{\rho(x)}\right)^{-N}\left(\cfrac{\sqrt{s}}{\rho(x)}\right)^{\delta}.$
           \end{enumerate}		
           \end{proposition}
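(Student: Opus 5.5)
The statement just above is Proposition \ref{p1}, which collects the estimates for $Q_s^L(x,y)$; I will derive it from the semigroup kernel bounds in Propositions \ref{6a1}, \ref{6a2}, \ref{6a3}. The idea is to use the semigroup property to move the time derivative onto a heat kernel at half the time. Write
\[
Q_s^L(x,y)=s\,\partial_s k_s^L(x,y)=\int_{\Rn} 2\,(s/2)\,\partial_u k^L_u(x,z)\big|_{u=s/2}\,k^L_{s/2}(z,y)\,dz .
\]
To bound $s\partial_s k^L_s$, I will use analyticity of the semigroup. The Gaussian bound \eqref{6a}, together with the decay factor of Proposition \ref{6a1}, extends to complex times $\zeta$ with $|\arg\zeta|<\pi/4$. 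This follows from Davies--Gaffney / Phragm\'en--Lindel\"of arguments for the nonnegative self-adjoint operator $L$, and the decay factor is kept by interpolating the weighted estimate. Cauchy's integral formula on a circle of radius $cs$ around $s$ then gives
\[
|s\partial_s k_s^L(x,y)|\le C_N s^{-n/2}e^{-A|x-y|^2/s}\Bigl(1+\tfrac{\sqrt s}{\rho(x)}+\tfrac{\sqrt s}{\rho(y)}\Bigr)^{-N}
\]
with a smaller constant $A$. This is (i).

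For (ii), I write $Q_s^L(x,y)-Q_s^L(x',y)=\int \bigl(k^L_{s/2}(x,z)-k^L_{s/2}(x',z)\bigr)\,Q^L_{s/2}(z,y)\,dz\cdot c$, using the semigroup identity with the time derivative placed on the second factor. I apply Proposition \ref{6a2} to the difference and (i) to $Q^L_{s/2}$. The Gaussian convolution is handled in the usual way, and the $\rho$-factors are recombined using Proposition \ref{p2}: when $|x-z|\lesssim\sqrt s$ we have $\rho(z)$ comparable to $\rho(x)$ up to polynomial factors in $(1+|x-z|/\rho)$, and these factors are absorbed by the Gaussian. Since $N$ is arbitrary, this gives (ii).

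For (iii), note $\int Q_s^L(x,y)\,dy = s\partial_s\bigl(e^{-sL}1\bigr)(x)$, while $\int s\partial_s k_s(x,y)\,dy=0$ for the classical heat kernel. So I must control $s\partial_s$ of $\int (k^L_s-k_s)(x,y)\,dy$. I would again use the semigroup splitting: $\int Q_s^L(x,y)dy=\int\!\!\int Q^L_{s/2}(x,z)k^L_{s/2}(z,y)\,dz\,dy\cdot c$, and replace $k^L_{s/2}(z,\cdot)$ by $k_{s/2}(z,\cdot)$, whose integral is 1. The error is bounded by Proposition \ref{6a3}, giving a factor $(\sqrt s/\rho(z))^\delta$. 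The main term is $\int Q^L_{s/2}(x,z)\,dz$, which is the same quantity at half the time, so a pure iteration would be circular. Instead I will work directly: $\int (k_s^L-k_s)(x,y)\,dy$ is holomorphic in $s$ in a sector with bound $C(|\zeta|^{1/2}/\rho(x))^{\delta}$, from the complex-time version of Proposition \ref{6a3}. Cauchy's formula then transfers this bound to $s\partial_s$, giving $(\sqrt s/\rho(x))^\delta$. For large $s$, $\sqrt s\ge\rho(x)$, I use (i) directly: integrating in $y$ gives $C_N(1+\sqrt s/\rho(x))^{-N}$, which after adjusting $N$ dominates the claimed bound. Combining the two regimes gives (iii).

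The main obstacle is justifying the complex-time (sector) versions of Propositions \ref{6a1} and \ref{6a3} with the $\rho$-decay retained. I would first try the Phragm\'en--Lindel\"of interpolation between the real-time weighted bound and the trivial $L^2$ contractivity on the sector.
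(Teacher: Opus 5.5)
The paper does not prove this proposition; it quotes it from \cite{DGMTZ2005MathZ}. So there is no internal proof to compare with, and your proposal has to stand on its own.

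\textbf{Parts (i) and (ii) are sound.} The obstacle you flag for (i) is milder than you think. Your Phragm\'en--Lindel\"of plus $L^2$-contractivity idea is exactly Davies' argument for the \emph{unweighted} sectorial Gaussian bound, and that is all you need: the $\rho$-decay can come from real-time factors. Your two-factor split $Q_s^L=2Q^L_{s/2}e^{-\frac s2L}$ only gives decay in $\rho(y)$. Getting decay in $\rho(x)$ from it would need the weighted bound for $Q^L_{s/2}$, which is circular. The three-factor split
\[
Q_s^L=2\,e^{-\frac s4L}\,Q^L_{s/2}\,e^{-\frac s4L}
\]
avoids this. Use the unweighted Gaussian bound for $Q^L_{s/2}$ (Davies plus Cauchy) and Proposition~\ref{6a1} for the two outer kernels; this gives (i) with no weighted interpolation.

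Two small points in (ii):
\begin{itemize}
\item Proposition~\ref{6a2} at time $s/2$ only covers $|x-x'|<\sqrt{s/2}$. For $\sqrt{s/2}\le|x-x'|\le\sqrt s$, use (i) at both $x$ and $x'$. Combine this with $|x-y|^2\le 2|x'-y|^2+2s$ and, from Proposition~\ref{p2}, $1+\sqrt s/\rho(x')\gtrsim(1+\sqrt s/\rho(x))^{1/(k_0+1)}$; this costs only a reduction of $N$.
\item The recombination of $\rho(z)$ you describe is unnecessary. Proposition~\ref{6a2} already carries $\rho(x)$, and (i) carries $\rho(y)$.
\end{itemize}

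\textbf{Part (iii) has a gap as written.} It rests entirely on a complex-time version of Proposition~\ref{6a3}, which is not among your tools and which you do not prove. Interpolating against $L^2$ contractivity does not directly give it. Here are two ways to close it.

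First repair: you only need the scalar function $u(\zeta)=e^{-\zeta L}1(x)-1$.
\begin{itemize}
\item It is holomorphic and bounded on $|\arg\zeta|\le\theta<\pi/2$, by the sectorial Gaussian bound.
\item On the real axis, $|u(t)|\le C(\sqrt t/\rho(x))^{\delta}$ for all $t>0$, by Proposition~\ref{6a3} together with boundedness.
\item Since $\arg\zeta\cdot\log|\zeta|$ is harmonic, Phragm\'en--Lindel\"of on $0<\arg\zeta<\theta$ gives $|u(\zeta)|\le C(|\zeta|^{1/2}/\rho(x))^{\delta(1-|\arg\zeta|/\theta)}$.
\item Cauchy's formula then yields (iii) with a smaller $\delta$.
\end{itemize}

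Second repair, which avoids complex time entirely: integrate the Duhamel identity
\[
k_s(x,y)-k_s^L(x,y)=\int_0^s\!\!\int k^L_{s-r}(x,z)V(z)k_r(z,y)\,dz\,dr
\]
in $y$. This gives $1-e^{-sL}1(x)=\int_0^s e^{-uL}V(x)\,du$, hence
\[
\int_{\Rn}Q_s^L(x,y)\,dy=-s\int_{\Rn}k_s^L(x,y)V(y)\,dy .
\]
Now apply Proposition~\ref{6a1} and a dyadic decomposition. Combine these with Shen's bounds: $r^{2-n}\int_{B(x,r)}V\lesssim(r/\rho(x))^{2-n/q}$ for $r\le\rho(x)$, and polynomial growth in $r/\rho(x)$ for $r>\rho(x)$. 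This gives (iii) directly with $\delta=2-n/q$.
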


\subsection{Equivalent characterizations and properties of ${\rm CMO}_L$}

 We note that in \cite{SW2022JGeomAnal}, the authors also proved that the conditions $\gamma_2(f)=0$ and $\gamma_4(f)=0$ in Proposition \ref{t1} above can be removed. In this paper, to streamline the proof of Theorem \ref{th6},  we will provide several alternative equivalent characterizations of   ${\rm CMO}_L$,  with minor adjustments to the conditions on  $\gamma_3(f)=0$ and $\gamma_5(f)=0$.  
		\begin{lemma}		    
	\label{t2}
			If $f\in {\rm{BMO}}_L$, then $f\in {\rm CMO}_L$ if and only if,  for any $\varepsilon>0$, there exist positive constants $\sigma\ll 1$, $ R\gg 1$, and $ M\gg1$ such that
\begin{align}
			\sup_{B:\,r_{B}\leq\sigma}\left(|B|^{-1}\int_{B}|f(x)-f_{B}|^{2}\,dx\right)^{1/2}&<\varepsilon,\label{g44}\\
			\sup_{B:\,r_{B}\geq R}\left(|B|^{-1}\int_{B}|f(x)-f_{B}|^{2}\,dx\right)^{1/2}&<\varepsilon,\label{g55}\\
			\sup_{B:\,|x_B|>M}\left(|B|^{-1}\int_{B}|f(x)-f_{B}|^{2}\,dx\right)^{1/2}&<\varepsilon,\label{g66}
		\\
\sup_{B:\,r_B\geq R,\,r_{B}\geq\rho(x_{B})}\left(|B|^{-1}\int_{B}|f(x)|^{2}\,dx\right)^{1/2}&<\varepsilon,\label{g77}
		\\
\sup_{B:\,|x_B|\geq M,\,r_{B}\geq\rho(x_{B})}\left(|B|^{-1}\int_{B}|f(x)|^{2}\,dx\right)^{1/2}&<\varepsilon.\label{g88}
		\end{align}  

  \end{lemma}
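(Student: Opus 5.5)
The plan is to reduce Lemma \ref{t2} to Proposition \ref{t1}, whose conditions $\gamma_1,\dots,\gamma_5$ differ from \eqref{g44}--\eqref{g88} only in how ``far from the origin'' is measured. Conditions \eqref{g44}, \eqref{g55}, \eqref{g77} are just $\gamma_1(f)=0$, $\gamma_2(f)=0$, $\gamma_4(f)=0$ written with $\varepsilon$'s, since each supremum is monotone in $a$. So the whole content is to compare \eqref{g66} with $\gamma_3(f)=0$, and \eqref{g88} with $\gamma_5(f)=0$.

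\emph{Sufficiency.} Suppose \eqref{g44}--\eqref{g88} hold. If $B\subseteq (B(0,a))^c$, then $|x_B|\ge a+r_B>a$. Hence for $a\ge M$ the supremum defining $\gamma_3$ is dominated by the left side of \eqref{g66}, and similarly for $\gamma_5$ via \eqref{g88}. Therefore all $\gamma_i(f)=0$, and Proposition \ref{t1} gives $f\in{\rm CMO}_L$.

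\emph{Necessity.} Let $f\in{\rm CMO}_L$. Proposition \ref{t1} yields \eqref{g44}, \eqref{g55}, \eqref{g77} and $\gamma_3(f)=\gamma_5(f)=0$. Fix $\varepsilon$, and take $\sigma$, $R$, and $a_0$ such that the $\gamma_3$- and $\gamma_5$-suprema with $a=a_0$ are below $\varepsilon$. Consider a ball $B$ with $|x_B|>M$, where $M$ is to be chosen.

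If $r_B\ge R$, then \eqref{g55} (respectively \eqref{g77}) already applies. If $r_B<R$ and $M\ge a_0+R$, then $B\subseteq (B(0,a_0))^c$, so the $\gamma_3$ (respectively $\gamma_5$) bound applies. Taking $M=a_0+R$ therefore gives \eqref{g66} and \eqref{g88}, the latter for balls with $r_B\ge\rho(x_B)$.

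The only subtlety is that in \eqref{g88} the ball must satisfy both $r_B\ge\rho(x_B)$ and one of the two regimes. Both cases are covered, since \eqref{g77} needs $r_B\ge R$ together with $r_B\ge\rho(x_B)$, which holds in the first case. No real obstacle is expected. The main point is the case split at $r_B=R$, which converts the ``center far away'' condition into ``ball far away'' by using the large-ball conditions for big radii.
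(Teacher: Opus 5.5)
Your proposal is correct and follows essentially the same route as the paper. For necessity, the paper also splits at $r_B=R$: it uses \eqref{g55} (resp.\ \eqref{g77}) when $r_B\ge R$, and the $\gamma_3$ (resp.\ $\gamma_5$) condition when $r_B<R$, with $M=2M'+2R$; your choice $M=a_0+R$ works equally well, even in the boundary case $|x_B|=M$ of \eqref{g88}. For sufficiency, the paper likewise uses that $B\subseteq (B(0,a))^c$ forces $|x_B|\ge a$.
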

		\begin{proof}
   Firstly, we restate Proposition \ref{t1} in terms of the $\varepsilon-\delta$ language:   $f\in {\rm CMO}_L(\IR^n)$ if and only if, for  every $\varepsilon>0$, there exist positive constants $\sigma\ll 1$, $R\gg 1$ and $M^\prime\gg 1$ such that the conditions 	\eqref{g44}, \eqref{g55}, \eqref{g77}	hold, along with 
 \begin{align}
 			\sup_{B:\,B\subseteq(B(0,M^\prime))^{c}}\left(|B|^{-1}\int_{B}|f(x)-f_{B}|^{2}\,dx\right)^{1/2}&<\varepsilon,\label{g6}\\
\sup_{B:\,B\subseteq(B(0,M^\prime))^{c},\,r_{B}\geq\rho(x_{B})}\left(|B|^{-1}\int_{B}|f(x)|^{2}\,dx\right)^{1/2}&<\varepsilon.\label{g8}
		\end{align}  	
        
Next, let us prove  the {\bf ``only if"} part of this proposition.  We will first show the implication:    $\eqref{g6}+\eqref{g55}\Rightarrow \eqref{g66}$. To this end, let $M$ be a sufficiently large positive number to be chosen later. We split the supremum as follows:
   \begin{align*}
    \sup_{|x_B|\geq M}\left(\fint_{B}|f(x)-f_B|^2\,dx\right)^{1/2}\leq\sup_{|x_B|\geq M\atop r_B\geq R}\left(\fint_{B}|f(x)-f_B|^2\,dx\right)^{1/2}+\sup_{|x_B|\geq M\atop r_B<R}\left(\fint_{B}|f(x)-f_B|^2\,dx\right)^{1/2}.
\end{align*}
By \eqref{g55}, 
\[
\sup_{|x_B|\geq M\atop r_B\geq R}\left(\fint_{B}|f(x)-f_B|^2\,dx\right)^{1/2}< \varepsilon.
\]
Now set $M=2M^\prime+2R$. If $x_B\geq M$ and $r_B<R$,  then  $B=B(x_B,r_B)\subset B(0,M^\prime)^c$. Hence, applying \eqref{g6} gives
\[
\sup_{|x_B|\geq M\atop r_B<R^\prime}\left(\fint_{B}|f-f_B|^2\,dx\right)^{1/2}< \varepsilon.
\]
Combining the two estimates yields \eqref{g66}. Similarly, we can prove: $\eqref{g8}+\eqref{g77}\Rightarrow \eqref{g88}$.

Lastly, the {\bf ``if"} part is easier:   since $B\subset B(0,M)^c$ implies $|x_B|\geq M$, the conclusion follows directly from the definitions. 	
		\end{proof}

The following lemma provides a useful property of ${\rm CMO}_L$, which will be applied in the proof of Lemma \ref{l4} and the   estimate \eqref{important estimate}.	
\begin{lemma}
    \label{12} If  $f\in {\rm CMO}_L(\mathbb{R}^n)$, then for any $\varepsilon>0$, there exists a constant $\sigma>0$, such that
			\begin{align}\label{ee-smallbig}
			    \sup\limits_{B:\,r_B\leq\sigma,\,r_B\geq\rho(x_B)}\bigg(\fint_B \left|f(y)\right|^2\,dy\bigg)^{1/2}< \varepsilon.
            \end{align}
		\end{lemma}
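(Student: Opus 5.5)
The plan is to approximate $f$ by a compactly supported smooth function, using Ky's characterization ${\rm CMO}_L(\Rn)=\overline{C_0^\infty(\Rn)}^{\|\cdot\|_{{\rm BMO}_L}}$. A geometric argument based on Proposition \ref{p2} will then show that the balls appearing in \eqref{ee-smallbig} stay away from the support of the approximant once $\sigma$ is small.

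\emph{Step 1: approximation.} Fix $\varepsilon>0$ and choose $g\in C_0^\infty(\Rn)$ with $\|f-g\|_{{\rm BMO}_L}<\varepsilon$. Pick $K\ge 1$ with $\operatorname{supp} g\subset B(0,K/2)$. By the remark after \eqref{BMOL-norm}, the $L^1$ averages in the ${\rm BMO}_L$ norm may be replaced by $L^2$ averages; this is the John--Nirenberg type equivalence. So there is a constant $C$, independent of $f$, $g$ and $B$, such that
\begin{align*}
\bigg(\fint_B|f(y)-g(y)|^2\,dy\bigg)^{1/2}\le C\|f-g\|_{{\rm BMO}_L}<C\varepsilon \quad\text{for every ball } B \text{ with } r_B\ge \rho(x_B).
\end{align*}
The boundary case $r_B=\rho(x_B)$ is harmless. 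It follows from $r_B>\rho(x_B)$ by a limiting argument, or by comparing with the ball $2B$, which satisfies $2r_B>\rho(x_B)$.

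\emph{Step 2: lower bound for $\rho$ on a compact set.} By the left inequality of Proposition \ref{p2} with $x=0$,
\begin{align*}
\rho(y)\ge C^{-1}\rho(0)\Big(1+\frac{|y|}{\rho(0)}\Big)^{-k_0}\ge c_K>0 \quad\text{for all } |y|\le K.
\end{align*}
We choose $\sigma<\min\{c_K,K/2\}$.

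\emph{Step 3: localization.} Let $B$ be a ball with $r_B\le\sigma$ and $r_B\ge\rho(x_B)$. Then $\rho(x_B)\le\sigma<c_K$, which forces $|x_B|>K$. Since also $r_B\le\sigma<K/2$, we get $B\subset B(0,K/2)^c$, so $g\equiv0$ on $B$. Hence
\begin{align*}
\bigg(\fint_B|f|^2\bigg)^{1/2}=\bigg(\fint_B|f-g|^2\bigg)^{1/2}<C\varepsilon.
\end{align*}
Replacing $\varepsilon$ by $\varepsilon/C$ gives \eqref{ee-smallbig}.

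The only point needing care is the $L^2$ control of the second part of the ${\rm BMO}_L$ norm in Step 1, that is, the $L^2$-averages of $|f|$ over balls with $r_B\ge\rho(x_B)$. I would cite it as the John--Nirenberg equivalence mentioned after \eqref{BMOL-norm}. If a direct proof were needed, I would bound $\fint_B|f|^2$ by $2\fint_B|f-f_B|^2+2|f_B|^2$. The first term is controlled by classical John--Nirenberg. The second is at most $\big(\fint_B|f|\big)^2$, which is controlled by the norm itself.
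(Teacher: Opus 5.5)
\textbf{Verdict.} Your proof is correct, but it follows a genuinely different route from the paper's.

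\textbf{The paper's route.} The paper never approximates $f$. It splits the admissible balls according to whether $|x_B|\ge M$ or $|x_B|<M$. Balls with $|x_B|\ge M$ are handled by \eqref{g88}, that is, by the Song--Wu characterization (Proposition~\ref{t1}, repackaged as Lemma~\ref{t2}). For $|x_B|<M$, it uses Proposition~\ref{p2} to get $r_B\ge\rho(x_B)\ge\tilde C_0>0$, and then invokes absolute continuity of $\int|f|^2$ on $B(0,2M)$.

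\textbf{Your route.} You use Ky's density theorem ${\rm CMO}_L=\overline{C_0^\infty}^{\|\cdot\|_{{\rm BMO}_L}}$, quoted in the introduction. You combine it with the elementary fact that $L^2$ averages of $|h|$ over balls with $r_B\ge\rho(x_B)$ are bounded by $C\|h\|_{{\rm BMO}_L}$. Your justification of this fact is fine: classical John--Nirenberg plus $|h_B|\le\fint_B|h|$. Passing to $2B$ correctly handles the boundary case $r_B=\rho(x_B)$.

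\textbf{Common ingredient.} The lower bound for $\rho$ on compact sets plays the same role in both proofs, but you use it more economically. Choosing $\sigma$ below that bound makes the set of small admissible balls near the origin empty. This also shows the paper's absolute-continuity step is superfluous: taking $\sigma<\tilde C_0$ there would already suffice.

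\textbf{What each buys.} Your argument is shorter and more transparent. It reduces the lemma to the statement being trivially true for $C_0^\infty$ functions and stable under ${\rm BMO}_L$-perturbations, and it needs nothing from Lemma~\ref{t2}. The paper's argument stays inside the intrinsic $\gamma_i$-type characterization on which the rest of the paper is built (Proposition~\ref{cor:equivalent character}, Lemma~\ref{l4}), so no approximation argument has to be imported. Both ultimately rest on one deep external description of ${\rm CMO}_L$: Ky's density theorem in your case, the Song--Wu characterization in the paper's.
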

  
		\begin{proof}
Let $\varepsilon>0$ be arbitrary.  By \eqref{g88}, there exists a constant $M\gg 1$, such that for all $B=B(x_B,r_B)$  with $|x_B|\geq M$ and $r_B\geq \rho(x_B)$, there holds
			\begin{align}\label{add1}
				\left(|B|^{-1}\int_{B}|f(y)|^{2}\, dy\right)^{1/2}<\varepsilon.
			\end{align}

It remains to  handle the case when  $|x_B|<M$ and $r_B\geq \rho(x_B)$. For such balls,  Proposition \ref{p2} yields 
			\begin{align*}
				r_B\geq\rho(x_B)\geq C^{-1}\rho(0)\left(1+\frac{|x_B|}{\rho(0)}\right)^{-k_0}				\geq C^{-1} \rho(0)^{1+k_0}(\rho(0)+M)^{-k_0}=:\tilde{C}_0.
			\end{align*}
			
	Since $f\in {\rm CMO}_L(\Rn)$, it follows that $f\in L^2_{Loc}(\Rn)$, and in particular $f\in L^2(B(0,2M))$. By the absolute continuity of the integral,  there exists  $\eta>0$, such that for every measurable set $E\subset B(0,2M)$, with $|E|<\tau$,  we have 
			\[
			\int_{E}|f(y)|^2 \,dy <\tilde{C}_0^n \cdot \varepsilon^2.
			\]
 Now set $\sigma=\min{\{(\tau/c_n)^{1/n},1\}}$, where $c_n$ denotes the volume of the unit ball in $\Rn$. Then, whenever $\rho(x_B)\leq r_B<\sigma$ and $|x_B|<M$, we have $B\subset B(0,2M)$ with $|B|<\tau$. Hence

 $$\fint_{B}|f(y)|^2\,dy\leq    \tilde{C}_0^n\varepsilon^2 \cdot (c_n \tilde{C}_0^n)^{-1}\lesssim \varepsilon^2.
 $$
This, together with \eqref{add1},  completes the proof of the proposition.
		\end{proof}	

The following proposition is a direct consequence of   Lemmas \ref{t2} and \ref{12}.
\begin{proposition}\label{cor:equivalent character}
If $f\in {{\rm BMO}}_L$, then $f\in {\rm CMO}_L$ if and only if,  for any $\varepsilon>0$, there exist positive constants $\sigma\ll 1$, $ R\gg 1$, and $ M\gg1$ such that $f$ satisfies \eqref{g44}--\eqref{g88}, and \eqref{ee-smallbig}.
\end{proposition}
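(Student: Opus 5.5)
The plan is to combine Lemma \ref{t2} and Lemma \ref{12}, proving the two directions separately.

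\emph{The ``only if'' direction.} Let $f\in {\rm CMO}_L$ and fix $\varepsilon>0$.
\begin{enumerate}
\item Lemma \ref{t2} supplies constants $\sigma_1\ll1$, $R\gg1$ and $M\gg1$ for which \eqref{g44}--\eqref{g88} hold.
\item Lemma \ref{12}, applied with the same $\varepsilon$, gives a constant $\sigma_2>0$ for which \eqref{ee-smallbig} holds.
\item Set $\sigma=\min\{\sigma_1,\sigma_2\}$.
\end{enumerate}
Shrinking $\sigma$ only shrinks the families of balls over which the suprema in \eqref{g44} and \eqref{ee-smallbig} are taken. Both estimates therefore remain valid with this common $\sigma$. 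The constants $R$ and $M$ are left unchanged, so \eqref{g55}--\eqref{g88} continue to hold. Hence all six conditions hold simultaneously with a single triple $(\sigma,R,M)$.

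\emph{The ``if'' direction.} Suppose $f\in{\rm BMO}_L$ and that, for every $\varepsilon>0$, there are constants for which \eqref{g44}--\eqref{g88} and \eqref{ee-smallbig} hold. In particular \eqref{g44}--\eqref{g88} hold for every $\varepsilon$. We simply discard the extra condition \eqref{ee-smallbig}, and the ``if'' part of Lemma \ref{t2} then gives $f\in{\rm CMO}_L$.

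There is essentially no obstacle here. The only point to check is the monotonicity used in the first direction: the suprema in \eqref{g44} and \eqref{ee-smallbig} are nondecreasing in $\sigma$. This is what allows a single $\sigma$ to serve both conditions.
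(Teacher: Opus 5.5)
Your proposal is correct and matches the paper, which simply states that this proposition is a direct consequence of Lemmas \ref{t2} and \ref{12}. Your choice $\sigma=\min\{\sigma_1,\sigma_2\}$, justified by the monotonicity of the suprema in $\sigma$, is exactly the bookkeeping the paper leaves implicit, and the converse direction follows from the ``if'' part of Lemma \ref{t2}.
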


  To  simplify the proof of Theorem \ref{th6}, we will provide a more concise  characterization of ${\rm CMO}_L(\IR^n)$.
\begin{proposition}\label{CMOL-new characterization}
   If $f\in {\rm BMO}_L$, then $f\in {\rm CMO}_L$ if and only if  for any $\varepsilon>0$, there exist positive constants $\sigma\ll 1$,  and $ M\gg1$ such that $f$ satisfies \eqref{g44}, \eqref{g66} and \eqref{g88}.
\end{proposition}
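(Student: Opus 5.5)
The \textbf{only if} direction is immediate from Lemma \ref{t2}: if $f\in{\rm CMO}_L$, then for every $\varepsilon>0$ all of \eqref{g44}--\eqref{g88} hold, and in particular \eqref{g44}, \eqref{g66} and \eqref{g88} do. So the real content is the \textbf{if} direction. There I would show that \eqref{g44}, \eqref{g66}, \eqref{g88} force \eqref{g55} and \eqref{g77}, after which Lemma \ref{t2} applies. (Alternatively, \eqref{g66} gives \eqref{g6}, since $B\subseteq B(0,a)^c$ implies $|x_B|\ge a$, and \eqref{g88} gives \eqref{g8}. Then the result of \cite{SW2022JGeomAnal} that $\gamma_2$ and $\gamma_4$ may be dropped from Proposition \ref{t1} finishes at once. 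I would give the direct argument anyway, to be self-contained.)

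Fix $\varepsilon>0$ and take $\sigma, M$ from \eqref{g44}, \eqref{g66}, \eqref{g88}. For \eqref{g77} and \eqref{g55}, let $B$ be a ball with $r_B\ge R$, where $R$ is to be chosen. If $|x_B|> M$, then \eqref{g66} gives the oscillation bound directly, and \eqref{g88} gives the $L^2$ bound when $r_B\ge\rho(x_B)$. So assume $|x_B|\le M$. By Proposition \ref{p2}, $\rho(x_B)\le C(\rho(0)+M)$, so for $R$ large we automatically have $r_B\ge\rho(x_B)$. Moreover $\fint_B|f-f_B|^2\le 4\fint_B|f|^2$. Hence both \eqref{g55} and \eqref{g77} reduce to the single claim
\[
\fint_B|f|^2\,dy\lesssim\varepsilon^2\quad\text{for all } B \text{ with } |x_B|\le M,\ r_B\ge R .
\]

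To prove the claim, I would split $B$ into $B\cap B(0,2M)$ and $B\setminus B(0,2M)$.
\begin{itemize}
\item \emph{Near part.} Since $f\in{\rm BMO}_L\subset L^2_{\rm loc}$, we have $|B|^{-1}\int_{B(0,2M)}|f|^2\le C_M R^{-n}<\varepsilon^2$ once $R$ is large.
\item \emph{Far part.} Use the standard covering of $\Rn$ by critical balls $B(x_j,\rho(x_j))$ with bounded overlap (a consequence of Proposition \ref{p2}). Keep only the balls meeting $B\setminus B(0,2M)$. For each such ball I want $|x_j|\ge M$ and $B(x_j,\rho(x_j))\subset cB$ for a fixed constant $c$.
\end{itemize}
The containment uses the sublinear growth $\rho(x)\lesssim \rho(0)^{1/(k_0+1)}|x|^{k_0/(k_0+1)}$ from Proposition \ref{p2}. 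This makes $\rho(x_j)\ll |x_j|$ for $|x_j|$ large; for $|x_j|$ of size about $M$, $\rho(x_j)\lesssim\rho(0)+M\ll R\le r_B$. Then \eqref{g88} applies to each $B(x_j,\rho(x_j))$, giving $\int_{B(x_j,\rho(x_j))}|f|^2<\varepsilon^2|B(x_j,\rho(x_j))|$. Summing with bounded overlap gives $\int_{B\setminus B(0,2M)}|f|^2\lesssim\varepsilon^2|cB|\approx\varepsilon^2|B|$, which proves the claim.

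I expect the far part to be the main obstacle. The delicate points are arranging that the selected critical balls lie in a fixed dilate $cB$ and have centres with $|x_j|\ge M$; the centres may need a slight shift using $2M$ versus $M$, for instance by enlarging $2M$ to $KM$ for a suitable constant $K$. This relies on the precise power $k_0/(k_0+1)<1$ in Proposition \ref{p2}, together with choosing $R\gg\rho(0)+M$.
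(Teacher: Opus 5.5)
Your proof is correct, but it proves the key claim ($\fint_B|f|^2\lesssim\varepsilon^2$ whenever $|x_B|\le M$ and $r_B\ge R$) by a heavier route than the paper; your reduction of \eqref{g55} and \eqref{g77} to that claim is the same as the paper's.

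The paper proves the claim with a single ball. It takes $R=(10+C)(\rho(0)+M)$, so that $B\supset B(0,M)$. It picks $\tilde x\in B\setminus B(0,M)$ and sets $\tilde r=3(C+1)r_B$. Then $B\subset B(\tilde x,\tilde r)$, $|\tilde x|\ge M$, and $\rho(\tilde x)\le C(\rho(0)+M+r_B)\le\tilde r$. So \eqref{g88} applies directly to $B(\tilde x,\tilde r)$, and doubling finishes the argument.

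The point you did not exploit is that \eqref{g88} places no upper restriction on the radius. A large ball re-centred just outside $B(0,M)$ can therefore swallow the whole region near the origin. This makes both your local $L^2$ estimate on $B\cap B(0,KM)$ and your covering argument unnecessary.

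Your argument does go through, though:
\begin{itemize}
\item Choosing $KM\ge(1+C)M+C\rho(0)$ forces $|x_j|\ge M$ for every critical ball meeting $B\setminus B(0,KM)$.
\item For a common point $y\in B\cap B(x_j,\rho(x_j))$ you have $\rho(x_j)\approx\rho(y)\le C(\rho(0)+|y|)\lesssim r_B$, which gives $B(x_j,\rho(x_j))\subset cB$.
\end{itemize}
So you need only the linear bound and the local comparability in Proposition \ref{p2}, not the sublinear exponent $k_0/(k_0+1)$.

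What your route costs is an external tool not contained in the paper: the Dziuba\'nski--Zienkiewicz covering by critical balls with bounded overlap. What it buys is a slightly stronger statement, since you use \eqref{g88} only on critical balls $B(x,\rho(x))$ with $|x|\ge M$, plus local square integrability.

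Your alternative, reducing directly to the result of \cite{SW2022JGeomAnal} that $\gamma_2$ and $\gamma_4$ may be dropped from Proposition \ref{t1}, is also valid. The paper instead proves this reduction by hand.
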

\begin{proof}
 Necessity follows from Proposition \ref{cor:equivalent character}; it remains to prove sufficiency.
  
   Assume that $f\in {\rm BMO}_L$ and for any $\varepsilon>0$, there exist positive constants $\sigma\ll 1$,  and $ M\gg1$ such that $f$ satisfies \eqref{g44}, \eqref{g66} and \eqref{g88}.    By Proposition \ref{cor:equivalent character}, it is enough to show that, there exists a  constant  $ R\gg 1$  such that $f$ also satisfies $\eqref{g55}$ and $\eqref{g77}$. 

  Let us first verify $\eqref{g77}$.
Take $R=(10+C)\left(\rho(0)+M\right)$, where $C$ is as in Proposition \ref{p2}. Then
\begin{align*}
    \sup_{B:\,r_B\geq \max\{R,\,\rho(x_B)\}}\left(\fint_B|f(x)|^2\,dx\right)^{1/2}&\leq \sup_{B:\,|x_B|\geq M,\,r_B\geq \rho(x_B)}\left(\fint_B|f(x)|^2\,dx\right)^{1/2}+\sup_{B:\,|x_B|<M,\,r_B\geq R}\left(\fint_B|f(x)|^2\,dx\right)^{1/2}\\
    &=: I_1(f)+I_2(f).
\end{align*}

It follows from \eqref{g88} that $I_1(f)<\varepsilon$. 
Consider $I_2(f)$. Let  $|x_B|<M$ and $r_B\geq R$. Then $B(0,M)\subset B$. Let  $\tilde{x}\in B\backslash B(0,M)$ and $\tilde{r}=3(C+1)r_B$.  Then $B\subset B(\tilde{x},\tilde{r})$ and $|B(\tilde{x},\tilde{r})|=3^n(C+1)^n |B|$. By Proposition \ref{p2},
    $\rho(\tilde{x})\leq C(\rho(0)+|\tilde{x}|) \leq C(\rho(0)+r_B+M)\leq \tilde{r}$, we obtain
\begin{align*}
    \fint_B|f(x)|^2\,dx\leq \cfrac{|B(\tilde{x},\tilde{r})|}{|B|}\fint_{B(\tilde{x},\tilde{r})}|f(x)|^2\,dx\leq (C+1)^n3^n\fint_{B(\tilde{x},\tilde{r})}|f(x)|^2\,dx.
\end{align*}
Combining this with $\eqref{g88}$, along with the facts that $\tilde{r}\geq\rho(\tilde{x})$ and $|\tilde{x}|\geq M$, gives $I_2(f)\lesssim \varepsilon.$ 
Thus,  \eqref{g77} is proved.

Let $R$ be the same as above. We now prove $\eqref{g55}$. One has
\begin{align*}
     \sup_{r_B\geq R}\left(\fint_B|f(x)-f_B|^2\,dx\right)^{1/2}&\leq \sup_{|x_B|\geq M,\,r_B\geq R}\left(\fint_B|f(x)-f_B|^2\,dx\right)^{1/2}+2\sup_{|x_B|<M,\,r_B\geq R}\left(\fint_B|f(x)|^2\,dx\right)^\frac{1}{2}\\
    &=: II_1(f)+II_2(f).
\end{align*}
Observe that $II_2(f)=2I_2(f)\lesssim \varepsilon$, and from \eqref{g66},  $II_1(f)<\varepsilon$.   
\end{proof}	


		The following two lemmas, concerning  ${\rm BMO}$ and ${\rm BMO}_L$ functions respectively, are well known; for their proofs, see \cite{Stein1993book,DGMTZ2005MathZ}, respectively. 
		
   \begin{lemma}\label{F-S for BMO}
		Let $f\in {\rm BMO}(\mathbb{R}^n)$ and $\alpha>0$. Then
			\[
			r_B^\alpha \int_{(2B)^{c}}\cfrac{|f(y)-f_{B^*}|}{|y-x_B|^{n+\alpha}}\,dy\lesssim \|f\|_{{\rm BMO}}.
			\]
		\end{lemma}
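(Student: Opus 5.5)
We need to prove that for $f\in{\rm BMO}(\Rn)$, $\alpha>0$,
\[
r_B^\alpha\int_{(2B)^c}\frac{|f(y)-f_{B^*}|}{|y-x_B|^{n+\alpha}}\,dy\lesssim\|f\|_{\rm BMO},
\]
where $B^*$ is a fixed dilate of $B$ (say $B^*=2B$; any fixed dilate works, since averages over comparable balls differ by $\lesssim\|f\|_{\rm BMO}$). The plan is the standard dyadic annular decomposition.

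Write $B_k=2^kB=B(x_B,2^kr_B)$ for $k\ge1$, so that $(2B)^c=\bigcup_{k\ge1}(B_{k+1}\setminus B_k)$. On the annulus $B_{k+1}\setminus B_k$ we have $|y-x_B|\ge 2^kr_B$. Hence the $k$-th piece is bounded by
\[
r_B^\alpha(2^kr_B)^{-n-\alpha}\int_{B_{k+1}}|f-f_{B^*}|
\lesssim 2^{-k\alpha}\fint_{B_{k+1}}|f-f_{B^*}|.
\]

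The key step is the telescoping estimate
\[
|f_{B_{j+1}}-f_{B_j}|\le\fint_{B_j}|f-f_{B_{j+1}}|\le 2^n\fint_{B_{j+1}}|f-f_{B_{j+1}}|\le 2^n\|f\|_{\rm BMO}.
\]
Summing from $B^*$ up to $B_{k+1}$ gives $|f_{B_{k+1}}-f_{B^*}|\lesssim (k+1)\|f\|_{\rm BMO}$. Therefore
\[
\fint_{B_{k+1}}|f-f_{B^*}|\le \fint_{B_{k+1}}|f-f_{B_{k+1}}|+|f_{B_{k+1}}-f_{B^*}|\lesssim (k+1)\|f\|_{\rm BMO}.
\]

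Summing over $k$, the left-hand side of the claim is
\[
\lesssim\sum_{k\ge1}(k+1)2^{-k\alpha}\,\|f\|_{\rm BMO}\lesssim_\alpha\|f\|_{\rm BMO},
\]
since the series converges for every $\alpha>0$.

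The only mild point is the logarithmic growth of $|f_{B_{k+1}}-f_{B^*}|$ in $k$. It is absorbed by the geometric decay $2^{-k\alpha}$, and this is exactly where $\alpha>0$ is needed. If $B^*$ is not literally one of the $B_k$, one first compares $f_{B^*}$ with $f_{2B}$ using the same two-ball estimate, which costs only a constant depending on the dilation factor.
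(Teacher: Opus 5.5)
Your proof is correct. The paper gives no proof of its own; it cites Stein's book, where the estimate is proved by exactly your dyadic annular decomposition, with the telescoping bound $|f_{2^{k+1}B}-f_{B^*}|\lesssim (k+1)\|f\|_{{\rm BMO}}$ absorbed by the factor $2^{-k\alpha}$ — the same scheme the paper itself runs in the proof of Lemma~\ref{l4}. Your remark that $B^*$ may be any fixed dilate (the paper later takes $B^*=4B$) correctly handles the fact that $B^*$ is not yet defined where the lemma is stated.
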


        \begin{lemma}\label{l2}
			There exists $C>0$ such that, for any $f\in {\rm BMO}_L(\IR^n)$ and any ball $B(x,r)$ of \,$\mathbb{R}^n$ with $r\leq \rho(x)$, then
			\[
			\left|\cfrac{1}{\left|B(x,r)\right|}\int_{B(x, r)}f(y)\,dy \right|\leq C\left(1+\log \frac{\rho(x)}{r}\right)\|f\|_{{\rm BMO}_L}.
			\] 
		\end{lemma}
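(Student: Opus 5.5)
The plan is the standard telescoping argument along dyadic dilations of $B(x,r)$. We climb from $B(x,r)$ up to a ball whose radius exceeds $\rho(x)$, where the second term of the ${\rm BMO}_L$ norm in \eqref{BMOL-norm} controls the average of $|f|$ directly. Each step of the climb costs only a multiple of the ${\rm BMO}$ seminorm.

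\textbf{Choice of scales.} Fix $f\in{\rm BMO}_L(\IR^n)$ and a ball $B_0=B(x,r)$ with $r\leq\rho(x)$. Let $k\geq 0$ be the integer with $2^k r\leq \rho(x)<2^{k+1}r$, so that $k\leq \log_2(\rho(x)/r)$. Set $B_j=B(x,2^jr)$ for $0\leq j\leq k+1$. All these balls share the center $x$, so $\rho$ is evaluated only at the single point $x$ and Proposition \ref{p2} is not needed. The ball $B_{k+1}$ has radius $2^{k+1}r>\rho(x)$, so it is admissible in the second supremum of \eqref{BMOL-norm}. This gives
\[
|f_{B_{k+1}}|\leq \fint_{B_{k+1}}|f(y)|\,dy\leq \|f\|_{{\rm BMO}_L}.
\]

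\textbf{Telescoping step.} For each $0\leq j\leq k$ we have $B_j\subset B_{j+1}$ and $|B_{j+1}|=2^n|B_j|$. Hence
\[
|f_{B_j}-f_{B_{j+1}}|\leq \fint_{B_j}|f(y)-f_{B_{j+1}}|\,dy\leq 2^n\fint_{B_{j+1}}|f(y)-f_{B_{j+1}}|\,dy\leq 2^n\|f\|_{{\rm BMO}_L}.
\]
Summing over $j=0,\dots,k$ gives
\[
|f_{B_0}|\leq \sum_{j=0}^{k}|f_{B_j}-f_{B_{j+1}}|+|f_{B_{k+1}}|\leq \big(2^n(k+1)+1\big)\|f\|_{{\rm BMO}_L}.
\]
Since $k+1\leq 1+\log_2(\rho(x)/r)\lesssim 1+\log(\rho(x)/r)$, this is the claimed bound, with $C$ depending only on $n$.

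\textbf{Where care is needed.} There is no real obstacle. The only points to check are that the chosen top ball genuinely satisfies $r_B>\rho(x_B)$, which holds by the choice of $k$ and requires no strictness at the bottom, and the endpoint case $r=\rho(x)$. In that case $k=0$, the climb is the single step from $B(x,r)$ to $B(x,2r)$, and the bound is a constant, consistent with the formula since $\log(\rho(x)/r)=0$.
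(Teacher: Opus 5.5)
Your proof is correct and is essentially the standard one: the paper does not prove this lemma itself but defers to \cite{DGMTZ2005MathZ}, where it is proved by the same telescoping argument through the dilates $B(x,2^jr)$ up to the first one with radius exceeding $\rho(x)$. Your treatment of the strict condition $r_B>\rho(x_B)$ in the second supremum of \eqref{BMOL-norm} and of the endpoint case $r=\rho(x)$ is also correct.
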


We end this section with a useful lemma  on ${\rm CMO}_L(\Rn)$, which will be often employed in the proof of our main results.
		\begin{lemma}\label{l4}
		Let $f\in {\rm CMO}_L(\mathbb{R}^n)$ and $\alpha>0$. For any $\varepsilon>0$, there exist positive constants $\sigma_1\ll1$, $R_1\gg 1$ and $M_1\gg 1$ such that for every ball $B=B(x_B,r_B)$, whenever $r_B<\sigma_1$ or $r_B>R_1$ or $|x_B|>M_1$, we have 
			\[
			r_B^\alpha \int_{(B^{*})^{c}}\cfrac{|f(y)-f(B^*,V)|}{|y-x_B|^{n+\alpha}}\,dy\lesssim \varepsilon,
			\]
		\end{lemma}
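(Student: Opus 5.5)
Throughout I read $B^*=2B$ and $f(B^*,V)=f_{B^*}$ if $r_B<\rho(x_B)$ and $f(B^*,V)=0$ otherwise; this is the convention of \cite{DGMTZ2005MathZ}, and if the paper fixes a different constant multiple for $B^*$ only harmless constants change. The plan is a dyadic annular decomposition combined with the characterization of Proposition \ref{cor:equivalent character}. It is essential that every ball produced has the \emph{same center} $x_B$, which is why we use conditions \eqref{g66} and \eqref{g88} rather than $\gamma_3,\gamma_5$. Write $B_k=2^kB^*$ for $k\ge0$. On $B_k\setminus B_{k-1}$ we have $|y-x_B|\approx 2^kr_B$, so
\[
r_B^\alpha \int_{(B^{*})^{c}}\frac{|f(y)-f(B^*,V)|}{|y-x_B|^{n+\alpha}}\,dy\lesssim \sum_{k\ge1}2^{-k\alpha}\fint_{B_k}|f(y)-f(B^*,V)|\,dy .
\]

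\textbf{Case $r_B<\rho(x_B)$.} Here $f(B^*,V)=f_{B_0}$. We telescope:
\[
\fint_{B_k}|f-f_{B_0}|\le \sum_{j=0}^{k}2^n\,\mathcal O(B_j),
\qquad
\mathcal O(B_j):=\Bigl(\fint_{B_j}|f-f_{B_j}|^2\Bigr)^{1/2}.
\]
Interchanging the sums gives the bound $\sum_{j\ge0}2^{-j\alpha}\mathcal O(B_j)$, up to a constant depending on $\alpha$ and $n$.
\begin{itemize}
\item If $r_B>R_1:=R$, every $B_j$ has radius at least $R$, so \eqref{g55} makes each $\mathcal O(B_j)<\varepsilon$ and the sum is $\lesssim\varepsilon$.
\item If $|x_B|>M_1:=M$, each $B_j$ is centered at $x_B$, so \eqref{g66} gives the same conclusion.
\item If $r_B<\sigma_1$, choose $J$ with $2^{-J\alpha}\|f\|_{\rm BMO}<\varepsilon$ and set $\sigma_1=2^{-J-1}\sigma$. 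For $j\le J$ the radius of $B_j$ is at most $\sigma$, so \eqref{g44} applies. For $j>J$ we use the John--Nirenberg bound $\mathcal O(B_j)\lesssim\|f\|_{\rm BMO}$, and the geometric tail is then $\lesssim\varepsilon$.
\end{itemize}

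\textbf{Case $r_B\ge\rho(x_B)$.} Here $f(B^*,V)=0$, and we bound directly
\[
\sum_k2^{-k\alpha}\Bigl(\fint_{B_k}|f|^2\Bigr)^{1/2}.
\]
Every $B_k$ has center $x_B$ and radius $2^{k+1}r_B\ge\rho(x_B)$, so:
\begin{itemize}
\item if $r_B>R$, \eqref{g77} bounds each term by $\varepsilon$;
\item if $|x_B|>M$, \eqref{g88} does the same;
\item if $r_B<\sigma_1$, Lemma \ref{12} (i.e.\ \eqref{ee-smallbig}) handles the finitely many $k\le J$ with radius at most $\sigma$, and for $k>J$ we use $\fint_{B_k}|f|^2\lesssim\|f\|_{{\rm BMO}_L}^2$ (the $L^2$ version of \eqref{BMOL-norm}, valid since the radius is at least $\rho(x_B)$) together with the factor $2^{-k\alpha}$.
\end{itemize}
Taking $\sigma_1$, $R_1$, $M_1$ as the worst of the choices above finishes the proof.

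The main obstacle is the small-radius case with $r_B\ge\rho(x_B)$. Oscillation control alone is useless there because the comparison constant is $0$, so smallness of the averages of $|f|$ itself is needed. This is exactly what Lemma \ref{12} provides, while the tail is absorbed by the decay $2^{-k\alpha}$. The second delicate point is making sure all dilated balls keep the center $x_B$, so that \eqref{g66} and \eqref{g88} apply uniformly in $k$.
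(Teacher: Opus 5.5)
Your proof is correct and follows essentially the same route as the paper. Both arguments use dyadic annuli centred at $x_B$, split into the cases $r_B<\rho(x_B)$ and $r_B\ge\rho(x_B)$, telescope averages in the first case, and apply \eqref{g44}--\eqref{g88} and Lemma \ref{12} to finitely many balls, with a ${\rm BMO}$ or ${\rm BMO}_L$ bound absorbing the tail. The differences are cosmetic: you interchange the double sum instead of using the paper's $kI_k$ bound, you take $B^*=2B$ where the paper fixes $B^*=4B$, and you note that the tail split is only needed when $r_B<\sigma_1$.
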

		\noindent where 
		\[ B^*=4B \quad {\rm and} \quad 
         f(B^*,V) =
		\begin{cases}
			f_{B^*} & \text{if } r_B < \rho(x_B), \\
			0 & \text{if } r_B \geq \rho(x_B).
		\end{cases}
		\]
		\begin{proof}
       By applying a dyadic decomposition, we have 
			\begin{align*}
				&r_B^\alpha \int_{(B^{*})^{c}} \cfrac{|f(y)-f(B^*,V)|}{|y-x_B|^{n+\alpha}}\,dy\\
				&\lesssim r_B^\alpha \sum_{k=1}^{\infty}\int_{2^kr_B\leq|y-x_B|<2^{k+1}r_B} |f(y)-f(B^*,V)|\left(2^kr_B\right)^{-n-\alpha}\,dy\\
				&\lesssim \sum_{k=1}^\infty 2^{-k\alpha}\fint_{B(x_B,2^{k+1}r_B)} |f(y)-f(B^*,V)|\,dy.
			\end{align*}
			
            Assume that $r_B\geq \rho(x_B)$. In this case,  $f(B^*,V)=0$. We choose a sufficiently large integer $k_1$ such that 
   \begin{align}\label{remainder term 1}
   \sum\limits_{k=k_1+1}^\infty2^{-k\alpha}\leq \varepsilon.
   \end{align}
   It follows that 
			\begin{align*}
				&\sum_{k=1}^\infty 2^{-k\alpha}\fint_{B(x_B,2^{k+1}r_B)} |f(y)-f(B^*,V)|\,dy\\
               				 &\qquad\lesssim \sum_{k=1}^{k_1}2^{-k\alpha}\fint_{B(x_B,2^{k+1}r_B)}|f(y)|\,dy
           +\sum_{k=k_1+1}^{\infty}2^{-k\alpha}\fint_{B(x_B,2^{k+1}r_B)}|f(y)|\,dy.
			\end{align*}

		The fact $\fint_{B(x_B,2^{k+1}r_B)}|f(y)|\,dy\leq \|f\|_{{\rm BMO}_L}$, together with \eqref{remainder term 1},  implies
			\begin{align}\label{sss}
		\sum_{k=k_1+1}^{\infty}2^{-k\alpha}\fint_{B(x_B,2^{k+1}r_B)}|f(y)|\,dy\,\leq \varepsilon \|f\|_{{\rm BMO}_L}.
		\end{align}	
	
    Let us estimate  $\sum_{k=1}^{k_1}2^{-k\alpha}\fint_{B(x_B,2^{k+1}r_B)}|f(y)|\,dy.$  Take $\sigma_1<2^{-k_1-1}{\sigma}$, $R_1=R$ and $M_1=M$, where $\sigma,R, M$ are the constants  appearing in Proposition \ref{cor:equivalent character}. If $r_B<\sigma_1$,  then $2^{j+1}r_B< \sigma,$ for all $j=1,\cdots,\,k_1$. If $r_B>R_1$, then $2^{j+1}r_B>R$, for all $j\in {\mathbb N}$. Combining these observations with \eqref{g77}, \eqref{g88} and Lemma \ref{12}, we deduce that    whenever $r_B<\sigma_1$, $r_B>R_1$, or $|x_B|>M$, 
			\begin{align}\label{f1}
				\sum_{k=1}^{k_1}2^{-k\alpha}\fint_{B(x_B,2^{k+1}r_B)}|f(y)|\,dy\lesssim \sum_{k=1}^{k_1}2^{-\alpha k}\varepsilon\lesssim \varepsilon.
			\end{align}   
			
			
            \noindent Combining \eqref{sss} and \eqref{f1}, we have proved Lemma \ref{l4} for the case $r_B\geq \rho(x_B)$.

            \smallskip
            
 			Assume that $r_B<\rho(x_B)$. In this case, $f(B^*,V)=f_{B^*}$. One can calculate
			\begin{align*}
				&\sum_{k=1}^\infty 2^{-k\alpha}\fint_{B(x_B,2^{k+1}r_B)} |f(y)-f(B^*,V)|\,dy\\
				&\quad\lesssim \sum_{k=1}^\infty2^{-k\alpha}\left(\fint_{B(x_B,2^{k+1}r_B)}|f(y)-f_{2^{k+1}B}|\,dy+|f_{B^*}-f_{2^{k+1}B}|\right).
			\end{align*}

   Note that 
   \begin{align*}
       \left|f_{B^*}-f_{2^{k+1}B}\right|\leq \sum_{j=1}^{k}|f_{2^jB}-f_{2^{j+1}B}|
       \lesssim k\sup_{1\leq j\leq k}\fint_{2^{j+1}B}|f(y)-f_{2^{j+1}B}|\,dy.
   \end{align*}
  Denote $I_k:=\sup_{1\leq j\leq k}\fint_{2^{j+1}B}|f(y)-f_{2^{j+1}B}|\,dy$. 
   Then  
   \begin{align*}
      \sum_{k=1}^\infty 2^{-k\alpha}\fint_{B(x_B,2^{k+1}r_B)} |f(y)-f(B^*,V)|\,dy\lesssim \sum_{k=1}^\infty 2^{-k\alpha}(k+1)I_k.
   \end{align*}
 
 Choose a sufficiently large integer $k_2$ such that $\sum\limits_{k=k_2+1}^\infty2^{-k\alpha}(k+1)\leq \varepsilon$. Together with the estimate $I_k\leq \|f\|_{{\rm BMO}_L}$, this yields 	\begin{align}\label{4.5}	\sum_{k=k_2+1}^{\infty}2^{-k\alpha}(k+1)I_k\,\leq \varepsilon \|f\|_{{\rm BMO}_L}.
			\end{align}
			
		It remains to estimate $\sum_{k=1}^{k_2}2^{-k\alpha}(k+1)I_k$. 	Set $\sigma_1<2^{-k_2-1}{\sigma}$, $R_1=R$ and $M_1=M$, where $\sigma, R, M$ are as in Proposition \ref{cor:equivalent character}. If $r_B<\sigma_1$, then $2^{j+1}r_B\leq \sigma$, for all $j=1,\,2\,,\cdots,\,k_2$. If $r_B>R_1$, then $2^{j+1}r_B>R$, for all $j\in {\mathbb N}$. These facts, together with  (\ref{g44})-\eqref{g66}, imply that whenever $r_B<\sigma_1, r_B>R_1$, or $|x_B|>M_1$,  we have 
			\begin{align}\label{the front term}
				\sum_{k=1}^{k_2}2^{-k\alpha}(k+1)I_k\lesssim \sum_{k=1}^{k_2}2^{-\alpha k}(k+1)\varepsilon\lesssim \varepsilon.
			\end{align}

			Combining (\ref{4.5}) and (\ref{the front term}), we have proved Lemma \ref{l4} in the case where  $r_B<\rho(x_B)$.          		
		\end{proof}

\bigskip

\section{The mapping $s_L:\ {\rm CMO}_L\to {\rm CMO}_L$}

  Suppose that $f\in {\rm CMO}_L(\mathbb{R}^n)$.
It has been shown in \cite[Theorem 8]{DGMTZ2005MathZ} that $s_L(f)\in {\rm BMO}_L$ and $\|s_L(f)\|_{{\rm BMO}_L}\lesssim\|f\|_{{\rm BMO}_L}$. To prove that $s_L(f)\in {\rm CMO}_L$\,,  by Proposition \ref{CMOL-new characterization},  it suffices to show that for any $\varepsilon>0$, there exist positive constants $\tilde{\sigma}\ll1$ and $\tilde{M}\gg1$ such that $s_L(f)$ satisfies the following three conditions,
		\begin{align}
	\sup\limits_{B: \,r_B\leq \tilde{\sigma}}\left(\fint_{B}\left|s_L(f)(x)-(s_L(f))_B\right| ^2\,dx\right)^{\frac{1}{2}}&\lesssim \varepsilon,\label{f41}\\
	\sup\limits_{B:\,|x_B|\geq \tilde{M}}\left(\fint_{B}\left|s_L(f)(x)-(s_L(f))_B\right| ^2\,dx\right)^{\frac{1}{2}}&\lesssim\varepsilon,\label{f42}	\\			
 \sup\limits_{B:\,|x_B|\geq \tilde{M},\,r_B\geq \rho(x_B) }\left(\fint_{B}\left|s_L(f)(x)\right| ^2\,dx\right)^{\frac{1}{2}}&\lesssim\varepsilon, \label{f43}
\end{align}
where $B=B(x_B,r_B)$ denotes the ball of radius $r_B$ centered at $x_B$. 

        \bigskip
  
\textbf{Proof of  \eqref{f41}.} 
  By a standard argument, to  prove \eqref{f41}, it suffices to prove 
            \begin{align}\label{r<a and r<rho}
\sup\limits_{B: \,r_B\leq \tilde{\sigma},\, r_B<\rho(x_B)}\left(\fint_{B}\left|s_L(f)(x)-C(B)\right| ^2\,dx\right)^{\frac{1}{2}}\lesssim \varepsilon,
            \end{align}
and
\begin{align}\label{r<a and r>rho}
\sup\limits_{B: \,r_B\leq \tilde{\sigma},\, r_B\geq \rho(x_B)}\left(\fint_{B}\left|s_L(f)(x)\right| ^2\,dx\right)^{\frac{1}{2}}\lesssim \varepsilon,
            \end{align}
where  $C(B)$ in \eqref{r<a and r<rho} is a constant depending only on the ball $B$ and is to be determined later.

\medskip

  {\bf Let us first prove  \eqref{r<a and r<rho}.} Denote $B=B(x_B,r_B)$. In this case,   {\it we always assume    $r_B<\rho(x_B)$}.
 
 Define $B^*=B(x_B,4r_B)$ and set			\begin{equation}\label{el}
				f=(f-f_{B^*})\chi_{B^*}+(f-f_{B^*})\chi_{(B^*)^c}+f_{B^*}=:f_1+f_2+f_3.
			\end{equation}
			
Let us define  $C(B)$ as follows:
\begin{align}\label{s:C(B)}
C(B):=\left\|Q^L_sf_2(x_B)\chi_{(r_B^2,\,\infty)}(\cdot)+Q^L_sf_3(x_B)\chi_{(r_B\rho(x_B),\,\infty)}(\cdot)\right\|_{L^2(ds/s)}.
\end{align}

\noindent It is not difficult to show that $C(B)\leq C_1(B)+C_2(B)<\infty$, where
$$
C_1(B):=\left(\int_{r_B^2}^{\infty}|Q_s^Lf_2(x_B)|^2\frac{ds}{s}\right)^{\frac{1}{2}} \quad {\rm and} \quad
C_2(B):=\left(\int_{r_B\rho(x_B)}^{\infty}\left| Q_s^Lf_3(x_B)\right|^2\frac{ds}{s}\right)^{\frac{1}{2}}.
$$
In fact, 
by (i) of Proposition \ref{p1}, one has
  \begin{align}\label{s:C_1(B)}
      C_1(B)^2
      &\lesssim \int_{r_B^2}^\infty \left\{ \int_{\Rn}s^{-\frac{n}{2}}e^{-A\frac{|x_B-y|^2}{s}}\left(\cfrac{\sqrt{s}}{\rho(x_B)}\right)^{-2}|f_2(y)|\,dy
 \right\}^2\,\cfrac{ds}{s}\nonumber\\
 &\lesssim \int_{r_B^2}^\infty \left(\frac{\rho(x_B)}{\sqrt{s}}\right)^4 \left(\frac{\sqrt{s}}{r_B}\right)^2\left\{ \int_{(B^*)^c} \frac{r_B}{|x_B-y|^{n+1}}|f(y)-f_{B^*}|\,dy
 \right\}^2\,\cfrac{ds}{s}\\
  &\lesssim \left(\frac{\rho(x_B)}{r_B}\right)^4\|f\|_{{\rm BMO}}^2,\nonumber
  \end{align}
  where in the last inequality above we have used Lemma \ref{F-S for BMO}.  Similarly,
 \begin{align}\label{s:C_2(B)}
      C_2(B)^2
      &\lesssim |f_{B^*}|^2\int_{r_B\rho(x_B)}^\infty \left\{ \int_{\Rn}s^{-\frac{n}{2}}e^{-A\frac{|x_B-y|^2}{s}}\left(\cfrac{\sqrt{s}}{\rho(x_B)}\right)^{-2}\,dy
 \right\}^2\,\cfrac{ds}{s}\nonumber\\
 &\lesssim |f_{B^*}|^2   \int_{r_B\rho(x_B)}^\infty \left(\frac{\rho(x_B)}{\sqrt{s}}\right)^4\, \cfrac{ds}{s}\\
 &\lesssim  |f_{B^*}|^2 \left(\frac{\rho(x_B)}{r_B}\right)^2.\nonumber
\end{align}
The above estimates, together with the fact that
 ${\rm BMO}_L\subset {\rm BMO} \subset L^1_{loc}$, imply that $C(B)<\infty$.

For any $x\in B$,  we apply Minkowski's inequality to get
\begin{equation}\label{f3}
\begin{aligned}
    |s_L(f)(x)-C(B)|&=\left| \left\|Q_s^Lf_1(x)+Q_s^Lf_2(x)+Q_s^Lf_3(x)\right\|_{L^2(ds/s)}-C(B)\right|\\
    &\leq \left\|Q_s^Lf_1(x)\right\|_{L^2(ds/s)}+\left\|Q_s^Lf_2(x)-Q_s^Lf_2(x_B)\chi_{(r_B^2,\,\infty)}(\cdot)\right\|_{L^2(ds/s)}\\
    &\quad+\left\|Q_s^Lf_3(x)-Q^L_sf_3(x_B)\chi_{(r_B\rho(x_B),\,\infty)}(\cdot)\right\|_{L^2(ds/s)}\\
    &=:I_1(x)+I_2(x)+I_3(x).
\end{aligned}
\end{equation}
Hence,
\begin{align}\label{s-function}
\left(\fint_B |s_L(f)(x)-C(B)|^2\,dx\right)^{1/2}\leq \sum_{j=1}^3\left(\fint_B (I_j(x))^2 \,dx\right)^{1/2}.
\end{align}

By the $L^2$ boundedness of $s_L$, we  obtain
				\begin{align}\label{ee-I1}
				\fint_{B}|I_1(x)|^2 dx=\fint_{B}|s_L(f_1)(x)|^2 dx\lesssim {|B|}^{-1}\int_{\mathbb{R}^n}|f_1(x)|^2 dx
				\lesssim\cfrac{1}{|B^*|}\int_{B^*}|f(x)-f_{B^*}|^2 dx.	
				\end{align}
           Let $\tilde{\sigma}_1={\sigma}/4$, where $\sigma$ is as in Proposition \ref{cor:equivalent character}.  By  (\ref{g44}), we have that  whenever $r_B\leq \tilde{\sigma}_1$, there holds           \begin{align}\label{I1-estimate}
            \cfrac{1}{|B|}\int_{B}|I_1(x)|^2 dx\lesssim \fint_{B^*}|f(x)-f_{B^*}|^2\,dx\leq \varepsilon^2. 
            \end{align}

        We now estimate  $I_2(x)$ for $x\in B$. For such $x$, we decompose
			\begin{align}	
   I_2(x)&\leq \left(\int_{0}^{r_B^2}\left|Q_s^Lf_2(x)\right|^2\frac{ds}{s}\right)^{\frac{1}{2}}
+\left(\int_{r_B^2}^{\infty}\left|Q_s^Lf_2(x)-Q_s^Lf_2(x_B)\right|^2\frac{ds}{s}\right)^{\frac{1}{2}}\notag\\
				&=: I_{21}(x)+I_{22}(x).\label{41}
			\end{align} 
		Using  (i) of Proposition \ref{p1} together with the observation that for any $x\in B$ and  $y\in (B^*)^c$,
        $$
        |y-x_B|\leq |y-x|+|x-x_B|\leq |y-x|+r_B\leq 2|y-x|,
        $$  
    we obtain, for every $x\in B$,
			\begin{align}\label{I21}
				(I_{21}(x))^2
				&\lesssim 
				\int_{0}^{r_B^2}\left(\int_{\Rn}s^{-\frac{n}{2}}e^{-A\frac{|x-y|^2}{s}}|f_2(y)|\,dy\right)^2\frac{ds}{s}\nonumber\\
				&\lesssim
				\int_{0}^{r_B^2}\left(\frac{\sqrt{s}}{r_B}\right)^2\frac{ds}{s}\ \left(\int_{(B^*)^c}\cfrac{r_B}{|y-x_B|^{n+1}} |f(y)-f_{B^*}|\,dy\right)^2\\
                &\lesssim
   \left(\int_{(B^*)^c}\cfrac{r_B}{|y-x_B|^{n+1}} |f(y)-f_{B^*}|\,dy\right)^2. \nonumber
			\end{align}
						
		We proceed to estimate $I_{22}(x)$ for $x\in B$. Observe that $|x-x_B|<r_B\leq \sqrt{s}$ for $x\in B$ and $e^{-|z|}\lesssim (1+|z|)^{-\alpha_1}\leq |z|^{-\alpha_1}$ for any $\alpha_1>0$. We apply (ii) of Proposition \ref{p1} to obtain, for any $x\in B$,
			\begin{align}\label{I22}
				(I_{22}(x))^2
				&\lesssim 
				\int_{r_B^2}^{\infty}\left[\int_{\Rn}\left(\frac{|x-x_B|}{\sqrt{s}}\right)^{\delta}s^{-n/2}e^{-A\frac{|x_B-y|^2}{s}}|f_2(y)|\,dy\right]^2\frac{ds}{s}\nonumber\\
				&\lesssim
				\int_{r_B^2}^{\infty}\left(\frac{r_B}{\sqrt{s}}\right)^{2\delta}\left(\int_{\Rn} \frac{(\sqrt{s})^{\alpha_1}}{|x_B-y|^{n+\alpha_1}}|f_2(y)|\,dy\right)^2\frac{ds}{s}\nonumber\\
                &\lesssim
\int_{r_B^2}^{\infty}\left(\frac{r_B}{\sqrt{s}}\right)^{2(\delta-\alpha_1)}\frac{ds}{s}\left(\int_{(B^*)^c} \frac{{r_B}^{\alpha_1}}{|x_B-y|^{n+\alpha_1}}|f(y)-f_{B^*}|\,dy\right)^2\\
&\lesssim \left(\int_{(B^*)^c} \frac{{r_B}^{\alpha_1}}{|x_B-y|^{n+\alpha_1}}|f(y)-f_{B^*}|\,dy\right)^2\nonumber
			\end{align}	
			where we take $0<\alpha_1<\delta$.
            
            Combing \eqref{I21} and \eqref{I22}, we have shown that 
\begin{align}\label{ee-I2}
\left(\fint_B |I_2(x)|^2\,dx\right)^{1/2}\lesssim \int_{(B^*)^c} \frac{{r_B}^{\alpha_2}}{|x_B-y|^{n+\alpha_2}}|f(y)-f_{B^*}|\,dy,
\end{align}
where $\alpha_2:=\min\{\alpha_1,1\}$.    Let $\tilde{\sigma}_2=\sigma_1$, where $\sigma_1$ is as in Lemma \ref{l4}. By applying  Lemma \ref{l4} to \eqref{ee-I2}, we obtain that when  $r_B<\tilde{\sigma}_2$, 
            \begin{align}\label{I2-estimate}
            \left(\fint_B |I_2(x)|^2\,dx\right)^{1/2}\lesssim \varepsilon.
            \end{align}

\smallskip

We still need to estimate $I_3(x)$ for $x\in B$, which is a delicate part  and calls for extra care. We begin by establishing  Lemma \ref{l6}. Note that  the factor $\left(\frac{r_B}{\rho(x_B)}\right)^\eta$ appearing in \eqref{I3} is crucial for showing that  $I_3(x)$ is small.
			\begin{lemma}\label{l6}
                  For any $x\in B$, if $r_B<\rho(x_B)$, then there exists a constant $\eta>0$ such that 
				\begin{align}\label{I3}
				    I_3(x)\lesssim |f_{B^*}|\left(\frac{r_B}{\rho(x_B)}\right)^\eta.
                \end{align}
			\end{lemma}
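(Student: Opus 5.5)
Since $f_3\equiv f_{B^*}$ is constant, $Q_s^Lf_3(x)=f_{B^*}\int_{\mathbb{R}^n}Q_s^L(x,y)\,dy$. Hence
\[
I_3(x)^2=|f_{B^*}|^2\Big(\int_0^{r_B\rho(x_B)}|Q_s^L1(x)|^2\frac{ds}{s}+\int_{r_B\rho(x_B)}^\infty|Q_s^L1(x)-Q_s^L1(x_B)|^2\frac{ds}{s}\Big),
\]
and it suffices to bound each integral by $C(r_B/\rho(x_B))^{2\eta}$. Throughout, $x\in B$ and $r_B<\rho(x_B)$, so Proposition \ref{p2} gives $\rho(x)\approx\rho(x_B)$. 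The key point is that the splitting point $r_B\rho(x_B)$ is the geometric mean of $r_B^2$ and $\rho(x_B)^2$.

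\emph{Small $s$.} For $s\le r_B\rho(x_B)$ I use the cancellation estimate (iii) of Proposition \ref{p1}:
\[
|Q_s^L1(x)|\lesssim\Big(\frac{\sqrt s}{\rho(x)}\Big)^{\delta}\approx\Big(\frac{\sqrt s}{\rho(x_B)}\Big)^{\delta}.
\]
Then
\[
\int_0^{r_B\rho(x_B)}\Big(\frac{s}{\rho(x_B)^2}\Big)^{\delta}\frac{ds}{s}\lesssim\Big(\frac{r_B\rho(x_B)}{\rho(x_B)^2}\Big)^{\delta}=\Big(\frac{r_B}{\rho(x_B)}\Big)^{\delta}.
\]
This is $(r_B/\rho(x_B))^{2\eta}$ with $\eta=\delta/2$.

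\emph{Large $s$.} For $s\ge r_B\rho(x_B)\ge r_B^2$ we have $|x-x_B|<r_B\le\sqrt s$. Integrating (ii) of Proposition \ref{p1} in $y$ gives
\[
|Q_s^L1(x)-Q_s^L1(x_B)|\lesssim\Big(\frac{r_B}{\sqrt s}\Big)^{\delta}\Big(1+\frac{\sqrt s}{\rho(x_B)}\Big)^{-N}.
\]
The factor $(1+\sqrt s/\rho(x_B))^{-N}$ supplies decay for $s\gtrsim\rho(x_B)^2$. Take $N\ge1$ and split the range at $\rho(x_B)^2$.

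On $[r_B\rho(x_B),\rho(x_B)^2]$ the bound $(r_B^2/s)^{\delta}$ integrates to at most
\[
\Big(\frac{r_B^2}{r_B\rho(x_B)}\Big)^{\delta}=\Big(\frac{r_B}{\rho(x_B)}\Big)^{\delta}.
\]
On $[\rho(x_B)^2,\infty)$ the integrand is at most $(r_B^2/s)^{\delta}(\rho(x_B)^2/s)^{N}$. Its integral is $\lesssim(r_B/\rho(x_B))^{2\delta}\le(r_B/\rho(x_B))^{\delta}$.

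Combining both ranges gives \eqref{I3} with $\eta=\delta/2$.

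\emph{Main obstacle.} The difficulty lies in choosing the split. Splitting at $r_B^2$ would leave $\int_{r_B^2}^{\rho(x_B)^2}(s/\rho(x_B)^2)^{\delta}\,ds/s$, which is only $O(1)$ and not small. Splitting at $\rho(x_B)^2$ would leave $\int_{r_B^2}^{\rho(x_B)^2}(r_B^2/s)^{\delta}\,ds/s$, also only $O(1)$. Only the intermediate point $r_B\rho(x_B)$ balances the two estimates. The remaining checks are routine: replacing $\rho(x)$ by $\rho(x_B)$ in (iii) for $x\in B$, and confirming $r_B\rho(x_B)\ge r_B^2$ so that (ii) applies on the whole large-$s$ range.
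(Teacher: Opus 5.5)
Your proof is correct and follows the paper's argument: you split at $r_B\rho(x_B)$, apply the cancellation bound (iii) of Proposition \ref{p1} below that point and the H\"older bound (ii) above it, and get $\eta=\delta/2$. Your further split at $\rho(x_B)^2$ using $(1+\sqrt{s}/\rho(x_B))^{-N}$ is harmless but unnecessary, since $\int_{r_B\rho(x_B)}^{\infty}(r_B^2/s)^{\delta}\,\frac{ds}{s}\lesssim (r_B/\rho(x_B))^{\delta}$ already, which is how the paper proceeds. (Also, in your closing remark the two leftover integrands are interchanged: splitting at $r_B^2$ leaves the H\"older term $(r_B^2/s)^{\delta}$ on $[r_B^2,\rho(x_B)^2]$, and splitting at $\rho(x_B)^2$ leaves the cancellation term $(s/\rho(x_B)^2)^{\delta}$.)
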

			\begin{proof}

                Let $x\in B$. One can compute
				\begin{align*}
					I_3(x)&\leq|f_{B^*}| \left(\int_{0}^{r_B\rho(x_B)}\left|Q_s^L(1)(x)\right|^2\frac{ds}{s}\right)^{\frac{1}{2}}
					+|f_{B^*}| \left(\int_{r_B\rho(x_B)}^{\infty}\left|Q_s^L(1)(x)-Q_s^L(1)(x_B)\right|^2\frac{ds}{s}\right)^{\frac{1}{2}}\\
					&=:|f_{B^*}|(I_{31}(x)+I_{32}(x)).
				\end{align*}
				
				For the term $I_{31}$, we apply  (iii) of Proposition \ref{p1} together with the fact that $\rho(x)\approx \rho(x_B)$ whenever $x\in B$ and $r_B<\rho(x_B)$, to show that for any $x\in B$,
				\begin{align*}
					I_{31}(x)&\lesssim \left(\int_0^{r_B\rho(x_B)}\left(\cfrac{\sqrt{s}}{\rho(x_B)}\right)^{2\delta}\frac{ds}{s} \right)^\frac{1}{2}
					\lesssim \left( \frac{r_B}{\rho(x_B)} \right)^{\frac{\delta}{2}}.
				\end{align*}

                For the term $I_{32}$, we note that $|x-x_B|<r_B<\sqrt{r_B\rho(x_B)}\leq\sqrt{s}$ when $s\geq r_B\rho(x_B)$. 
 Then we may use (ii) of Proposition \ref{p1} to obtain that for any $x\in B$,
				\begin{align*}
					I_{32}(x)&\lesssim \left\{\int_{r_B\rho(x_B)}^{\infty} \left(\cfrac{|x-x_B|}{\sqrt{s}}\right)^{2\delta} \bigg( \int_{\mathbb{R}^{n}}s^{-\frac{n}{2}}e^{-A\frac{|x-y|^2}{s}} \,dy \bigg)^2\cfrac{ds}{s}  \right\}^{\frac{1}{2}}\\
					&\lesssim \biggl(\int_{r_B\rho(x_B)}^\infty \left(\frac{r_B}{\sqrt{s}}\right)^{2\delta}\frac{ds}{s}\biggr)^{\frac{1}{2}}\\
					&\lesssim \left(\cfrac{r_B}{\rho(x_B)}\right)^{\frac{\delta}{2}}.
				\end{align*}
    We complete the proof of this lemma. 
			\end{proof}	


		With Lemma \ref{l6} at our disposal, we can now proceed to  complete the estimate  $I_3(x)$.	  By Lemmas \ref{l6} and  \ref{l2}, we have that for  $x\in B$ with $r_B<\rho(x_B)$,
			\begin{align*}
				I_3(x)&\lesssim \left(\cfrac{r_B}{\rho(x_B)}\right)^\eta\bigg[1+\log\left(\frac{\rho(x_B)}{r_B}\right)\bigg]\|f\|_{{\rm BMO}_L}\\
                &\lesssim  \left(\cfrac{r_B}{\rho(x_B)}\right)^{\eta/2}\|f\|_{{\rm BMO}_L}.
                \end{align*}

               If $ \rho(x_B)\geq \sigma/2$, where $\sigma$ is as in Proposition \ref{cor:equivalent character}. Let $\tilde{\sigma}_3=\Big(\frac{\varepsilon}{\|f\|_{{\rm BMO}_L}}\Big)^{2/\eta}\sigma$. Then we have that for $x\in B$,  
                \begin{align}\label{I3-1}
                I_3(x)\lesssim  \left(\cfrac{r_B}{\sigma}\right)^{\eta/2}\|f\|_{{\rm BMO}_L}   \leq\varepsilon,  \quad {\rm when} \ \ r_B<\tilde{\sigma}_3.
			    \end{align}

          Consider the remaining case  $\rho(x_B)<\sigma/2$.  Since $r_B<\rho(x_B)$, we may  assume that $2^{l-1}r_B\leq  \rho(x_B)<2^{l}r_B,$  for some $l\in {\mathbb N}^+$, then  
			\begin{align}\label{e-f_B}
				|f_{B^*}|&\lesssim |f_{B^*}-f_{2^{l}B}|+|f_{2^{l}B}| \nonumber\\
				&\lesssim  \sum_{j=3}^{l}\fint_{2^{j}B} |f(x)-f_{2^{j}B}|\,dx+\fint_{2^{l}B}|f(y)|\,dy.
			\end{align}
\noindent Observe  that $\rho(x_B)<2^{l}r_B<2\rho(x_B)<\sigma$. Applying  \eqref{g44}, Proposition \ref{cor:equivalent character} (or Lemma \ref{12}), and H\"older's inequality to \eqref{e-f_B} yields
\begin{align}\label{important estimate}
 |f_{B^*}|\lesssim \log\left(\frac{\rho(x_B)}{r_B}\right) \varepsilon.
\end{align}
This, together with Lemma \ref{l6},  implies that for $x\in B$,
            \begin{align}\label{I3-2}
				I_3(x)\lesssim \log\left(\frac{\rho(x_B)}{r_B}\right)\left(\cfrac{r_B}{\rho(x_B)}\right)^\eta\varepsilon
				\lesssim  \varepsilon.
			\end{align}

Combing \eqref{I3-1} and \eqref{I3-2}, we obtain that if $r_B<\tilde{\sigma}_3$,
\begin{align}\label{I3-estimate}
 \left(\fint_B|I_3(x)|^2\,dx\right)^{1/2}\lesssim \varepsilon.
\end{align}
Let $\tilde{\sigma}=\min\{\tilde{\sigma}_1,\tilde{\sigma}_2,\tilde{\sigma}_3\}$. Combing \eqref{s-function}, \eqref{I1-estimate}, \eqref{I2-estimate} and \eqref{I3-estimate}, we have shown 
$$
\sup\limits_{B: \,r_B\leq \tilde{\sigma},\  r_B<\rho(x_B)}\left(\fint_{B}\left|s_L(f)(x)-C(B)\right| ^2\,dx\right)^{\frac{1}{2}}\lesssim \varepsilon,
$$
which is exactly \eqref{r<a and r<rho}.

\bigskip

  {\bf Next, let us prove \eqref{r<a and r>rho}.} Its proof is much simpler than that of \eqref{r<a and r<rho}.  
  In this case, {\it we assume that 
     $r_B\geq \rho(x_B)$}. 
     
     One may decompose $f$ as follows.             \begin{equation}\label{ee-f-decomposition}
f=f\chi_{B^*}+f\chi_{(B^*)^c}.
			\end{equation}          
            Then, by the $L^2$ boundedness of $s_L$, we have 
			\begin{equation}\label{s2}
				\begin{aligned}
					\cfrac{1}{|B|}\int_{B}|s_L(f\chi_{B^*})(x)|^2 dx
					\lesssim\cfrac{1}{|B^*|}\int_{B^*}|f(x)|^2 dx.
				\end{aligned}
			\end{equation}
           
            Denote $\tilde{\tilde{\sigma}}_1={\sigma}/4$, where $\sigma$ is as in Lemma \ref{12}.   We apply  Lemma \ref{12} to obtain that  whenever $r_B\leq \tilde{\tilde{\sigma}}_1$, there holds           
           \begin{align}\label{ee-g-smallball-21}
							\left(\cfrac{1}{|B|}\int_{B}|s_L(f\chi_{B^*})(x)|^2\, dx\right)^{1/2}\lesssim \varepsilon.
			\end{align}

            For any $x\in B$, we have
			\begin{equation}\label{e11}
				\begin{aligned}
					s_L(f\chi_{(B^*)^c})(x)
					\lesssim \left(\int_{0}^{r_B^2}\left|\int_{(B^*)^c} Q_s^L(x,y)f(y)\,dy\right|^2\frac{ds}{s}\right)^{1/2}
+\left(\int^{\infty}_{r_B^2}\left|\int_{(B^*)^c} Q_s^L(x,y)f(y)\,dy\right|^2\frac{ds}{s}\right)^{1/2}
				\end{aligned}
			\end{equation}
			
			\noindent By (i) of Proposition \ref{p1} and the fact $|y-x_B|\approx |y-x|$ when $y\in (B^*)^c$ and $x\in B$, we have 
			\begin{align}\label{ee3.1-1}
				\left(\int_{0}^{r_B^2}\left|\int_{(B^*)^c}Q_s^L(x,y)f(y)\,dy\right|^2\frac{ds}{s}\right)^{1/2} &\lesssim \left[\int_{0}^{r_B^2}\left(\int_{(B^*)^c}s^{-\frac{n}{2}}e^{-A\frac{|x_B-y|^2}{s}}|f(y)|\,dy\right)^2\frac{ds}{s}\right]^{1/2}\nonumber\\
				&\lesssim 				\left[\int_{0}^{r_B^2}\left(\frac{\sqrt{s}}{r_B}\right)^2 \frac{ds}{s}\right]^{1/2}\,\left(\int_{(B^{*})^{c}} \frac{r_B}{|x_B-y|^{n+1}} |f(y)|\,dy\right)\\
                &\lesssim \int_{(B^{*})^{c}} \frac{r_B}{|x_B-y|^{n+1}} |f(y)|\,dy.\nonumber
			\end{align}

          By Proposition \ref{p2}, we have  $\rho(x)\lesssim \rho(x_B)+|x-x_B|\lesssim r_B$ for $x\in B$.  We apply  (i) of Proposition \ref{p1}  again  to  get 
			\begin{align}\label{ee-3-r big}
\left|\int_{(B^*)^c} Q_s^L(x,y)f(y)\,dy\right|&\lesssim  \left(1+\frac{\sqrt{s}}{r_B}\right)^{-2}\,\int_{(B^{*})^{c}} \frac{\sqrt{s}}{|x_B-y|^{n+1}}|f(y)|\,dy\nonumber\\
&\lesssim \left(\frac{r_B}{\sqrt{s}}\right)\int_{(B^{*})^{c}} \frac{r_B}{|x_B-y|^{n+1}}|f(y)|\,dy
            \end{align}
 Thus,              
\begin{align}\label{ee3.1-2}
				\left(\int^{\infty}_{r_B^2}\left|\int_{(B^*)^c} Q_s^L(x,y)f(y)\,dy\right|^2\frac{ds}{s}\right)^{1/2}
				&\lesssim 
				\left[\int^{\infty}_{r_B^2} \left(\frac{r_B}{\sqrt{s}}\right)^2\frac{ds}{s}\right]^{1/2} \,\left(\int_{(B^{*})^{c}} \frac{r_B}{|x_B-y|^{n+1}}|f(y)|\,dy\right)\nonumber\\
                &\lesssim \int_{(B^{*})^{c}} \frac{r_B}{|x_B-y|^{n+1}}|f(y)|\,dy,
			\end{align}
            which, together with \eqref{ee3.1-1}, gives 
 \begin{align}\label{ee-g21}
            \left(\frac{1}{|B|}\int_B |s_L(f\chi_{(B^*)^c})(x)|^2 \,dx \right)^{1/2}\lesssim \int_{(B^{*})^{c}} \frac{r_B}{|x_B-y|^{n+1}}|f(y)|\,dy.
            \end{align}
            
            Let $\tilde{\tilde{\sigma_2}}=\sigma_1$, where $\sigma_1$ is as in Lemma \ref{l4}. We apply \eqref{ee-g21} and Lemma \ref{l4} to  obtain that for $r_B<\tilde{\tilde{\sigma_2}}$, it holds
            \begin{align}\label{ee-g-smallball-22}
            \left(\frac{1}{|B|}\int_B |s_L(f\chi_{(B^*)^c})(x)|^2 \,dx \right)^{1/2}\lesssim  \varepsilon.
            \end{align}
			
           Let $\tilde{\sigma}=\min\{\tilde{\tilde{\sigma_1}},\tilde{\tilde{\sigma_2}}\}$. Then  \eqref{r<a and r>rho} is a consequence of  \eqref{ee-g-smallball-21} and \eqref{ee-g-smallball-22}.

            \smallskip
            \bigskip
			
			{\bf Proof of  \eqref{f43}.}  The situation is very similar to that of \eqref{r<a and r>rho}.  By combining \eqref{ee-f-decomposition}, \eqref{s2}, \eqref{ee-g21}, \eqref{g88} and Lemma \ref{l4}, and setting $\tilde{M}=\max\{M, M_1\}$, (with   $M$ from  \eqref{g88} and  $M_1$ from Lemma \ref{l4}), we obtain 
			\begin{align*}
				\sup\limits_{B:\,|x_B|\geq \tilde{M},r_B\geq \rho(x_B) }\left(\fint_{B}|s_L(f)(x)|^2dx\right)^{\frac{1}{2}}&\lesssim \varepsilon, 
			\end{align*}
			which is just (\ref{f43}).

\bigskip
            
			\textbf{Proof of $(\ref{f42})$.}   The basic fact
            
			\[
			\left(\fint_{B}|g(y)-g_B|^2\,dy\right)^{1/2} \leq 2 \left(\fint_{B}|g(y)|^2\,dy,\right)^{1/2},
			\]
           together with \eqref{f43}, gives 
            $$
\sup\limits_{B:\,|x_B|\geq \tilde{M}, r_B\geq \rho(x_B)} \left(\fint_{B}\left|s_L(f)(x)-(s_L(f))_B\right|^2\,dx\right)^{1/2}\lesssim\varepsilon.	
            $$          
Thus, to prove \eqref{f42}, it suffices to prove  there exists a constant $\tilde{\tilde{M}}>0$ such that
  \begin{align}\label{ee-farball-smallr}
\sup\limits_{B: \,|x_B|\geq \tilde{\tilde{M}},\  r_B<\rho(x_B)}\left(\fint_{B}\left|s_L(f)(x)-C(B)\right| ^2\,dx\right)^{\frac{1}{2}}\lesssim \varepsilon,
            \end{align}
where $C(B)$ is as in \eqref{s:C(B)}.

The proof of \eqref{ee-farball-smallr} is  very similar to that of  \eqref{r<a and r<rho}.  By combining  \eqref{s-function}, \eqref{ee-I1}, \eqref{ee-I2}, and Lemma \ref{l6}, we have that for $r_B<\rho(x_B)$, 
\begin{align*}
&\left(\fint_{B}\left|s_L(f)(x)-C(B)\right| ^2\,dx\right)^{1/2}\\
&\lesssim \left(\fint_{B^*}|f(x)-f_{B^*}|^2 dx\right)^{1/2}+\int_{(B^*)^c} \frac{{r_B}^{\alpha_1}}{|x_B-y|^{n+\alpha_1}}|f(y)-f_{B^*}|\,dy+|f_{B^*}|\left(\frac{r_B}{\rho(x_B)}\right)^\eta.
\end{align*}
Let $\tilde{\tilde{M}}=\max\{M,M_1\}$, where  $M$ is as  in Proposition \ref{cor:equivalent character} and $M_1$ is as  in Lemma \ref{l4}. When $|x_B|\geq \tilde{\tilde{M}}$,  it follows from \eqref{g66} and Lemma \ref{l4} that
 $$
 \left(\fint_{B^*}|f(x)-f_{B^*}|^2 dx\right)^{1/2}+\int_{(B^*)^c} \frac{{r_B}^{\alpha_1}}{|x_B-y|^{n+\alpha_1}}|f(y)-f_{B^*}|\,dy\lesssim \varepsilon.
 $$

\noindent Assume that $2^{l-1}r_B\leq  \rho(x_B)<2^{l}r_B,$  for some $l\in {\mathbb N}^+$.  By \eqref{e-f_B}, \eqref{g66} and \eqref{g88}, we have that when $|x_B|\geq \tilde{\tilde{M}}$, there holds
    \begin{align*}
				|f_{B^*}|\left(\frac{r_B}{\rho(x_B)}\right)^\eta
				&\lesssim  \left(\sum_{j=2}^{l}\fint_{2^{j}B} |f(x)-f_{2^{j}B}|\,dx+\fint_{2^{l}B}|f(y)|\,dy\right)\left(\frac{r_B}{\rho(x_B)}\right)^\eta\\
                &\lesssim \left(\frac{r_B}{\rho(x_B)}\right)^\eta\log\left(\cfrac{\rho(x_B)}{r_B}\right)\varepsilon\\
                &\lesssim \varepsilon.
			\end{align*}		
     
          This completes the proof of \eqref{ee-farball-smallr}.

\bigskip

\section{The mapping $S_L:\ {\rm CMO}_L\to {\rm CMO}_L$}

Since the proof strategy for $S_L$ parallels  that for $s_L$ in Section 3, we only give a sketch below, highlighting the main differences.          
 
Assume that $f\in {\rm CMO}_L(\Rn)$. It follows \cite[Theorem 6]{LL2011AdvMath} that   $S_L(f)\in {\rm BMO}_L$ and   $\|S_L(f)\|_{{\rm BMO}_L}\lesssim\|f\|_{{\rm BMO}_L}$. Thus, to prove that $S_L(f)\in {\rm CMO}_L$,  by Proposition \ref{CMOL-new characterization},  it suffices to show that for any $\varepsilon>0$, there exist positive constants $\tilde{\sigma}\ll1$ and $\tilde{M}\gg1$ such that  
		\begin{align*}
	&\sup_{B:r_{B}\leq\tilde{\delta}}\left(\fint_{B}|S_L(f)(x)-S_L(f)_{B}|^{2}\,dx\right)^{1/2}+
				\sup_{B:|x_B|\geq \tilde{M}}\left(\fint_{B}|S_L(f)(x)-S_L(f)_{B}|^{2}\,dx\right)^{1/2}\\
                &\hskip 1cm+
				\sup_{B:|x_B|\geq \tilde{M},\,r_{B}\geq\rho(x_{B})}\left(\fint_{B}|S_L(f)(x)|^{2}\,dx\right)^{1/2}\lesssim\varepsilon.
			\end{align*}

Denote  $B^*:=B(x_B,4r_B)$. By a similar argument in Section 3, it suffices to show that the following inequalities hold.  
		
 (i) If $r_B<\rho(x_B)$, then  there exist  constants  $\tilde{C}(B)$ (depending only on  $B$)  and a positive number $\eta$ such that
\begin{align*}
\left(\fint_{B}\left|S_L(f)(x)-\tilde{C}(B)\right| ^2\,dx\right)^{1/2}
&\lesssim \left(\cfrac{1}{|B^*|}\int_{B^*}|f(x)-f_{B^*}|^2 dx\right)^{1/2}+\big|f_{B^*}\big|\left(\frac{r_B}{\rho(x_B)}\right)^\eta\\
&\hskip 0.5cm+\int_{(B^*)^c} \frac{{r_B}^{1/4}}{|x_B-z|^{n+1/4}}|f(z)-f_{B^*}|\,dz.
\end{align*}

(ii) If $r_B\geq\rho(x_B)$,  then
\begin{align*}
\left(\fint_{B}\left|S_L(f)(x)\right|^2\,dx\right)^{1/2}\lesssim \left(\cfrac{1}{|B^*|}\int_{B^*}|f(x)|^2 dx\right)^{1/2}+\int_{(B^{*})^{c}} \frac{r_B}{|x_B-y|^{n+1}}|f(y)|\,dy.
\end{align*}

   \bigskip

{\bf Proof of (i)} \  Assume that     $r_B<\rho(x_B)$.
We decompose $f$ as the sum $f = f_1 + f_2 + f_3$, 
where each  $f_i$ is given by \eqref{el}.	
Define $\tilde{C}(B)$  as follows:
$$
\tilde{C}(B):=\left\|Q^L_sf_2(y)\chi_{[r_B^2,\,\infty)}(s)\chi_{B(x_B,\sqrt{s})}(y)+Q^L_sf_3(y)\chi_{[r_B\rho(x_B),\,\infty)}(s)\chi_{B(x_B,\sqrt{s})}(y)
\right\|_{L^2({\mathbb R}^{n+1}_{+},s^{-n/2-1}dyds)}.
$$

We claim  that $\tilde{C}(B)\leq \tilde{C}_1(B)+\tilde{C}_2(B)<\infty$, where
$$
\tilde{C}_1(B):=\left(\int_{r_B^2}^{\infty}\int_{B(x_B,\sqrt{s})}|Q_s^Lf_2(y)|^2\,\frac{dyds}{s^{\frac{n}{2}+1}}\right)^{\frac{1}{2}} \quad {\rm and} \quad
\tilde{C}_2(B):=\left(\int_{r_B\rho(x_B)}^{\infty}\int_{B(x_B,\sqrt{s})}|Q_s^Lf_3(y)|^2\,\frac{dyds}{s^{\frac{n}{2}+1}}\right)^{\frac{1}{2}}.
$$
In fact, by  (ii) of Proposition \ref{p1}, if $|y-x_B|<\sqrt{s}$, then 
        \begin{align*}
            \left|Q_s^L(y,z)-Q_s^L(x_B,z)\right|&\lesssim s^{-\frac{n}{2}}e^{-A\frac{|x_B-z|^2}{s}}\left(1+\cfrac{\sqrt{s}}{\rho(x_B)}\right)^{-2},
        \end{align*}
 which, together with (i) of Proposition \ref{p1}, yields that       
\begin{align}\label{S:QL(y,z)}
				\left|Q_s^L(y,z)\right|\leq \left|Q_s^L(y,z)-Q_s^L(x_B,z)\right|+\left|Q_s^L(x_B,z)\right|\lesssim s^{-\frac{n}{2}}e^{-A\frac{|x_B-z|^2}{s}}\left(1+\cfrac{\sqrt{s}}{\rho(x_B)}\right)^{-2}.
\end{align}
   Then we apply \eqref{S:QL(y,z)} to obtain
  \begin{align*}
      \tilde{C}_1(B)^2 &\lesssim \int_{r_B^2}^\infty\int_{|y-x_B|<\sqrt{s}}\left[ \int_{\Rn} s^{-\frac{n}{2}}e^{-A\frac{|z-x_B|^2}{s}}\left(1+\cfrac{\sqrt{s}}{\rho(x_B)}\right)^{-2}|f_2(z)|\,dz \right]^2\cfrac{dyds}{s^{\frac{n}{2}+1}}\\
      &\lesssim \int_{r_B^2}^\infty\left[ \int_{\Rn} s^{-\frac{n}{2}}e^{-A\frac{|z-x_B|^2}{s}}\left(1+\cfrac{\sqrt{s}}{\rho(x_B)}\right)^{-2}|f_2(z)|\,dz \right]^2\cfrac{ds}{s}.   
  \end{align*}
 By \eqref{s:C_1(B)}, we have $ \tilde{C}_1(B)<\infty$.   Similarly, by \eqref{S:QL(y,z)} and  \eqref{s:C_2(B)}, we have $\tilde{C}_2(B)<\infty$.
  
For any $x\in B$,  we apply Minkowski's inequality to obtain
\begin{align*}
    |S_L(f)(x)-\tilde{C}(B)|&\leq\left\| Q_s^Lf_1(y)\chi_{B(x,\sqrt{s})}(y)\right\|_{L^2(\IR^n_+,\,s^{-\frac{n}{2}-1}dyds)}\\
    &\quad +\left\|Q_s^Lf_2(y)\chi_{B(x,\sqrt{s})}(y)- Q^L_sf_2(y)\chi_{[r_B^2,\infty)}(s)\chi_{B(x_B,\sqrt{s})}(y)\right\|_{L^2(\IR^n_+,\,s^{-\frac{n}{2}-1}dyds)}\\
&\quad +\left\|Q_s^Lf_3(y)\chi_{B(x,\sqrt{s})}(y)- Q^L_sf_3(y)\chi_{[r_B\rho(x_B),\infty)}(s)\chi_{B(x_B,\sqrt{s})}(y)\right\|_{L^2(\IR^n_+,\,s^{-\frac{n}{2}-1}dyds)}\\
    &=: \tilde{I}_1(x)+\tilde{I}_2(x)+\tilde{I}_3(x).
\end{align*}

By the $L^2$ boundedness of $S_L$, we  obtain
				\begin{align}\label{S:ee-I1}
	\fint_{B}|{\tilde I}_1(x)|^2 dx=\fint_{B}|S_L(f_1)(x)|^2 dx\lesssim {|B|}^{-1}\int_{\mathbb{R}^n}|f_1(x)|^2 dx
				\lesssim\cfrac{1}{|B^*|}\int_{B^*}|f(x)-f_{B^*}|^2 dx.	
				\end{align}

Given sets $E$ and $F$,  write  $E\Delta F:=(E\backslash F) \cup (F\backslash E)$.   For $x\in B$, we  estimate ${\tilde  I}_2(x)$ as follows.  
			\begin{align*}
				{\tilde  I}_2(x)
                &\leq \left(\int^{r_B^2}_0\int_{B(x,\sqrt{s})}\left|Q_s^Lf_2(y)\right|^2\cfrac{dyds}{s^{\frac{n}{2}+1}}\right)^\frac{1}{2}+\left(\int_{r_B^2}^\infty\int_{B(x,\sqrt{s})\Delta B(x_B,\sqrt{s})}\left|Q_s^Lf_2(y)\right|^2\cfrac{dyds}{s^{\frac{n}{2}+1}}\right)^\frac{1}{2}\\
		&=:\tilde{I}_{21}(x)+\tilde{I}_{22}(x).
			\end{align*}
		
        For any $x\in B, z\in (B^{*})^c, y\in B(x,\sqrt{s})$ with $ \sqrt{s}<r_B$, it holds that $|z-y|\geq |z-x_B|/2$.
			Therefore, 
			\begin{align}\label{S:I_21}
				({\tilde I}_{21}(x))^2&\lesssim\int^{r_B^2}_0\int_{|y-x|<\sqrt{s}}\left( \int_{(B^{*})^c}s^{-\frac{n}{2}}e^{-A\frac{|z-y|^2}{s}}|f_2(z)| \,dz\right)^2\cfrac{dyds}{s^{\frac{n}{2}+1}}\nonumber\\
    &\lesssim\int^{r_B^2}_0\int_{|y-x|<\sqrt{s}}\left( \int_{(B^{*})^c}s^{-\frac{n}{2}}e^{-A\frac{|z-x_B|^2}{4s}}|f_2(z)| \,dz\right)^2\cfrac{dyds}{s^{\frac{n}{2}+1}}\nonumber\\
				&\lesssim  \int^{r_B^2}_0\left( \int_{(B^{*})^c}s^{-\frac{n}{2}}e^{-cA\frac{|z-x_B|^2}{4s}}|f_2(z)|\,dz \right)^2\cfrac{ds}{s}\\
                & \lesssim
   \left(\int_{(B^*)^c}\cfrac{r_B}{|z-x_B|^{n+1}} |f(z)-f_{B^*}|\,dz\right)^2,\nonumber
			\end{align}   
		where in the last inequality we have used \eqref{I21}.	
						
		Observe   the following geometric fact:  
\begin{align}\label{geometric fact}
	\left|B(x,\sqrt{s})\Delta B(x_B,\sqrt{s}) \right|\lesssim r_Bs^{\frac{n-1}{2}},\quad \text{for} \ \ |x-x_B|<r_B\leq \sqrt{s},
			\end{align}
which, together with \eqref{S:QL(y,z)}, implies that	\begin{align}\label{S:I_22}
				({\tilde I}_{22}(x))^2&\lesssim  \int_{r_B^2}^\infty \int_{B(x,\sqrt{s})\Delta B(x_B,\sqrt{s})} \left(\int_{(B^{*})^c} \frac{(\sqrt{s})^{1/4}}{(\sqrt{s}+|z-y|)^{n+\frac{1}{4}}} |f_2(z)|\,dz \right)^2\cfrac{dyds}{s^{\frac{n}{2}+1}}\nonumber\\
				&\lesssim  \int_{r_B^2}^\infty \left(\frac{r_B}{\sqrt{s}}\right) \left(\int_{(B^{*})^c} \frac{(\sqrt{s})^{1/4}}{(\sqrt{s}+|z-x_B|)^{n+\frac{1}{4}}}|f_2(z)|\,dz\right)^2\frac{ds}{s}\\
                &\lesssim \left(\int_{(B^*)^c} \frac{{r_B}^{1/4}}{|z-x_B|^{n+\frac{1}{4}}}|f(z)-f_{B^*}|\,dz\right)^2,\nonumber
			\end{align}
            where in the second inequality we  used the fact $\sqrt{s}+|z-x_B|\leq 3(\sqrt{s}+|z-y|)$.

\smallskip
                        
			We now  estimate $\tilde{I}_3(x)$ for $x\in B$.
		Since $x\in B$, we  have 		\begin{align*}
					\tilde{I}_3(x)&\lesssim |f_{B^*}|\left(\int^{r_B\rho(x_B)}_0\int_{B(x,\sqrt{s})}\left|Q_s^L(1)(y)\right|^2\cfrac{dyds}{s^{\frac{n}{2}+1}}\right)^{1/2}\\
&\quad+|f_{B^*}|\left(\int_{r_B\rho(x_B)}^\infty\int_{B(x,\sqrt{s})\Delta B(x_B,\sqrt{s})}\left|Q_s^L(1)(y)\right|^2\cfrac{dyds}{s^{\frac{n}{2}+1}}\right)^{1/2}\\
					&=: |f_{B^*}|\left(\tilde{I}_{31}(x)+\tilde{I}_{32}(x)\right).
				\end{align*}
                
                Note that if $y\in B(x,\sqrt{s})$, then  the facts $x\in B$  and $s<r_B\rho(x_B)<\rho(x_B)^2$ will imply that 
                $$
                |y-x_B|\leq |y-x|+|x-x_B|\leq \sqrt{s}+r_B\leq 2\rho(x_B),
                $$
                which, by Proposition \ref{p2}, yields  $\rho(y)\approx \rho(x_B)$.

Then, we apply (iii) of Proposition \ref{p1} to obtain  
				\begin{align*}
	(\tilde{I}_{31}(x))^2&\lesssim 
					\int^{r_B\rho(x_B)}_0\int_{B(x,\sqrt{s})}\left(\cfrac{\sqrt{s}}{\rho(y)}\right)^{2\delta}\cfrac{dyds}{s^{\frac{n}{2}+1}}       \lesssim 
					\int^{r_B\rho(x_B)}_0\left(\cfrac{\sqrt{s}}{\rho(x_B)}\right)^{2\delta}\cfrac{ds}{s}\lesssim \left(\cfrac{r_B}{\rho(x_B)}\right)^{\delta}.
				\end{align*}
				It follows from (i) of Proposition  \ref{p1} that $|Q_s^L(1)(y)|\lesssim 1$. This,  combined with \eqref{geometric fact}, implies that 
						\begin{align*}
					(\tilde{I}_{32}(x))^2&\lesssim \int_{r_B\rho(x_B)}^\infty|B(x,\sqrt{s})\Delta B(x_B,\sqrt{s})|\frac{ds}{s^{\frac{n}{2}+1}}
					\lesssim \int_{r_B\rho(x_B)}^\infty \cfrac{r_B}{\sqrt{s}}\,\cfrac{ds}{s}
					\lesssim \left(\cfrac{r_B}{\rho(x_B)}\right)^{1/2}.
				\end{align*}
Letting $\eta=\min\{\delta/2, 1/4\}$,  we have proved
\begin{align}\label{S:I_3}
\tilde{I}_3(x)\lesssim |f_{B^*}|\left(\frac{r_B}{\rho(x_B)}\right)^\eta.    
\end{align}
By combining \eqref{S:ee-I1}, \eqref{S:I_21},\eqref{S:I_22} and \eqref{S:I_3}, we have shown (i).

\bigskip

{\bf Proof of  (ii).} Assume that 
     $r_B\geq \rho(x_B)$. 
We decompose $
f=f\chi_{B^*}+f\chi_{(B^*)^c}$.  

It follows from  the $L^2$ boundedness of $S_L$ that
\begin{align}\label{S:ee4.0}
\left(\fint_{B}\left|S_L(f\chi_{B^*})(x)\right|^2\,dx\right)^{1/2}
\lesssim \left(\cfrac{1}{|B^*|}\int_{B^*}|f(z)|^2 dz\right)^{1/2}.
\end{align}

Next, we estimate  $S_L(f\chi_{(B^*)^c})(x)$ for $x\in B$.
Since $x,x_B\in B$ and $r_B\geq\rho(x_B)$,  Proposition \ref{p2} yields 
$$\rho(x)\lesssim \rho(x_B)+|x-x_B|\lesssim r_B.
$$   
Then, for $|y-x|<\sqrt{s}$, we have
\begin{align}\label{S:ee4.1}
    \left|Q_s^L(y,z)\right|&\leq \left|Q_s^L(y,z)-Q_s^L(x,z)\right|+\left|Q_s^L(x,z)\right|\nonumber\\
    &\lesssim s^{-\frac{n}{2}}e^{-A\frac{|x-z|^2}{s}}\left(1+\cfrac{\sqrt{s}}{\rho(x)}\right)^{-2}\lesssim s^{-\frac{n}{2}}e^{-A\frac{|x_B-z|^2}{4s}}\left(1+\cfrac{\sqrt{s}}{r_B}\right)^{-2},
\end{align}
where  the second inequality follows from (i) and (ii) of Proposition \ref{p1}.

By \eqref{S:ee4.1}, we can obtain that 
for $x\in B$
\begin{align*}
    S_L(f\chi_{(B^*)^c})(x) &\leq \left(\int_0^{\infty} \int_{B(x,\sqrt{s})}|Q_s^L(f\chi_{(B^*)^c})(y)|^2\, \frac{dyds}{s^{n/2+1}}\right)^{1/2}\\
    &\lesssim \left(\int_0^{\infty} \int_{B(x,\sqrt{s})}\left(\int_{(B^*)^c}s^{-\frac{n}{2}}e^{-A\frac{|x_B-z|^2}{4s}}\left(1+\cfrac{\sqrt{s}}{r_B}\right)^{-2} |f(z)|\,dz\right)^2\, \frac{dyds}{s^{n/2+1}}\right)^{1/2}\\
    &\lesssim \left(\int_0^{\infty} \left(\int_{(B^*)^c}s^{-\frac{n}{2}}e^{-A\frac{|x_B-z|^2}{4s}}\left(1+\cfrac{\sqrt{s}}{r_B}\right)^{-2} |f(z)|\,dz\right)^2\, \frac{ds}{s}\right)^{1/2},
\end{align*}
which, via a similar argument to \eqref{ee-g21}, implies that    
\begin{align}\label{S:ee4.2}
S_L(f\chi_{(B^*)^c})(x) 
\lesssim \int_{(B^{*})^{c}} \frac{r_B}{|x-y|^{n+1}}|f(y)|\,dy.
\end{align}

Combining \eqref{S:ee4.0} and \eqref{S:ee4.2}, we complete the proof of (ii).

\bigskip

\section{The mapping $T_L^*:\ {\rm CMO}_L \to {\rm CMO}_L$  }

 The proof framework   for $T_L^*$   parallels that  established  for  $s_L$ in Section 3, but two key differences arise. First, the $L^2$ norm $\left(\int_{0}^\infty |tLe^{-tL}(f)|^2 \frac{dt}{t}\right)^{1/2}$ is replaced by the supremum  $\sup_{t>0}|e^{-tL}(f)|$. Second, the function  $e^{-tL}(1)$ fails to  satisfy  (iii) of Proposition \ref{p1}.  In what follows, we outline the main points that require modification.

From \cite{LL2011AdvMath},  we have the boundedness $T^*_L:  {\rm BMO}_L \to {\rm BMO}_L$ with the estimate  $\|T^*_L(f)\|_{{\rm BMO}_L}\lesssim\|f\|_{{\rm BMO}_L}$. Consequently, for $f\in {\rm CMO}_L$,  to establish $T_L^*(f)\in {\rm CMO}_L$, it is enough, by Proposition \ref{CMOL-new characterization},  to prove that for any $\varepsilon>0$, there exist positive constants ${\sigma}'\ll 1$ and ${M}'\gg1$ satisfying  
\begin{align*}
	&\sup_{B:r_{B}\leq {\delta}'}\left(\fint_{B}|T^*_L(f)(x)-\left(T^*_L(f)\right)_{B}|^{2}\,dx\right)^{1/2}+
				\sup_{B:|x_B|\geq {M}'}\left(\fint_{B}|T^*_L(f)(x)-\left(T^*_L(f)\right)_{B}|^{2}\,dx\right)^{1/2}\\
                &\hskip 1cm+
				\sup_{B:|x_B|\geq {M}',\,r_{B}\geq\rho(x_{B})}\left(\fint_{B}|T^*_L(f)(x)|^{2}\,dx\right)^{1/2}\lesssim\varepsilon.
			\end{align*}

Let $B^*:=B(x_B,4r_B)$. By the same reasoning as in Section 3, it is enough to establish  the following two estimates.
		
 (i) If $r_B<\rho(x_B)$, then  there exist  constants $\alpha>0$, $\eta>0$, and $\hat{C}(B)$ (depending only on  $B$),  such that
\begin{align*}
\left(\fint_{B}\left|T^*_L(f)(x)-\hat{C}(B)\right| ^2\,dx\right)^{1/2}
&\lesssim \left(\cfrac{1}{|B^*|}\int_{B^*}|f(x)-f_{B^*}|^2 dx\right)^{1/2}+\big|f_{B^*}\big|\left(\frac{r_B}{\rho(x_B)}\right)^\eta\\
&\hskip 0.5cm+\int_{(B^*)^c} \frac{{r_B}^{\alpha}}{|x_B-z|^{n+\alpha}}|f(z)-f_{B^*}|\,dz.
\end{align*}

(ii) If $r_B\geq\rho(x_B)$,  then
\begin{align*}
\left(\fint_{B}\left|T^*_L(f)(x)\right|^2\,dx\right)^{1/2}\lesssim \left(\cfrac{1}{|B^*|}\int_{B^*}|f(x)|^2 dx\right)^{1/2}+\int_{(B^{*})^{c}} \frac{r_B}{|x_B-y|^{n+1}}|f(y)|\,dy.
\end{align*}

   \bigskip

{\bf Proof of (i).} \  Assume that     $r_B<\rho(x_B)$.
 We write  $f=f_1+f_2+f_3$, where each $f_i$ is given by \eqref{el}.		
Define $\hat{C}(B)$  as follows:
\begin{align*}
    \hat{C}(B):= \sup_{t>0}\left|e^{-tL}f_2(x_B)\chi_{[r_B^2,\infty)}(t)+f_3\chi_{(0,r_B\rho(x_B))}(t) +e^{-tL}(f_3)(x_B)\chi_{[r_B\rho(x_B),\infty)}(t)  \right|.
\end{align*}
It is worth noting that  $\hat{C}_B$ differs from  $C_B$ in \eqref{s:C(B)} in two ways: firstly, the $L^2$ norm is replaced by the supremum; secondly,  we introduce an additional term $f_3\chi_{(0,r_B\rho(x_B))}(t)$, which is due to the fact that $e^{-tL}(1)$ fails to  satisfy  (iii) of Proposition \ref{p1}.  

It is straightforward to check that $\hat{C}(B)<\infty$. On the one hand, from the proof of \eqref{s:C_1(B)} we get  
$$\sup_{t\geq r_B^2}\left|e^{-tL}f_2(x_B)\right|\lesssim \sup_{t\geq r_B^2} \left(\frac{\rho(x_B)}{\sqrt{t}}\right)^2 \left(\frac{\sqrt{t}}{r_B}\right)  \|f\|_{\rm BMO}\leq \left(\frac{\rho(x_B)}{r_B}\right)^2   \|f\|_{\rm BMO}<\infty.
$$
On the other hand,  Proposition \ref{6a1} yields, 
$$\sup_{0<t<r_B\rho(x_B)}|f_3|+\sup_{t\geq r_B\rho(x_B)}\left|e^{-tL}(f_3)(x_B)\right|\lesssim |f_{B^*}|<\infty.
$$

For any $x\in B$,  we apply Minkowski's inequality to obtain
\begin{align*}
    \left|T^*_L(f)(x)-\hat{C}_B \right| &\leq \sup_{t>0}\left|e^{-tL}f_1(x)\right|+\sup_{t>0}\left|e^{-tL}f_2(x)-e^{-tL}f_2(x_B)\chi_{[r_B^2,\infty]}\right|\\
    &\quad+\sup_{t>0}\left|e^{-tL}f_3(x)-f_3\chi_{(0,r_B\rho(x_B))}(t)-e^{-tL}(f_3)(x_B)\chi_{[r_B\rho(x_B),\infty)}(t)\right|\\
    &=:{J}_1(x)+{J}_2(x)+J_3(x).
\end{align*}

 We note that the operator $T^*_L$ is bounded on $L^2(\Rn)$; indeed, this  follows from  the pointwise estimate  $|T^*_Lf(x)|\lesssim {\mathcal M}f(x)$,  where ${\mathcal M}$ is the Hardy--Littlewood maximal operator. Consequently,
\begin{align}\label{ee-5-J1}
    \fint_B|J_1(x)|^2\,dx&\lesssim \frac{1}{|B|}\int_{\Rn}\left|f_1(x) \right|^2\,dx\lesssim \fint_{B^*}|f(x)-f_{B^*}|^2\,dx.
\end{align}

Consider $J_2(x)$.  One has
\begin{align*}
    J_2(x)&\leq \sup_{0<t<r_B^2}\left|e^{-tL}(f_2)(x)(t)\right|+\sup_{t\geq r_B^2}\left|e^{-tL}(f_2)(x)-e^{-tL}(f_2)(x_B)\right|\\
    &=:J_{21}(x)+J_{22}(x). 
\end{align*}

Observe that  $|y - x_B| \approx |x - y|$ for $x, x_B\in B$ and $y \in (B^*)^c$. This, together with  (i) from Proposition \ref{6a1}, implies
\begin{align}\label{ee-5-J21}
    J_{21}(x)&\lesssim \sup_{0<t<r_B^2}\int_{(B^*)^c}\frac{{\sqrt{t}}}{(\sqrt{t}+|x-y|)^{n+1}}|f(y)-f_{B^*}|\,dy\nonumber\\
    &\lesssim \sup_{0<t<r_B^2}\int_{(B^*)^c}\frac{\sqrt{t}}{|x_B-y|^{n+1}}|f(y)-f_{B^*}|\,dy\\
    &\lesssim \int_{(B^*)^c}\frac{r_B}{|x_B-y|^{n+1}}|f(y)-f_{B^*}|\,dy.\nonumber
\end{align}
For $t\geq r_B^2$,  since  $|x-x_B|<r_B\leq \sqrt{t}$, applying Proposition \ref{6a2} yields
\begin{align}\label{ee-5-J22}
    J_{22}(x)&\lesssim \sup_{r_B^2<t<\infty}\left(\frac{r_B}{\sqrt{t}}\right)^{\delta}\int_{(B^*)^c}\frac{(\sqrt{t})^\alpha}{(\sqrt{t}+|x_B-y|)^{n+\alpha}}|f(y)-f_{B^*}|\,dy\nonumber\\
    &\lesssim\sup_{r_B^2<t<\infty}\left(\frac{r_B}{\sqrt{t}}\right)^{\delta-\alpha}\int_{(B^*)^c}\frac{(r_B)^\alpha}{|x_B-y|^{n+\alpha}}|f(y)-f_{B^*}|\,dy\\
    &\lesssim \int_{(B^*)^c}\frac{(r_B)^\alpha}{|x_B-y|^{n+\alpha}}|f(y)-f_{B^*}|\,dy,    \nonumber
\end{align}
where we  choose $\alpha<\delta$.

Consider $J_3(x)$.  We have 
\begin{align*}
    J_3(x)&\leq \sup_{0<t<r_B\rho(x_B)}\left|e^{-tL}(f_{B^*})(x)-f_{B^*}\right|+\sup_{t\geq r_B\rho(x_B)}\left|e^{-tL}(f_B^*)(x)-e^{-tL}(f_{B^*})(x_B)\right|\\
    &=: J_{31}+J_{32}.
\end{align*}

\noindent For $J_{31}(x)$, we apply the fact that $e^{-t(-\Delta)}(1)=1$ and  Proposition \ref{6a3} to obtain 
\begin{align}\label{ee-5-J31}
    J_{31}&=\sup_{0<t<r_B\rho(x_B)}\left| e^{-tL}(f_{B^*})(x)-e^{-t(-\Delta)}(f_{B^*})(x) \right|\nonumber\\
    &\lesssim |f_{B^*}|\sup_{0<t<r_B\rho(x_B)} \left(\frac{\sqrt{t}}{\rho(x_B)}\right)^\delta\int_{\IR^n}\cfrac{{\sqrt{t}}}{(\sqrt{t}+|x-y|)^{n+1}}\,dy\\
    &\lesssim |f_{B^*}| \left(\frac{r_B}{\rho(x_B)}\right)^{\delta/2},\nonumber
\end{align}
where in the first inequality we used $\rho(x)\approx\rho(x_B)$ since $|x-x_B|<\rho(x_B)$.


Consider $J_{32}(x)$, for $x\in B$.  Since $|x-x_B|\leq r_B\leq \sqrt{t}$, we apply Proposition \ref{6a2} to obtain
\begin{align}\label{ee-5-J32}
    J_{32}(x)
    &\lesssim |f_{B^*}|\sup_{t\geq r_B\rho(x_B)}\left(\frac{|x-x_B|}{\sqrt{t}}\right)^{\delta}\int_{\IR^n}\frac{\sqrt{t}}{(\sqrt{t}+|x-y|)^{n+1}}\,dy\nonumber\\
    &\lesssim |f_{B^*}|\sup_{t\geq r_B\rho(x_B)}\left(\frac{|x-x_B|}{\sqrt{t}}\right)^{\delta}\lesssim |f_{B^*}|\left(\frac{r_B}{\rho(x_B}\right)^{\delta/2}.
\end{align}

By combining \eqref{ee-5-J1}-\eqref{ee-5-J32}, we have proved (i).

\bigskip

 {\bf Proof of (ii).} Assume that 
     $r_B\geq \rho(x_B)$. 
We decompose $
f=f\chi_{B^*}+f\chi_{(B^*)^c}$.  

It follows from  the $L^2$ boundedness of $T^*_L$ that
\begin{align}\label{S:ee5.0}
\left(\fint_{B}\left|T^*_L(f\chi_{B^*})(x)\right|^2 dx\right)^{1/2}
\lesssim \left(\cfrac{1}{|B^*|}\int_{B^*}|f(y)|^2 dy\right)^{1/2}.
\end{align}

For $x\in B$, we estimate the term  $T^*_L(f\chi_{(B^*)^c})(x)$ as

$$T^*_L(f\chi_{(B^*)^c})(x)\leq \sup_{0<t<r_B^2}|e^{-tL}(f\chi_{(B^*)^c})(x)| +\sup_{t\geq r_B^2}|e^{-tL}(f\chi_{(B^*)^c})(x)|.
$$

\noindent By Proposition \ref{6a1} and the fact that $|x_B-y|\approx |x-y|$ for $x,x_B\in B$ and $y\in (B^*)^c$, we have 
\begin{align}\label{ee-5-1}
   \sup_{0<t<r_B^2} \left|e^{-tL}(f\chi_{(B^*)^c})(x)\right|
      \lesssim \sup_{0<t<r_B^2} \int_{(B^*)^c}\frac{\sqrt{t}}{|x_B-y|^{n+1}}|f(y)|\,dy      
       \lesssim  \int_{(B^*)^c}\frac{r_B}{|x_B-y|^{n+1}}|f(y)|\,dy.
\end{align}

On the other hand, by an argument similar to that of \eqref{ee-3-r big},
	 we have
\begin{align*}
         \left|e^{-tL}(f\chi_{(B^*)^c})(x)\right|        \lesssim \left(\frac{r_B}{\sqrt{t}}\right) \int_{(B^*)^c}\frac{r_B}{|x_B-y|^{n+1}}|f(y)|\,dy,
     \end{align*}
which yields
\begin{align}\label{ee-5-2}
\sup_{t\geq r_B^2}  \left|e^{-tL}(f\chi_{(B^*)^c})(x)\right|\lesssim  \int_{(B^*)^c}\frac{r_B}{|x_B-y|^{n+1}}|f(y)|\,dy.
\end{align}
By combining \eqref{S:ee5.0}, \eqref{ee-5-1} and \eqref{ee-5-2}, we complete the proof of (ii).

\bigskip
\bigskip

\noindent {\bf Acknowledgments}   The research is supported by National Key R$\&$D Program of China\\  
2022YFA1005700.   L. Song  is supported by NNSF of China (Nos. 12471097 and 12371105).   
Q. Lin is supported by NNSF of China (12501126), Guangdong Basic and Applied Basic Research Foundation (2024A1515110227) and STU Scientific Research Initiation Grant (NTF24015T).

%
%
%
%
%
%
%

\bibliographystyle{plain}
\bibliography{ref} 

@article{BDS1981AnnMath,
	title={Weak {$L^\infty$} and {BMO}},
	author={Bennett, C. and DeVore, R.A. and Sharpley, R.},
	journal={Ann. of Math.},
	volume={113},
	number={3},
	pages={601--611},
	year={1981},
}

@article{CW1977BAMS,
	title        = {Extensions of Hardy spaces and their use in analysis},
	author       = { Coifman, R.R. and Weiss, G.},
	journal      = {Bull. Amer. Math. Soc.},
	volume       = {83},
	number       = {4},
	pages        = {569--646},
	year         = {1977},
}

@article {DDSTY2008MichiganMathMathJ,
    AUTHOR = {Deng, D.G. and Duong, X.T. and Song, L. and Tan,          C.Q. and Yan, L.X.},
     TITLE = {Functions of vanishing mean oscillation associated with
              operators and applications},
   JOURNAL = {Michigan Math. J.},
    VOLUME = {56},
      YEAR = {2008},
    NUMBER = {3},
     PAGES = {529--550},
      ISSN = {0026-2285,1945-2365},
   MRCLASS = {47D06 (42B25 46E15 47A60 47B38 47F05)},
  MRNUMBER = {2488724},
MRREVIEWER = {Galia\ D.\ Dafni},
}

@article {DGMTZ2005MathZ,
	AUTHOR = {Dziuba{\'n}ski, J. and Garrig{\'o}s, G. and Mart{\'\i}nez, T. and Torrea, J.L. and Zienkiewicz, J.},
	TITLE = {{$BMO$} spaces related to {S}chr\"odinger operators with
		potentials satisfying a reverse {H}\"older inequality},
	JOURNAL = {Math. Z.},
	FJOURNAL = {Mathematische Zeitschrift},
	VOLUME = {249},
	YEAR = {2005},
	NUMBER = {2},
	PAGES = {329--356},
	ISSN = {0025-5874,1432-1823},
	MRCLASS = {35J10 (42B30 42B35)},
	MRNUMBER = {2115447},
	MRREVIEWER = {Takayoshi\ Ogawa},
}

@article {DZ1997StusiaMath,
    AUTHOR = {Dziuba\'nski, J. and  Zienkiewicz, J.},
     TITLE = {Hardy spaces associated with some {S}chr\"odinger operators},
   JOURNAL = {Studia Math.},
    VOLUME = {126},
      YEAR = {1997},
    NUMBER = {2},
     PAGES = {149--160},
      ISSN = {0039-3223,1730-6337},
   MRCLASS = {42B30 (35J10 46E15 47D06)},
  MRNUMBER = {1472695},
MRREVIEWER = {Norman\ J.\ Weiss},
      
}

@article {DZ2003ColloqMath,
    AUTHOR = {Dziuba\'nski, J. and Zienkiewicz, J.},
     TITLE = {{$H^p$} spaces associated with {S}chr\"odinger operators with
              potentials from reverse {H}\"older classes},
   JOURNAL = {Colloq. Math.},
    VOLUME = {98},
      YEAR = {2003},
    NUMBER = {1},
     PAGES = {5--38},
      ISSN = {0010-1354,1730-6302},
   MRCLASS = {42B30 (35J10 47D06 47D08)},
  MRNUMBER = {2032068},
MRREVIEWER = {Ferenc\ Weisz},
}

@article {HLW2025NYJ,
    AUTHOR = {Han, X.T. and Li, J. and Wu, L.C.},
     TITLE = {Endpoint boundedness of singular integrals: CMO space associated to Schr\"odinger operators},
   JOURNAL = {New York J. Math.},
    VOLUME = {31},
      YEAR = {2025},
    NUMBER = {},
     PAGES = {1345--1380},
      ISSN = {1076-9803},
   MRCLASS = {42B20 (42B25 42B35)},
  MRNUMBER = {4960628},
MRREVIEWER = {},
}

@article{FS1972ActaMath,
	title        = {{$H^p$} Spaces of Several Variables},
	author       = {Fefferman, C. and Stein, E.M.},
	journal      = {Acta Math.},
 VOLUME = {129},
      YEAR = {1972},
    NUMBER = {3-4},
     PAGES = {137--193},
}

@article{Goldberg1979DukeMathJ,
	title={A local version of real {H}ardy spaces},
	author={Goldberg, D.M.},
	journal={Duke Math. J.},
	VOLUME = {46},
      YEAR = {1979},
    NUMBER = {1},
     PAGES = {27--42},
}

@article{JN1961CPAM,
	title={On functions of bounded mean oscillation},
	author={John, F. and Nirenberg, L.},
	journal={Comm. Pure Appl. Math.},
	volume={14},
	number={3},
	pages={415--426},
	year={1961},
	publisher={Wiley Online Library}
}

@article {Kurtz1987ProcAmerMathSoc,
    AUTHOR = {Kurtz, D.S.},
     TITLE = {Littlewood--{P}aley operators on {BMO}},
   JOURNAL = {Proc. Amer. Math. Soc.},
    VOLUME = {99},
      YEAR = {1987},
    NUMBER = {4},
     PAGES = {657--666},
      ISSN = {0002-9939,1088-6826},
   MRCLASS = {42B25 (42B20)},
  MRNUMBER = {877035},
MRREVIEWER = {Jos\'e\ L.\ Rubio de Francia},
     
}

@article {Ky2013PotentialAnal,
    AUTHOR = {Ky, L.D.},
     TITLE = {On {${\rm weak}^*$}-convergence in {$H^1_L(\Bbb{R}^d)$}},
   JOURNAL = {Potential Anal.},
    VOLUME = {39},
      YEAR = {2013},
    NUMBER = {4},
     PAGES = {355--368},
      ISSN = {0926-2601,1572-929X},
   MRCLASS = {42B35 (46E15)},
  MRNUMBER = {3116053},
MRREVIEWER = {Richard\ D.\ Carmichael},
}

@article{LL2011AdvMath,
	title = {{$BMO_L({\mathbb H}^n)$} spaces and {Carleson} measures for {Schr\"odinger} operators},
	journal = {Adv. Math.},
	volume = {228},
	number = {3},
	pages = {1631-1688},
	year = {2011},
	issn = {0001-8708},
	author = {Lin, C.C. and Liu, H.P.},
}

@article{LLS2025TokyoJMAth,
	author = {Lin, Q.Z. and Liu, H.S. and Song, L.},
	title = {The Behavior of Hardy--Littlewood Maximal Operator and Littlewood--Paley Operators on CMO Space},
	journal      = {Tokyo J. Math.},
	volume       = {48},
	number       = {1},
	pages        = {193--216},
	year       = {2025},
   }

@article {Shen1994IndianaUnivMathJ,
    AUTHOR = {Shen, Z.W.},
     TITLE = {On the {N}eumann problem for {S}chr\"odinger operators in
              {L}ipschitz domains},
   JOURNAL = {Indiana Univ. Math. J.},
  FJOURNAL = {Indiana University Mathematics Journal},
    VOLUME = {43},
      YEAR = {1994},
    NUMBER = {1},
     PAGES = {143--176},
      ISSN = {0022-2518,1943-5258},
   MRCLASS = {35J10},
  MRNUMBER = {1275456},
MRREVIEWER = {Karl-Josef\ Witsch},
}

@article {Shen1995AnnInstFourier,
    AUTHOR = {Shen, Z.W.},
     TITLE = {{$L^p$} estimates for {S}chr\"odinger operators with certain
              potentials},
   JOURNAL = {Ann. Inst. Fourier (Grenoble)},
  FJOURNAL = {Universit\'e{} de Grenoble. Annales de l'Institut Fourier},
    VOLUME = {45},
      YEAR = {1995},
    NUMBER = {2},
     PAGES = {513--546},
      ISSN = {0373-0956,1777-5310},
   MRCLASS = {35J10 (42B20)},
  MRNUMBER = {1343560},
MRREVIEWER = {V.\ S.\ Rabinovich},
       DOI = {10.5802/aif.1463},
       URL = {https://doi.org/10.5802/aif.1463},
}

@article{SW2022JGeomAnal,
	title={The {CMO--Dirichlet} Problem for the {Schr\"odinger} Equation in the {upper half-space and characterizations of CMO}},
	volume={32},
	ISSN={1559-002X},
	number={4},
	journal={J. Geom. Anal.},
	publisher={Springer Science and Business Media LLC},
pages={Paper No. 130, 37 pp.},
	author={Song, L. and Wu, L.C.},
	year={2022}, }

@incollection{DZ2002Book,
    author = {Dziuba{\'n}ski, J. and Zienkiewicz, J.},
    title ={{$H^p$} spaces for Schr\"odinger operators} ,
    booktitle ={Fourier Analysis and Related Topics} ,
    publisher ={ Banach Center Publications} ,
    year = {2002},
}

@book{Stein1970PrincetonUniversityPress,
  title={Singular integrals and differentiability properties of functions},
  author={Stein, E.M.},
   year={1970},
  publisher={No. 30. Princeton university press}
}

@book {Stein1993book,
    AUTHOR = {Stein, E.M.},
     TITLE = {Harmonic analysis: real-variable methods, orthogonality, and
              oscillatory integrals},
    VOLUME = {43},
 PUBLISHER = {Princeton University Press, Princeton, NJ},
      YEAR = {1993},
      ISBN = {0-691-03216-5},
   MRCLASS = {42-02 (35Sxx 43-02 47G30)},
  MRNUMBER = {1232192},
MRREVIEWER = {Michael\ Cowling},
}

@article {SY2016AdvMath,
    AUTHOR = {Song, L. and Yan, L.X.},
     TITLE = {A maximal function characterization for {H}ardy spaces
              associated to nonnegative self-adjoint operators satisfying
              {G}aussian estimates},
   JOURNAL = {Adv. Math.},
  FJOURNAL = {Advances in Mathematics},
    VOLUME = {287},
      YEAR = {2016},
     PAGES = {463--484},
      ISSN = {0001-8708,1090-2082},
   MRCLASS = {42B30 (42B35 47B38)},
  MRNUMBER = {3422683},
MRREVIEWER = {Yiyu\ Liang},
       DOI = {10.1016/j.aim.2015.09.026},
       URL = {https://doi.org/10.1016/j.aim.2015.09.026},
}

@article {AMR2008JGA,
    AUTHOR = {Auscher, P. and McIntosh, A. and Russ, E.},
     TITLE = {Hardy spaces of differential forms on {R}iemannian manifolds},
   JOURNAL = {J. Geom. Anal.},
  FJOURNAL = {Journal of Geometric Analysis},
    VOLUME = {18},
      YEAR = {2008},
    NUMBER = {1},
     PAGES = {192--248},
      ISSN = {1050-6926,1559-002X},
   MRCLASS = {42B30 (47A60 58A10)},
  MRNUMBER = {2365673},
MRREVIEWER = {Gilles\ Carron},
       DOI = {10.1007/s12220-007-9003-x},
       URL = {https://doi.org/10.1007/s12220-007-9003-x},
}

@article {LY1995ApproxTheoryAppl,
    AUTHOR = {Lu, S.Z. and Yang, D.C. },
     TITLE = {The central {BMO} spaces and {L}ittlewood-{P}aley operators},
   JOURNAL = {Approx. Theory Appl. (N.S.)},
  FJOURNAL = {Approximation Theory and its Applications. New Series},
    VOLUME = {11},
      YEAR = {1995},
    NUMBER = {3},
     PAGES = {72--94},
      ISSN = {1000-9221},
   MRCLASS = {42B25 (41A35 46E30)},
  MRNUMBER = {1370776},
MRREVIEWER = {James\ E.\ Daly},
}

@article {Sun2004NagoyaMathJ,
    AUTHOR = {Sun, Y.Z.},
     TITLE = {On the existence and boundedness of square function operators
              on {C}ampanato spaces},
   JOURNAL = {Nagoya Math. J.},
  FJOURNAL = {Nagoya Mathematical Journal},
    VOLUME = {173},
      YEAR = {2004},
     PAGES = {139--151},
      ISSN = {0027-7630,2152-6842},
   MRCLASS = {42B25 (42B30)},
  MRNUMBER = {2041758},
MRREVIEWER = {Paolo\ Boggiatto},
       DOI = {10.1017/S0027763000008746},
       URL = {https://doi.org/10.1017/S0027763000008746},
}

@article {WC1990ChineseAnnMath,
    AUTHOR = {Wang, S.L. and Chen, J.C.},
     TITLE = {Some notes on square function operators},
   JOURNAL = {(Chinese) Chinese Ann. Math. Ser. A},
  FJOURNAL = {Chinese Annals of Mathematics. Series A. Shuxue Niankan. A Ji},
    VOLUME = {11},
      YEAR = {1990},
    NUMBER = {5},
     PAGES = {630--638},
      ISSN = {1000-8314},
   MRCLASS = {42B20 (47G10)},
  MRNUMBER = {1092055},
MRREVIEWER = {Weiyi\ Su},
}

@article {DL2013JFA,
    AUTHOR = {Duong, X.T. and Li, J.},
     TITLE = {Hardy spaces associated to operators satisfying
              {D}avies-{G}affney estimates and bounded holomorphic
              functional calculus},
   JOURNAL = {J. Funct. Anal.},
  FJOURNAL = {Journal of Functional Analysis},
    VOLUME = {264},
      YEAR = {2013},
    NUMBER = {6},
     PAGES = {1409--1437},
      ISSN = {0022-1236,1096-0783},
   MRCLASS = {42B30 (42C15)},
  MRNUMBER = {3017269},
MRREVIEWER = {Oscar\ Blasco},
       DOI = {10.1016/j.jfa.2013.01.006},
       URL = {https://doi.org/10.1016/j.jfa.2013.01.006},
}

@incollection {Wang1985book,
    AUTHOR = {Wang, S.L.},
     TITLE = {Some properties of the {L}ittlewood-{P}aley {$g$}-function},
 BOOKTITLE = {Classical real analysis ({M}adison, {W}is., 1982)},
    SERIES = {Contemp. Math.},
    VOLUME = {42},
     PAGES = {191--202},
 PUBLISHER = {Amer. Math. Soc., Providence, RI},
      YEAR = {1985},
      ISBN = {0-8218-5045-8},
   MRCLASS = {42B25},
  MRNUMBER = {807991},
MRREVIEWER = {Douglas\ Kurtz},
       DOI = {10.1090/conm/042/807991},
       URL = {https://doi.org/10.1090/conm/042/807991},
}

@article{DY2005CPAM,
  author  = {Duong, X.T. and Yan, L.X.},
  title   = {New function spaces of {BMO} type, the {John--Nirenberg} inequality, interpolation, and applications},
  journal = {Comm. Pure Appl. Math. },
  volume  = {58},
  year    = {2005},
  number  = {10},
  pages   = {1375--1420}
}

@article{DY2005JAMS,
  author  = {Duong, X.T. and Yan, L.X.},
  title   = {Duality of {Hardy} and {BMO} spaces associated with operators with heat kernel bounds},
  journal = {J. Amer. Math. Soc.},
  volume  = {18},
  year    = {2005},
  number  = {4},
  pages   = {943--973}
}

@book{HLMMY2011MAMS,
  author    = {Hofmann, S. and Lu, G.Z. and Mitrea, D. and Mitrea, M. and Yan, L.X.},
  title     = {Hardy spaces associated to non-negative self-adjoint operators satisfying {Davies--Gaffney} estimates},
  series    = {Mem. Amer. Math. Soc.},
  volume    = {214},
YEAR = {2011},
    NUMBER = {1007},
     PAGES = {vi+78},
}

@article {Neri1975Studiamath,
    AUTHOR = {Neri, U.},
     TITLE = {Fractional integration on the space {$H\sp{1}$} and its dual},
   JOURNAL = {Studia Math.},
  FJOURNAL = {Polska Akademia Nauk. Instytut Matematyczny. Studia
              Mathematica},
    VOLUME = {53},
      YEAR = {1975},
    NUMBER = {2},
     PAGES = {175--189},
      ISSN = {0039-3223,1730-6337},
   MRCLASS = {46E35},
  MRNUMBER = {388074},
MRREVIEWER = {H.\ Triebel},
}

@article{HM2009MathAnn,
  author  = {Hofmann, S. and Mayboroda, S.},
  title   = {Hardy and {BMO} spaces associated to divergence form elliptic operators},
  journal = {Mathematische Annalen},
  volume  = {344},
  year    = {2009},
  number  = {1},
  pages   = {37--116}
}

@book{Auscher2007MAMS,
  author    = {Auscher, P.},
  title     = {On necessary and sufficient conditions for {$L^p$}-estimates of {Riesz} transforms associated to elliptic operators on {$\mathbb{R}^n$} and related estimates},
  series    = {Memoirs of the American Mathematical Society},
  volume    = {186},
  number    = {871},
  year      = {2007},
  pages     = {xviii+75}
}

@article {Uchiyama1978TohokuMathJ,
    AUTHOR = {Uchiyama, A.},
     TITLE = {On the compactness of operators of {H}ankel type},
   JOURNAL = {Tohoku Math. J. (2)},
  FJOURNAL = {The Tohoku Mathematical Journal. Second Series},
    VOLUME = {30},
      YEAR = {1978},
    NUMBER = {1},
     PAGES = {163--171},
      ISSN = {0040-8735,2186-585X},
   MRCLASS = {47B37 (44A25)},
  MRNUMBER = {467384},
MRREVIEWER = {Richard\ Rochberg},
}

\end{document}